\newtheorem{Proposition}{Proposition}[section]
\newtheorem{Lemma}[Proposition]{Lemma}
\newtheorem{Corollary}[Proposition]{Corollary}
\theoremstyle{definition}
\newtheorem{Definition}[Proposition]{Definition}
\newtheorem{Remark}[Proposition]{Remark}
\crefname{lstlisting}{listing}{listings}
\Crefname{lstlisting}{Listing}{Listings}
\crefname{equation}{equation}{equations}
\crefname{figure}{figure}{figures}
\crefname{Definition}{definition}{definitions}
\crefname{Proposition}{proposition}{propositions}
\tikzset{%
  external/system call={pdflatex \tikzexternalcheckshellescape --halt-on-error --interaction=batchmode --output-directory=./build/ --jobname "\image" "\texsource"},
  /pgf/images/include external/.code={%
    \includegraphics{build/#1}%
  },
}
\definecolor{lightgrey}{rgb}{0.93,0.93,0.93}
\definecolor{verylightgrey}{rgb}{0.965,0.965,0.965}
\definecolor{lightblue}{rgb}{0,0.4,0.93}
\definecolor{darkpurple}{rgb}{0.53,0,0.83}
      \definecolor{COL1}{HTML}{525564}
      \definecolor{COL2}{HTML}{74828F}
      \definecolor{COL3}{HTML}{96C0CE}
      \definecolor{COL4}{HTML}{BEB9B5}
      \definecolor{COL5}{HTML}{C25B56}
      \definecolor{COL6}{HTML}{FEF6EB}
      \definecolor{COL7}{HTML}{D3D3D3}
      \definecolor{COL8}{HTML}{808080}
        \newdimen\LineSpace
        \tikzset{
           line space/.code={\LineSpace=#1}, %
           line space=5pt%
        }
 \def\lst@PlaceNumber{\makebox[\dimexpr 1em+\lst@numbersep][r]{\normalfont
 \lst@numberstyle{\thelstnumber}}\hspace{1em}}%
\def\lst@PlaceNumber{\makebox[\dimexpr 1em+\lst@numbersep][r]{\normalfont
\lst@numberstyle{\thelstnumber}}\hspace{1em}}%
\def\BState{\State\hskip-\ALG@thistlm}
  \DeclareMathOperator*{\divergence}{div}
  \DeclareMathOperator*{\image}{im}
  \newcommand{\constant}{\ensuremath{c}}
  \newcommand{\robin}{\ensuremath{\beta_r}}
  \newcommand{\consistencyerror}{\ensuremath{\varepsilon}}
  \newcommand{\boundary}{\ensuremath{\Gamma}}
  \newcommand{\boundarydisplacements}{\ensuremath{\mbf{{\bar u}}}}
  \newcommand{\continuousfunctions}{\ensuremath{C}}
  \newcommand{\damageRHS}{\ensuremath{r}}
  \newcommand{\damage}{\ensuremath{d}}
  \newcommand{\damageconstant}{\ensuremath{\omega}}
  \newcommand{\damagefunctions}{\ensuremath{\mathcal{D}}}
  \newcommand{\damageprocess}{\ensuremath{g}}
  \newcommand{\damageprocesses}{\ensuremath{\mathcal{G}}}
  \newcommand{\dimension}{\ensuremath{N}}
  \newcommand{\displacements}{\ensuremath{\mbf{u}}}
  \newcommand{\displacementdomain}[1]{%
  \ifnum\pdfstrcmp{#1}{}=0 %
    \ensuremath{\mathcal{V}} %
  \else %
    \ensuremath{\mathcal{V}_{#1}} %
  \fi %
  }
  \newcommand{\integrablefunctions}{\ensuremath{L}}
  \newcommand{\forces}{\ensuremath{\mbf{f}}}
    \newcommand{\measure}[1]{%
    \ifnum\pdfstrcmp{#1}{H}=0 %
      \ensuremath{\mu}%
    \else %
      \ensuremath{\lambda} %
    \fi %
  }
  \newcommand{\mollifiedgradient}{\ensuremath{\nabla^\mu}}
  \newcommand{\normals}{\ensuremath{\mbf{\nu}}}
  \newcommand{\sobolevfunctions}{\ensuremath{W}}
  \newcommand{\nemytskiioperator}{\ensuremath{G}}
  \newcommand{\spatialhilbertfunctions}{\ensuremath{H}}
  \newcommand{\spatialfunctions}{\ensuremath{V}}
  \newcommand{\spatialdomain}{\ensuremath{\Omega}}
  \newcommand{\spatialvariable}{\ensuremath{x}}
  \newcommand{\spatialvariables}{\ensuremath{\mbf{x}}}
  \newcommand{\strains}{\ensuremath{\mbf{\varepsilon}}}
  \newcommand{\stresses}{\ensuremath{\mbf{\sigma}}}
  \newcommand{\stressforce}{\ensuremath{\tau}}
  \newcommand{\stressforces}{\ensuremath{\mbf{\tau}}}
  \newcommand{\timevariable}{\ensuremath{t}}
  \newcommand{\timedomain}{\ensuremath{S}}
  \newcommand{\timedomainmax}{\ensuremath{T}}
  \newcommand{\discretizationInTime}{\ensuremath{s}}
  \newcommand{\discretizationInSpace}{\ensuremath{h}}
  \newcommand{\trace}{\ensuremath{\tn{tr}\,}}
  \newcommand{\tractiondrivendisplacements}{\ensuremath{\mbf{v}}}
  \newcommand{\elasticitytensor}{\ensuremath{\mbf{\mathbb{E}}\ \!}}
  \newcommand{\transpose}{\ensuremath{\intercal}}
  \newcommand{\pd}[3]{%
    \ifnum\pdfstrcmp{#3}{1}=0 %
      \ensuremath{\frac{\partial #1}{\partial #2}}%
    \else %
      \ensuremath{\frac{\partial^#3 #1}{\partial #2^#3}} %
    \fi %
  }
  \newcommand{\mbb}[1]{\ensuremath{\mathbb{#1}}}
  \newcommand{\td}[3]{%
    \ifnum\pdfstrcmp{#3}{1}=0 %
      \ensuremath{\frac{d #1}{d #2}}%
    \else %
      \ensuremath{\frac{d^#3 #1}{d #2^#3}} %
    \fi %
  }
  \newcommand{\tn}[1]{\textnormal{#1}}
  \newcommand{\mbf}[1]{\ensuremath{\boldsymbol{#1}}}
  \newcommand{\fa}{\forall\ \!}
\renewcommand{\phi}{\varphi}
\renewcommand{\epsilon}{\varepsilon}
\begin{document}

\newcolumntype{Y}{>{\centering\arraybackslash}X}
\newcommand{\Cpp}{\texttt{c++}}
\newcommand{\Doxygen}{Doxygen\textsuperscript{\textcopyright}}
\newcommand{\FEniCS}{FEniCS\textsuperscript{\textcopyright}}
\newcommand{\ParaView}{ParaView\textsuperscript{\textcopyright}}
\newcommand{\Gmsh}{Gmsh\textsuperscript{\textcopyright}}
\label{page:t}
\thispagestyle{plain}
\title{{I}DENTIFYING {P}ROCESSES {G}OVERNING {D}AMAGE {E}VOLUTION IN {Q}UASI-{S}TATIC {E}LASTICITY\\{P}ART 2 - {N}UMERICAL {S}IMULATIONS}


\renewcommand{\shorttitle}{Damage Process Identification in Quasi-Static Elasticity}
\renewcommand{\headeright}{}
\author{ Simon Gr\"utzner \\
	Center for Industrial Mathematics\\
	University of Bremen\\
	Germany \\
	\texttt{simon.gruetzner@uni-bremen.de} \\
	\And
	Adrian Muntean\\
	Department of Mathematics and Computer Science\\
	University of Karlstad\\
	Sweden \\
	\texttt{adrian.muntean@kau.se}\\
}

\maketitle
\noindent
{\bf Abstract.}
We investigate numerically a quasi-static elasticity system of Kachanov-type.
To do so we propose an Euler time discretization combined with a suitable finite elements scheme (FEM) to handle the discretization is space.
We use ODE-type arguments to prove the consistency of the scheme as well as its convergence rate.
We rely on the computational platform \FEniCS\ to perform the FEM discretizations in space needed to compute the model output.
The simulation results show a good agreement with both the physics of the problem and with our previous qualitative mathematical analysis results obtained for precisely the same problem setting.
Furthermore, our implementation recovers nicely the theoretically expected convergence rate.
This is a preliminary study preparing the framework for the rigorous numerical identification of the damage process in Kachanov-type models.

\section{Introduction}\label{sec:1}
Due to recent technological developments, there is a rapidly increasing  interest in getting grip on computable quantitative indicators of mechanical damage in real materials.
The difficulty of the topic is considerable if one has in mind that damage cannot be directly measured and, on top of this,  materials are increasingly complex due to their composite structure and built-in functionalities.
Having in mind the  durability (i.e. the large time behavior) of materials and involved maintenance costs, significant efforts are paid continuously to identify damage properties.
Seen from this perspective, our study wants to contribute using the classical framework of a Kachanov-type model (see e.g. \cite{Ka86}) which is in line with the series of sophisticated damage models developed by the engineers in the last approximately 20 years.\\[2ex]
Besides any practical consideration, our interest in the model lies in its interesting mathematical structure (a nonlinear coupling between elliptic and ordinary differential equations).
We refer the reader to a couple of relevant computational approaches like \cite{DA12,NP98,LP04,HL16,OKM19,LW21} as well as to \cite{LiLi05} for a recent review.
In \cite{GM18}, we presented a gentle introduction to the topic and we listed our philosophy concerning damage identification via Kachanov-type models.
The mathematical analysis of the forward problem, as formulated in Section \ref{sec:2}, and the main ingredients for handling the inverse problem have been considered in \cite{GM21}.
We are considering here only the elliptic case, but it is worth noting that a discussion of a hyperbolic scenario, very much inspired by our problem setting, is done in \cite{GeGr20}.
We stress the fact that handling inverse questions for our problem is particularly challenging because of the nonlinearity arising in the structure of the differential operator (a typical feature of the Kachanov setting).\\[2ex]
The main objective of this paper is to illustrate numerically the behavior of our model in physically relevant parameter ranges.
We use an Euler-type discretization in time combined with a FEM discretization in space with controlled consistency and convergence properties.
Our numerical results confirm the theoretically expected behavior of solutions. \\[2ex]
The structure of the paper is organized in the following fashion: Section \ref{sec:2} contains the list of our model equations.
As our problem is mathematically well-posed (cf. Theorem 3.1 in \cite{GM21}), we perform a basic numerical analysis of the employed time discretization in Section \ref{sec:numerical_analysis}.
In Section \ref{Sec:Corners} we discuss the possibility that singularities arise due to an eventual mismatch of boundary conditions and indicate practical options to avoid the occurrence of such unwanted effects.
After discussing implementation matters in \FEniCS, we illustrate in Section \ref{sec:3} the typical numerical output our model can deliver.
We refer to \cite{LoMa2012a} for an extensive introduction to this open-source \Cpp\ library.
Note that all meshes were generated by \Gmsh, all images showing spatial distributions of the solution were generated by \ParaView.
It is worth noting that the convergence rates of our numerical scheme matches the theoretical {\em a priori} convergence rates.
Using numerical simulations, we point out  concrete scenarios when corner-type singularities may arise and indicate possible remedies.
The paper closes with a brief conclusion section.

\section{Forward Problem. Setting of model equations.}\label{sec:2}
Our aim is to simulate numerically the following Kachanov-type system, which we will refer to as the \emph{quasi-static problem of linear elasticity for damaged, isotropic continua}.
The reader can find in \cite{GM21} mathematical analysis investigations of the weak solvability of this problem as well as  some of the features needed for the investigation of inverse-type questions.
Within this framework, the focus is on a more pragmatic side.
We wish to provide suitable discretizations for our model which are easily implementable in \FEniCS.
Our system reads as follows.
\begin{framed}
\begin{Definition}[Forward Problem]
  \label{Definition:Forward Problem}
  \begin{subequations}\label{Equations:Forward Problem}
    \begin{alignat}{2}\label{equation:effective stresses}
      \stresses(\timevariable,\spatialvariables) %
      &= \big(1-\damage(\timevariable,\spatialvariables)\big)\Big(\lambda(\spatialvariables)\,\trace\!\big(\strains\big(\displacements(\timevariable,\spatialvariables)\big)\big)I+2\mu(\spatialvariables)\strains\big(\displacements(\timevariable,\spatialvariables)\big)\big)\Big), &\quad \text{in }&S\times\spatialdomain, \\ \label{equation:momentum equation}
      -\divergence{\big(\stresses(\timevariable,\spatialvariables)\big)} %
      &= \forces(\timevariable,\spatialvariables) ,  &\quad  \text{in }&\timedomain\times\spatialdomain, \\ \label{damageevolution}
      \damage'(\timevariable,\spatialvariables)&= \big(1-\damage(\timevariable,\spatialvariables)\big)^{-\alpha}\damageprocess(\timevariable,\spatialvariable,\mollifiedgradient\displacements(\timevariable,\spatialvariables)), &\quad\text{in } & \timedomain\times\spatialdomain \\
      \displacements(\timevariable,\spatialvariables) 	&= \boldsymbol{0},  &\quad \text{on }&\timedomain\times\boundary_0,	 \\
      \stresses(\timevariable,\spatialvariables)\normals(\timevariable,\spatialvariables)	&= \mbf{0},  &\quad\text{on }&\timedomain\times\boundary_1, \\
      \stresses(\timevariable,\spatialvariables)\normals(\timevariable,\spatialvariables) 	&= \stressforces(\timevariable,\spatialvariables), &\quad\text{on } &\timedomain\times\boundary_2, \\
      \displacements(0,\spatialvariables) &= \displacements_0(\spatialvariables), &\quad\text{in } &\spatialdomain,\\ \label{initial_damage}
      \damage(0,\spatialvariables) &= \damage_0(\spatialvariables), &\quad\text{in } &\spatialdomain.
    \end{alignat}
  \end{subequations}
\end{Definition}
\end{framed}
Besides the time interval $\timedomain:=(0,\timedomainmax)\subset\mathbb{R}$ with $0<\timedomainmax<\infty$ and the spatial domain $\spatialdomain\subset\mathbb{R}^2$, in \Cref{Definition:Forward Problem}, $\stresses$ denotes the Cauchy stresses in the reference configuration expressed via the \emph{effective stresses} derived in the previous section, $\damage$ the damage variable, $\lambda,\mu$ the Lam\'e coefficients, $\strains$ the linearized Cauchy-Green strain tensor, $\forces$ the volumetric forces, $\damageprocess$ an expression for the effective stresses which is in particular dependent on the mollified gradient of the displacments $\mollifiedgradient\displacements$.
The objects $\normals$ and $\stressforces$ are representing the outer normal on $\partial\spatialdomain$ and $\stressforces$ the stresses acting on $\boundary_2$, a part of the boundary of $\spatialdomain$, respectively.
\subsection{Analytical setting}
The functional analytical setting has been introduced and well investigated in part one of this work (see \cite{GM21}).
To keep presentation concise, we will only repeat the basic functional spaces from that part.\\[2ex]
$\sobolevfunctions^{k,p}(\spatialdomain)$ denotes the specified Sobolev space of $k$-times weakly differentiable p-integrable functions.
We denote by $\spatialhilbertfunctions^k(\spatialdomain):=\sobolevfunctions^{k,2}(\spatialdomain)$ and $\integrablefunctions^p(\spatialdomain):=\sobolevfunctions^{0,p}(\spatialdomain)$ square integrable Sobolev and Lebesgue spaces, respectively.
We introduce basic function spaces for damage evolution and start with
\begin{equation}\label{eq:damage_functions}
    \damagefunctions:=\left\{%
    \damage\in\sobolevfunctions^{1,\infty}\!\left(\timedomain;\, \integrablefunctions^\infty(\spatialdomain)\right);\ %
      0\le\damage(\timevariable,\spatialvariables)\le\damageconstant_1 %
    \text{ a.e. in } \timedomain\times\spatialdomain \right\}
\end{equation}
where $\damageconstant_1\in\mbb{R}$ denotes a fixed non-negative constant such that $0\le\damageconstant_1 <1$ holds. We name its elements \emph{damage functions}.
We refer to elements of %
\begin{equation}\label{eq:initial_damage}
  \damagefunctions_0:= %
  \{\damage_0\in L^\infty(\spatialdomain);\, %
  0\le\damage_0(\spatialvariables)\le\damageconstant_0 %
  \text{ a.e. in }\spatialdomain\}
\end{equation}
as \emph{initial damage} and expect the constant $\damageconstant_0$ to suffice $0\le\damageconstant_0\le\damageconstant_1$.
This is to allow for non-zero initial damage as well.
Let $\overline{Y}:=\overline{\mathbb{B}}(0;\overline{y})\subset\mbb{R}^{\dimension^2}$ be the closed ball of radius $\bar y>0$ and center placed in $0$.
Furthermore, we introduce the set of admissible \emph{damage processes}
\begin{equation}\label{eq:damage_sources}
  \begin{aligned}
    \damageprocesses:=\Big\{\damageprocess %
    &\in  \integrablefunctions^\infty\Big(\timedomain; \integrablefunctions^\infty\big(\spatialdomain;\continuousfunctions^{1,1}(\overline{Y})\big)\Big); \\ %
    &\fa\mbf{y}\in\overline{Y}:\ %
    0\le\damageprocess(\cdot,\cdot,\mbf{y})%
    \le T^{-1}(\damageconstant_1 %
    - \damageconstant_0)(1-\damageconstant_1)^\alpha %
    \text{ a.e. in }\timedomain\times\spatialdomain\
    \Big\},
  \end{aligned}
\end{equation}
where $\alpha\ge 1$ is some fixed constant and $\continuousfunctions^{m,\lambda}(\overline{Y})$ indicates the specified H\"older space of $m$-times continuously differentiable functions $\overline{Y}\subset\mathbb{R}^{\dimension^2}\to\mathbb{R}$ satisfying the H\"older condition with exponent $\lambda$.

\subsection{Discretization}
Starting off from equations \eqref{Equations:Forward Problem}, we can immediately state our choice of discretization.
Note that the weak formulation for the continuous problem is stated in \cite{GM21}, where also a detailed well-posedness study is offered.
We rely on this specific  contribution to study the quality of our discretization scheme shown in this section.
For the discretization $\timedomain_n$, corresponding  to the time interval  $\timedomain:=(0,\timedomainmax)$ with $0<\timedomainmax<\infty$,  we set
\begin{equation}
  \timedomain_n:=\{\timevariable_n\in\bar{\timedomain};\ \timevariable_{n+1} = \timevariable_{n} + \discretizationInTime,\ n\le N_{\timedomainmax)}\},\quad \discretizationInTime_N:=\timedomainmax/N_{\timedomainmax},\quad N_{\timedomainmax}\in\mathbb{N},
\end{equation}
where $\discretizationInTime\in\mathbb{R}$ denotes the step size in time.
We choose standard continuous Lagrange elements for the spatial discretization in two dimensions, i.e.,
\begin{equation}\label{equation:FE space for displacements}
  \spatialfunctions_h:=\left\{\varphi\in\continuousfunctions^0(\bar{\spatialdomain}_h)^2;\,\forall\,T_h\in\mathcal{T}:\varphi|_{\bar{T_h}}\in\mathcal{P}_1(\bar{T_h}),\boldsymbol{\varphi}=0\text{ on }\boundary_{0,h}\right\},
\end{equation}
where $T_h\in\mathcal{T}$ is assumed to be an element of the regular triangulation $\mathcal{T}$ of $\spatialdomain$ and $\mathcal{P}_1(\bar{T}_h)$ denotes the polynomials over $T_h$ of first order.
Analogously, we introduce the space
\begin{equation}\label{equation:FE space for damage}
  \damagefunctions_{h}:=\left\{\varphi\in\continuousfunctions^0(\bar{\spatialdomain}_h);\, \forall T_h\in\mathcal{T}:\varphi|_{\bar{T_h}}\in\mathcal{P}_1(\bar{T}_h) \right\}
\end{equation}
to find the appropriate FE representation of the damage variable $\damage$ with respect in space.
\begin{framed}
  \begin{Definition}[Discretized Forward Problem]\label{Definition:Discretization}
    For all $n\in\mathbb{N}$ satisfying $0\le n \le N_{\timedomainmax}$, we look for ($\displacements_h(\timevariable_n)$, $\boldsymbol{w}_h(\timevariable_n)$, $\damage_h(\timevariable_n)$) $\in$ $\spatialfunctions_h\times\spatialfunctions_h\times\damagefunctions_h$ such that for all $(\boldsymbol{\varphi}_{\spatialfunctions_h}, \boldsymbol{\varphi}_{\damagefunctions_h})\in \spatialfunctions_h\times \damagefunctions_h$, the expressions
    \begin{subequations}
      \begin{multline} \label{equation:discretized EqOfMo}
        \int_{\spatialdomain_h} \Big(1-\damage_h(\timevariable_n)\Big)\Big(\lambda_h\,\trace\!\big(\strains\big(\displacements_h(\timevariable_n\big)\big)I+2\mu_h\strains\big(\displacements_h(\timevariable_n)\big)\Big):\strains\big(\boldsymbol{\varphi}_{\spatialfunctions_h}\big)\,d\mathcal{L}^2 \\
          = \int_{\spatialdomain_h}\forces(\timevariable_n)\cdot\boldsymbol{\varphi}_{\spatialfunctions_h}\,d\mathcal{L}^2 + \int_{\boundary_{2,h}}\stressforces_h(\timevariable_n)\cdot\boldsymbol{\varphi}_{\spatialfunctions_h}\,d\mathcal{H}^1,
      \end{multline}
      \begin{equation}\label{equation:Projection of Displacementgradient}
        \int_{\spatialdomain_h}\boldsymbol{w}_h\cdot\nabla\boldsymbol{\varphi}_{\spatialfunctions_h}\,d\mathcal{L}^2
          = \int_{\spatialdomain_h}\nabla\displacements_h\cdot\boldsymbol{\varphi}_{\spatialfunctions_h}\,d\mathcal{L}^2,
      \end{equation}
      \begin{gather}
        \timevariable_n =\timevariable_{n-1}+\Delta\timevariable, \\ \label{equation:Explicit_Euler}
        \damage_{\discretizationInSpace}^i(t_n) =\damage_{\discretizationInSpace}^i(\timevariable_{n-1}) + \Big(1-\damage_h^i(\timevariable_{n-1})\Big)^{-\alpha}\damageprocess_h^i\Big(\timevariable_{n-1},\boldsymbol{w}_h^i(\timevariable_{n-1})\Big),\\ \label{equation:damage in FE space}
        \damage_h(\timevariable_n) =\sum_{i=1}^{N_h}\damage_h^i(\timevariable_n)\varphi_{\damagefunctions_h}^i,
      \end{gather}
      hold for all $i=1,\dots,N_h<\infty$ with the discretized initial values
      \begin{align}
        \displacements_h(\timevariable_0) &= \displacements_{h,0}, \\
        \damage_h(\timevariable_0) &= \damage_{h,0}.
      \end{align}
    \end{subequations}
  \end{Definition}
\end{framed}
In \cref{equation:discretized EqOfMo} we start with the FE discretization of the equation of motion.
Since we want to solve the damage equation on every node of the \emph{continuous Lagrange triangulation}, we have to use the $L^2$-Projection in order to compute the values of the displacement gradient on the nodes (see \cref{equation:Projection of Displacementgradient}).
In \cref{equation:Explicit_Euler} we use an explicit Euler scheme to discretize the ODE and employ the computed values to find the FE representation of $\damage_h$ in \cref{equation:damage in FE space}.

\section{Numerical analysis}\label{sec:numerical_analysis}
We aim to show convergence of the numerical scheme to the solution of the continuous model.
Having this scope in mind, we limit our investigation to the study of on an explicit Euler scheme.
Transferring these results to schemes of higher order, like the Runge-Kutta ones, is straightforward.
Note that we chose to keep in our presentation the mollification of the displacements' gradient that we extensively in the mathematical analysis of this problem.
However, in the context of this paper, a natural choice for the mollification could as well be an $\integrablefunctions^2$-projection, as it is already used in the FE scheme.\\[2ex]
\Cref{figure:consistency and convergence} illustrates the general situation.
We take an arbitrary point $\spatialvariables_{}$ of the mesh used to discretize the model in space and look at the damage variable and its continuous evolution $\Phi$, respectively, at the initial and two consecutive time steps of the discretized time interval; see the definition of an evolution in \cite{DeBo2002}.
Compared to a discrete evolution $\Psi$ via the explicit Euler scheme, one ends up making different types of errors.
For numerical schemes aiming to solve these kind of problems, a common approach is to talk about consistency.
We will see that having to deal with a coupled problem introduces additional aspects that have to be taken into account.
In a classical decoupled ODE setting, so-called consistency errors, say $\varepsilon_i$ for $i=1,2$, describe the errors occurring in single time steps, if the correct damage at the previous step is known; compare to Definition 4.3 in \cite{DeBo2002}.
Note that in our case, the correct displacements have to be known, too.
This translates to
\begin{multline}\label{eq:consistency error}
  \varepsilon_i=\Phi(\timevariable_{i-1}, \timevariable_{i},\damageRHS_{\discretizationInSpace},\spatialvariables_{\discretizationInSpace})\damage_{\discretizationInSpace} %
    - \Psi(\timevariable_{i-1},\timevariable_{i},\damageRHS_{\discretizationInSpace},\spatialvariables_{\discretizationInSpace})\damage_{\discretizationInSpace} 
      = \damage_{\discretizationInSpace}(\timevariable_{i},\spatialvariables_{\discretizationInSpace}) - \damage_{\discretizationInSpace}(\timevariable_{i-1},\spatialvariables_{\discretizationInSpace})-\tau_i\damageRHS_{\discretizationInSpace}(\timevariable_{i-1},\damage_{\discretizationInSpace}(\timevariable_{i-1},\spatialvariables_{\discretizationInSpace}))
\end{multline}
for $i=1,2$.
In a subsequent time step, this error is then transported via the discretization scheme, since the right-hand side $f$ is evaluated at an approximation $\damage_{\discretizationInTime}^{i}(\spatialvariables_{\discretizationInSpace})$ rather than the correct damage value $\damage(\timevariable_i,\spatialvariables_{\discretizationInSpace})$.
This leads to an error when calculating the tangent's slope for the next time step and thus to another type of error $\varepsilon_{\discretizationInTime}$, i.e., a \emph{discretization error in time}.
\begin{center}
  \begin{minipage}{.49\linewidth}
    \centering
    \includegraphics{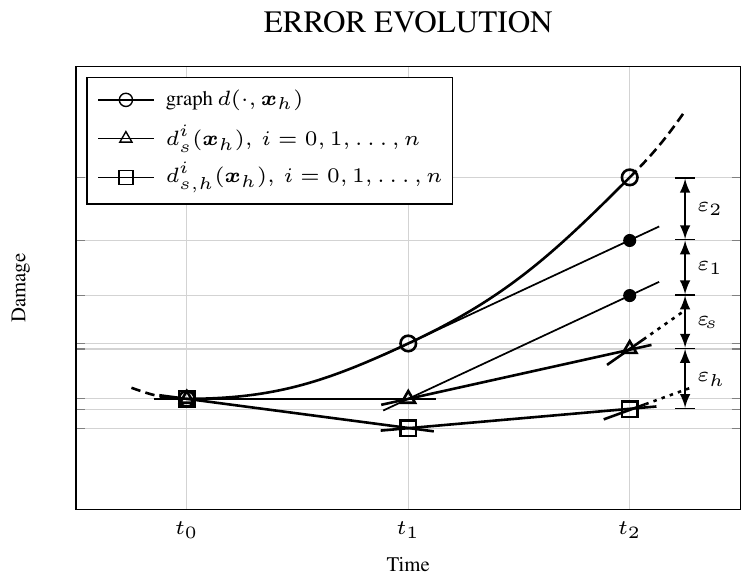}
  \end{minipage}\hfill
  \begin{minipage}{.49\linewidth}
    \centering
    \includegraphics{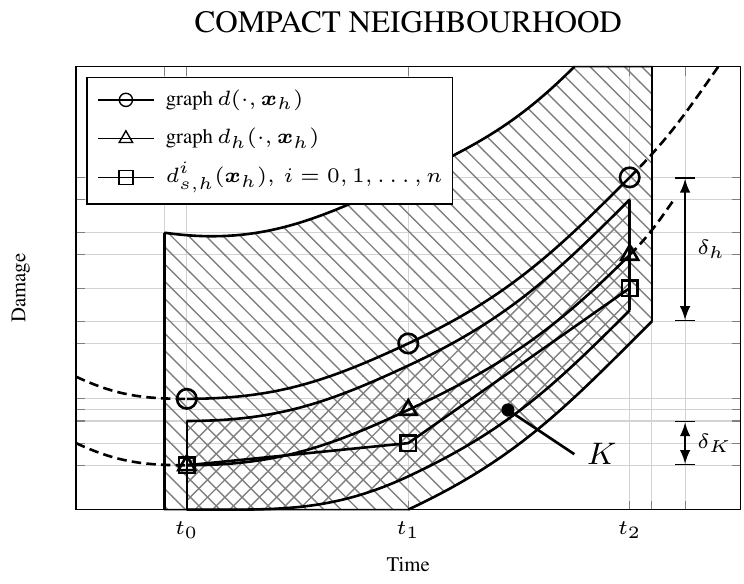}
  \end{minipage}
  \captionof{figure}{Error evolution and overall convergence of numerical scheme.}
  \label{figure:consistency and convergence}
\end{center}
In our case, we face an additional difficulty -- due to the intimate coupling of damage and displacements, we do not have access to the right-hand side $\damageRHS$ but only to its approximation $\damageRHS_{\discretizationInSpace}$ in space.
The fact that $\damageprocess$ depend on the displacements $\displacements$, too, whose solution is approximated via an FEM scheme is the main reason for this.
This causes an additional error $\varepsilon_{\discretizationInSpace}$, which already effects the computation of the first time step.
The situation is shown in the left illustration of \Cref{figure:consistency and convergence}.
This effects the consistency of the numerical scheme in the classical sence.
Figuratively speaking, the approximated right-hand side $r_{\discretizationInSpace}$ causes a deviation from the correct tangent's slope at a specific damage and time.
Therefore, the approximation error from the displacements has to be factored into our investigation. \\[2ex]
The general idea to prove convergence of the numerical scheme is presented in the right subfigure of \Cref{figure:consistency and convergence}.
Here $\damage(\cdot, \spatialvariables_{\discretizationInSpace})$ denotes the exact solution evaluated at some of the time steps and at a specific point in space.
We approximate the right-hand side of the damage evolution equation by utilizing the Finite Element Method we implemented to solve the equation of motion and denote the solution $\damage_{\discretizationInSpace}$
Since we showed in the analysis part of this work that the solution $\damage$ depends Lipschitz-continuously on the right-hand side, we can then directly estimate the approximation error $\|\damage-\damage_{\discretizationInSpace}\|$.
By employing a consistent numerical scheme we solve the discrete damage evolution equuation numerically in time in a second step.
The discrete solution is denoted by $\damage_{\discretizationInTime, \discretizationInSpace}$.
Thus, we can estimate the error between exact and discrete solution by splitting the error into $\|\damage - \damage_{\discretizationInSpace}\|$ and $\|\damage_{\discretizationInSpace} - \damage_{\discretizationInTime, \discretizationInSpace}\|$.
Looking at the right part of \Cref{figure:consistency and convergence} this basically translates into making such small steps in time so that the numerical solution stays within a compact neighborhood $K$ of $\damage_{\discretizationInSpace}$ that completely lies within a neighborhood of the exact solution $\damage$.
%
\subsection{Approximated right-hand side}
We start by formaly introducing the right-hand sides mentioned in the previous section.
Our main statements about them will concern their Lipschitz-in-time property.
\begin{Definition}\label{def:right-hand_sides}
  Let be $\damageprocess\in\damageprocesses\cap \continuousfunctions^{0,1}(\timedomain; \integrablefunctions^\infty (\spatialdomain;\continuousfunctions^{1,1}(\overline{Y})))$.
  We set
  \begin{align}\nonumber
    \damageRHS(\timevariable, \spatialvariables, \damage(\timevariable, \spatialvariables)) &:=\damageprocess(\timevariable, \spatialvariables, \mollifiedgradient O_{E}^{\forces+\stressforces}(\damage(\timevariable,\spatialvariables)))(1-\damage(\timevariable, \spatialvariables))^{-\alpha}, \\
    \damageRHS_{\discretizationInSpace}(\timevariable, \spatialvariables, \damage(\timevariable, \spatialvariables)) &:=\damageprocess(\timevariable, \spatialvariables, \mollifiedgradient O_{E, \discretizationInSpace, \timevariable}^{\forces+\stressforces}(\damage(\timevariable, \spatialvariables)))(1-\damage(\timevariable,\spatialvariables))^{-\alpha}
  \end{align}
  and refer to $\damageRHS$, $\damageRHS_{\discretizationInSpace}$ as the right-hand sides of the damage evolution equation and its approximation, respectively.
  Here the operator $O_{E}^{\forces+\stressforces}\colon\damagefunctions\to\integrablefunctions^{\infty}(\timedomain;\spatialfunctions)$ denotes the solution operator taken from Corollary 3.7 of part 1 (cf. \cite{GM21}) mapping a given damage to the respective displacements.
  We define $O_{E, \discretizationInSpace, \timevariable}^{\forces+\stressforces}\colon\integrablefunctions^{\infty}(\spatialdomain)\to\spatialfunctions_{\discretizationInSpace},\ \damage_{\timevariable}\mapsto \displacements_{\discretizationInSpace, \timevariable}$ mapping a given spatial damage distribution to the respective solution of the discretized equation of motion in space.
\end{Definition}
\begin{Remark}\label{remark:lipschitz_data_EqOfMo}
  Note that time is merely a parameter for the equation of motion in this quasi-static setting.
  Thus choosing data for the equation of motion to be Lipschitz in time suffices to ensure that the solution inherits the property as well.
  For details, see Proposition 3.4 in \cite{GM21} and its proof.
\end{Remark}
\begin{Lemma}\label{Lemma: Lipschitz continuity of right-hand side}
  By choosing data for the equation of motion to be Lipschitz in time, the ODE's right-hand sides from \Cref{def:right-hand_sides} becomes Lipschitz with respect to time.
\end{Lemma}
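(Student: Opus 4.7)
The plan is to exploit the product structure of $\damageRHS$ and $\damageRHS_{\discretizationInSpace}$. Fix a damage $\damage\in\damagefunctions$ and two times $\timevariable_1,\timevariable_2\in\timedomain$. Because $0\le\damage\le\damageconstant_1<1$, the factor $(1-\damage)^{-\alpha}$ is bounded above by $(1-\damageconstant_1)^{-\alpha}$ and plays no role in establishing time-Lipschitz regularity beyond supplying a multiplicative constant. Hence it suffices to show that $\timevariable\mapsto\damageprocess(\timevariable,\cdot,\mollifiedgradient O_E^{\forces+\stressforces}(\damage)(\timevariable,\cdot))$ is Lipschitz from $\timedomain$ into $\integrablefunctions^\infty(\spatialdomain)$, and analogously for the discrete version obtained by replacing $O_E^{\forces+\stressforces}$ with $O_{E,\discretizationInSpace,\timevariable}^{\forces+\stressforces}$.

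Next I would split the difference via an intermediate argument. Writing $\mbf{y}_i:=\mollifiedgradient O_E^{\forces+\stressforces}(\damage)(\timevariable_i,\cdot)$, I insert $\damageprocess(\timevariable_2,\cdot,\mbf{y}_1)$ and apply the triangle inequality. The piece $\damageprocess(\timevariable_1,\cdot,\mbf{y}_1)-\damageprocess(\timevariable_2,\cdot,\mbf{y}_1)$ is controlled directly by the standing hypothesis $\damageprocess\in\continuousfunctions^{0,1}(\timedomain;\integrablefunctions^\infty(\spatialdomain;\continuousfunctions^{1,1}(\overline{Y})))$ built into \Cref{def:right-hand_sides}, giving a Lipschitz-in-time constant that is uniform in $(\spatialvariables,\mbf{y})\in\spatialdomain\times\overline{Y}$. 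The piece $\damageprocess(\timevariable_2,\cdot,\mbf{y}_1)-\damageprocess(\timevariable_2,\cdot,\mbf{y}_2)$ is handled by the $\continuousfunctions^{1,1}(\overline{Y})$ regularity of $\damageprocess$ in its third slot, producing a bound of the form $\tn{Lip}\,\|\mbf{y}_1-\mbf{y}_2\|_{\integrablefunctions^\infty(\spatialdomain;\mbb{R}^{\dimension^2})}$.

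It then remains to control $\|\mbf{y}_1-\mbf{y}_2\|_{\integrablefunctions^\infty}$. By \Cref{remark:lipschitz_data_EqOfMo}, invoking Proposition 3.4 of \cite{GM21}, Lipschitz-in-time data $\forces$ and $\stressforces$ yield a displacement $O_E^{\forces+\stressforces}(\damage)\in\integrablefunctions^\infty(\timedomain;\spatialfunctions)$ that is Lipschitz in time. Since $\mollifiedgradient$ acts pointwise in time as a bounded linear operator from $\spatialfunctions$ into $\integrablefunctions^\infty(\spatialdomain;\mbb{R}^{\dimension^2})$ (here is where the mollification kernel $\mollifier$ contributes a finite operator norm depending only on $\mu$), we obtain $\|\mbf{y}_1-\mbf{y}_2\|_{\integrablefunctions^\infty}\lesssim|\timevariable_1-\timevariable_2|$, closing the estimate for $\damageRHS$.

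For $\damageRHS_{\discretizationInSpace}$ the identical split works, but the continuous operator $O_E^{\forces+\stressforces}$ is replaced by the parameter-dependent FE operator $O_{E,\discretizationInSpace,\timevariable}^{\forces+\stressforces}$. This is where I expect the only real work: one must check that the discrete operator inherits the Lipschitz-in-time property of its data with a constant independent of $\discretizationInSpace$. This is obtained by subtracting the discrete elasticity problems at $\timevariable_1$ and $\timevariable_2$, testing against $\displacements_{\discretizationInSpace,\timevariable_1}-\displacements_{\discretizationInSpace,\timevariable_2}\in\spatialfunctions_{\discretizationInSpace}$, and invoking uniform coercivity of the discrete bilinear form (available thanks to Korn's inequality and the uniform bound $1-\damage\ge 1-\damageconstant_1>0$); the resulting estimate controls the $\spatialfunctions$-norm of the discrete displacement increment by the Lipschitz-in-time norms of $\forces$ and $\stressforces$, and the same mollifier bound as above then transfers the Lipschitz property to $\mollifiedgradient\displacements_{\discretizationInSpace,\timevariable}$. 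Combining all three contributions yields the common time-Lipschitz constant for both $\damageRHS$ and $\damageRHS_{\discretizationInSpace}$.
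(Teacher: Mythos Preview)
Your argument is correct and follows precisely the strategy the paper invokes by citation: the paper's proof simply refers to several results in \cite{SiGr15} for $\damageRHS$ and to \Cref{remark:lipschitz_data_EqOfMo} for $\damageRHS_{\discretizationInSpace}$, and your triangle-inequality split together with the mollifier bound and the discrete energy estimate is exactly what those references supply. The only cosmetic point is that treating $(1-\damage)^{-\alpha}$ as a pure multiplicative constant tacitly reads the lemma with the scalar state variable held fixed, which is indeed the intended ODE interpretation here.
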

\begin{proof}
  The claim for $\damageRHS$ follows directly from Theorem 3.12, Proposition 3.28, Lemma 4.2, and Proposition 4.6 of \cite{SiGr15}.
  Regarding $\damageRHS_{\discretizationInSpace}$, we refer to Remark \ref{remark:lipschitz_data_EqOfMo}.
\end{proof}
\begin{Lemma}\label{Lemma:Lipschitz with respect to state variable}
  $\damageRHS, \damageRHS_{\discretizationInSpace}$ are Lipschitz with respect to the state variable $\damage$ for almost all $\spatialvariables\in\spatialdomain$.
\end{Lemma}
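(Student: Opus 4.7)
The plan is to exploit the product structure of $r$ and $r_h$ and apply a textbook product rule for Lipschitz functions, using that each factor is Lipschitz in $\damage$ on the admissible range $[0,\damageconstant_1]$ with $\damageconstant_1<1$.

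First I would isolate the three building blocks. The scalar factor $\damage\mapsto(1-\damage)^{-\alpha}$ is $C^1$ on the closed interval $[0,\damageconstant_1]$ because $1-\damageconstant_1>0$, so it is Lipschitz with constant $\alpha(1-\damageconstant_1)^{-\alpha-1}$; it is also uniformly bounded by $(1-\damageconstant_1)^{-\alpha}$. The damage process $\damageprocess(\timevariable,\spatialvariables,\cdot)$ lives in $\continuousfunctions^{1,1}(\overline{Y})$ by the definition of $\damageprocesses$ in \eqref{eq:damage_sources}, so for fixed $(\timevariable,\spatialvariables)$ and for arguments lying in the compact ball $\overline{Y}$ it is Lipschitz in its third slot and bounded. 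Finally, by Corollary 3.7 of \cite{GM21} the solution operator $O_E^{\forces+\stressforces}$ is Lipschitz from $\damagefunctions$ into $\integrablefunctions^{\infty}(\timedomain;\spatialfunctions)$, and composing with the mollified gradient $\mollifiedgradient$ (a bounded linear operator that maps into $\overline{Y}$ thanks to the uniform bounds used in part 1) yields that $\damage\mapsto \mollifiedgradient O_E^{\forces+\stressforces}(\damage)$ is Lipschitz with values in $\overline{Y}$.

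Next I would combine the three ingredients. For two admissible damage states $\damage_1,\damage_2$, evaluated pointwise at fixed $(\timevariable,\spatialvariables)$, I would add and subtract the cross term and estimate
\begin{multline*}
|\damageRHS(\timevariable,\spatialvariables,\damage_1)-\damageRHS(\timevariable,\spatialvariables,\damage_2)|
\le \bigl|\damageprocess\bigl(\timevariable,\spatialvariables,\mollifiedgradient O_E^{\forces+\stressforces}(\damage_1)\bigr)-\damageprocess\bigl(\timevariable,\spatialvariables,\mollifiedgradient O_E^{\forces+\stressforces}(\damage_2)\bigr)\bigr|\,(1-\damage_1)^{-\alpha} \\
+\bigl|\damageprocess\bigl(\timevariable,\spatialvariables,\mollifiedgradient O_E^{\forces+\stressforces}(\damage_2)\bigr)\bigr|\,\bigl|(1-\damage_1)^{-\alpha}-(1-\damage_2)^{-\alpha}\bigr|,
\end{multline*}
and then bound the first summand by the Lipschitz constant of $\damageprocess$ times the Lipschitz constant of $\mollifiedgradient O_E^{\forces+\stressforces}$ times $(1-\damageconstant_1)^{-\alpha}\,|\damage_1-\damage_2|$, and the second summand by the uniform bound on $\damageprocess$ times $\alpha(1-\damageconstant_1)^{-\alpha-1}|\damage_1-\damage_2|$. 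This yields Lipschitz continuity in $\damage$ with a constant uniform in $\timevariable$ and for almost every $\spatialvariables$.

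For $\damageRHS_{\discretizationInSpace}$ the argument is identical once I verify that the discrete solution operator $O_{E,\discretizationInSpace,\timevariable}^{\forces+\stressforces}$ is Lipschitz in its damage input. Because the discrete system \eqref{equation:discretized EqOfMo} is the Galerkin projection of the same damage-dependent elliptic problem used in part 1, the bilinear form inherits the same coercivity and boundedness constants (with $(1-\damage)$ bounded from below by $1-\damageconstant_1>0$), so the standard Lax--Milgram/Céa argument applied to the difference of the two discrete problems driven by $\damage_1$ and $\damage_2$ gives a Lipschitz estimate on $\spatialfunctions_{\discretizationInSpace}\subset\spatialfunctions$ with a constant independent of $\discretizationInSpace$. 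Composing with the bounded $\mollifiedgradient$ closes the argument and the same product-rule estimate as above delivers the claim. The main obstacle I expect is the Lipschitz dependence of the discrete solution operator on $\damage$: one has to be careful that $\damage$ enters as a multiplicative coefficient inside the bilinear form rather than as data, so the Lipschitz bound follows from testing the difference equation with $\displacements_{\discretizationInSpace,1}-\displacements_{\discretizationInSpace,2}$ and invoking Korn's inequality, not directly from a continuous-dependence-on-data lemma.
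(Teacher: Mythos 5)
Your proof is correct and follows essentially the same route as the paper's own (very terse) proof, which invokes exactly the same ingredients: the uniform bound $0\le\damage\le\damageconstant_1<1$ making $x\mapsto x^{-\alpha}$ smooth and Lipschitz on a compact interval bounded away from zero, and the $\continuousfunctions^{1,1}(\overline{Y})$ regularity of $\damageprocess$ in its third slot, combined via the product rule for bounded Lipschitz factors. Your write-up is in fact more complete than the paper's three-line argument, since you make explicit the Lipschitz dependence of the continuous and discrete solution operators on $\damage$ --- a channel the paper leaves implicit --- and you correctly note that the discrete case requires its own coercivity/C\'ea-type estimate with a constant independent of $\discretizationInSpace$.
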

\begin{proof}
  We recall that $\damage\in\damagefunctions$ is essentially bounded.
  Also note that $x\mapsto x^{-\alpha}\in\continuousfunctions^{\infty}_0([1-\damageconstant_1,1])$ holds.
  Since $\damageprocess\in\damageprocesses\cap\continuousfunctions^{0,1}(\timedomain; \integrablefunctions^\infty (\spatialdomain;\continuousfunctions^{1,1}(\overline{Y})))$ holds too, the claim holds true.
\end{proof}
We state now a standard auxiliary result concerning the estimation of the spatial errors in the context of the Finite Element Method.
For more on this matter, we refer the reader, for instance, to the standard texts \cite{Br07, PhCi02,DauLi00D,Dz2010}.
Note that we chose continuous Lagrange elements on a regular triangulated domain (see \eqref{equation:FE space for displacements}), i.e., our finite element space is regular enough so that the following result is valid.
\begin{Lemma}\label{lemma:fem_error_estimate}
  Assuming the weak solution $\displacements$ to the equation of motion satisfies $\displacements\in\integrablefunctions^{\infty}(\timedomain;\spatialhilbertfunctions^{k+1}(\spatialdomain))$ for some integer $k\ge 1$ and the data is smooth enough.
  Then it exists a constant $c>0$, which is independent of $\discretizationInSpace$, such that
  \begin{equation}
    \|\displacements(\timevariable) - \displacements_{\discretizationInSpace}(\timevariable)\|_{\spatialhilbertfunctions^1(\spatialdomain)} %
      \le C\discretizationInSpace^k \text{ a.e. in }\timedomain.
  \end{equation}
\end{Lemma}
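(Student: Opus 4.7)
The plan is to reduce the time-dependent estimate to a family of classical elliptic FEM error estimates, parametrized by $\timevariable\in\timedomain$, and then to invoke C\'ea's lemma together with the standard Lagrange interpolation estimate. Since \eqref{equation:discretized EqOfMo} is obtained from the weak form of \eqref{equation:effective stresses}--\eqref{equation:momentum equation} by Galerkin projection onto $\spatialfunctions_{\discretizationInSpace}$ without involving any time derivative of $\displacements$, each time slice can be treated as a pure elliptic problem parametrized by $\damage(\timevariable,\cdot)$.

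First, I would set
\[
a(\timevariable;\displacements,\spatialtestfunctions) := \int_{\spatialdomain}\bigl(1-\damage(\timevariable)\bigr)\bigl(\lambda\,\trace(\strains(\displacements))I+2\mu\,\strains(\displacements)\bigr):\strains(\spatialtestfunctions)\,\dx
\]
and verify that $a(\timevariable;\cdot,\cdot)$ is continuous and coercive on the continuous analogue $\spatialfunctions$ of \eqref{equation:FE space for displacements} with constants independent of $\timevariable$. Continuity follows from $0\le 1-\damage(\timevariable,\spatialvariables)\le 1$ together with the essential boundedness of the Lam\'e coefficients; coercivity follows from the uniform lower bound $1-\damage(\timevariable,\spatialvariables)\ge 1-\damageconstant_1>0$ guaranteed by $\damage\in\damagefunctions$ (cf.\ \eqref{eq:damage_functions}), combined with the ellipticity of the Lam\'e tensor and Korn's inequality on $\spatialfunctions$.

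Second, Galerkin orthogonality $a(\timevariable;\displacements(\timevariable)-\displacements_{\discretizationInSpace}(\timevariable),\spatialtestfunctions_{\discretizationInSpace})=0$ for all $\spatialtestfunctions_{\discretizationInSpace}\in\spatialfunctions_{\discretizationInSpace}$ together with the standard C\'ea argument yields
\[
\|\displacements(\timevariable)-\displacements_{\discretizationInSpace}(\timevariable)\|_{\spatialhilbertfunctions^1(\spatialdomain)} \le c\inf_{\spatialtestfunctions_{\discretizationInSpace}\in\spatialfunctions_{\discretizationInSpace}}\|\displacements(\timevariable)-\spatialtestfunctions_{\discretizationInSpace}\|_{\spatialhilbertfunctions^1(\spatialdomain)}
\]
with $c$ depending only on the continuity and coercivity constants, hence independent of both $\timevariable$ and $\discretizationInSpace$. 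Choosing $\spatialtestfunctions_{\discretizationInSpace}$ to be the Lagrange interpolant $\mathcal{I}_{\discretizationInSpace}\displacements(\timevariable)$ of order $k$ on the regular triangulation $\mathcal{T}$ and invoking the classical Bramble--Hilbert interpolation estimate (see e.g.\ \cite{Br07}) yields $\|\displacements(\timevariable)-\mathcal{I}_{\discretizationInSpace}\displacements(\timevariable)\|_{\spatialhilbertfunctions^1(\spatialdomain)}\le c\,\discretizationInSpace^k\|\displacements(\timevariable)\|_{\spatialhilbertfunctions^{k+1}(\spatialdomain)}$. Combining these two inequalities and taking the essential supremum over $\timevariable\in\timedomain$ proves the claim, with the final constant absorbing $\|\displacements\|_{\integrablefunctions^\infty(\timedomain;\spatialhilbertfunctions^{k+1}(\spatialdomain))}$.

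The only delicate point is the uniformity of the C\'ea constant in $\timevariable$: the damage coefficient $1-\damage(\timevariable)$ fluctuates in time, so \emph{a priori} both the continuity and the coercivity of $a(\timevariable;\cdot,\cdot)$ could deteriorate along evolution. However, the pointwise a.e.\ bound $0\le\damage\le\damageconstant_1<1$ baked into \eqref{eq:damage_functions} makes both the ellipticity lower bound and the continuity upper bound time-independent, so the argument goes through uniformly and the resulting estimate holds for a.e.\ $\timevariable\in\timedomain$. Any elliptic regularity bootstrapping needed to deduce the hypothesis $\displacements\in\integrablefunctions^\infty(\timedomain;\spatialhilbertfunctions^{k+1}(\spatialdomain))$ from smoothness of the Lam\'e coefficients, volumetric forces, and surface tractions is classical and is absorbed into the standing ``data is smooth enough'' assumption.
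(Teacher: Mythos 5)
Your argument is correct, and it is precisely the standard argument. Note, however, that the paper does not actually prove this lemma: it is introduced as a ``standard auxiliary result'' and discharged by citation to classical FEM texts, which contain exactly the C\'ea-plus-interpolation reasoning you wrote out; so there is no distinct in-paper proof to compare against. The one genuinely problem-specific ingredient in your write-up --- that the a.e.\ bounds $0\le\damage\le\damageconstant_1<1$ built into \eqref{eq:damage_functions} make the coercivity (via Korn's inequality on the space with vanishing trace on $\boundary_0$) and continuity constants independent of $\timevariable$, so the C\'ea constant is uniform and the estimate survives the essential supremum over $\timedomain$ --- is worth making explicit, since the paper leaves it implicit. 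One small caveat: the finite element space \eqref{equation:FE space for displacements} uses only $\mathcal{P}_1$ elements, so the interpolant ``of order $k$'' you invoke presupposes degree-$k$ Lagrange elements; the scheme as actually defined realizes only the case $k=1$. That is an imprecision in the lemma's statement rather than a gap in your proof.
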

\begin{Remark}
  See also Proposition 3.5, 3.6 in \cite{GM21} regarding higher regularity properties of the solution to the equation of motion.
\end{Remark}
\begin{Lemma}
  Let $\damage\in\damagefunctions$, $\damageprocess\in\damageprocesses\cap \continuousfunctions^{0,1}(\timedomain; \integrablefunctions^\infty (\spatialdomain;\continuousfunctions^{1,1}(\overline{Y})))$ and respective data for the equation of motion be given.
  Together with the right-hand sides $\damageRHS,\damageRHS_{\discretizationInSpace}$ from Definition \ref{def:right-hand_sides}, the following {\em a priori} estimate
  \begin{equation}
    \|\damageRHS(\damage)-\damageRHS_{\discretizationInSpace}(\damage)\|_{\integrablefunctions^{\infty}(\timedomain;\integrablefunctions^{\infty}(\spatialdomain))} %
    \le C\discretizationInSpace
  \end{equation}
  holds for some constant $C>0$ independent of $\discretizationInSpace$.
\end{Lemma}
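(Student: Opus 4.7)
The plan is to split the difference $\damageRHS(\damage)-\damageRHS_{\discretizationInSpace}(\damage)$ in a way that isolates the only source of discrepancy between the two right-hand sides, namely the term $\mollifiedgradient O_{E}^{\forces+\stressforces}(\damage)$ versus $\mollifiedgradient O_{E,\discretizationInSpace,\timevariable}^{\forces+\stressforces}(\damage)$, and then to reduce the bound to the FE error estimate of \Cref{lemma:fem_error_estimate}.

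First I would write, pointwise in $(\timevariable,\spatialvariables)$,
\begin{equation*}
  \damageRHS(\damage)-\damageRHS_{\discretizationInSpace}(\damage)
  = \bigl(1-\damage\bigr)^{-\alpha}\Bigl[\damageprocess\bigl(\timevariable,\spatialvariables,\mollifiedgradient O_{E}^{\forces+\stressforces}(\damage)\bigr)
  - \damageprocess\bigl(\timevariable,\spatialvariables,\mollifiedgradient O_{E,\discretizationInSpace,\timevariable}^{\forces+\stressforces}(\damage)\bigr)\Bigr].
\end{equation*}
Because $\damage\in\damagefunctions$ with $\damage\le\damageconstant_1<1$, the factor $(1-\damage)^{-\alpha}$ is essentially bounded by $(1-\damageconstant_1)^{-\alpha}$, and can be pulled out in $L^\infty(\timedomain;L^\infty(\spatialdomain))$. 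Next, since $\damageprocess\in\damageprocesses\cap \continuousfunctions^{0,1}(\timedomain;\integrablefunctions^\infty(\spatialdomain;\continuousfunctions^{1,1}(\overline{Y})))$, its $\continuousfunctions^{1,1}$-regularity in the third argument yields a global Lipschitz constant $L_g$ on the bounded ball $\overline{Y}$ and hence
\begin{equation*}
  \bigl|\damageprocess(\cdot,\cdot,\mbf{y}_1)-\damageprocess(\cdot,\cdot,\mbf{y}_2)\bigr|
  \le L_g\,|\mbf{y}_1-\mbf{y}_2| \quad\text{a.e.}
\end{equation*}
Applying this with $\mbf{y}_1=\mollifiedgradient O_{E}^{\forces+\stressforces}(\damage)$ and $\mbf{y}_2=\mollifiedgradient O_{E,\discretizationInSpace,\timevariable}^{\forces+\stressforces}(\damage)$ reduces the claim to proving
\begin{equation*}
  \bigl\|\mollifiedgradient O_{E}^{\forces+\stressforces}(\damage) - \mollifiedgradient O_{E,\discretizationInSpace,\timevariable}^{\forces+\stressforces}(\damage)\bigr\|_{\integrablefunctions^\infty(\timedomain;\integrablefunctions^\infty(\spatialdomain))}
  \le C\discretizationInSpace.
\end{equation*}

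For this last step I would exploit the smoothing property of the mollifier $\mollifier$: for any $v\in L^2(\spatialdomain)$ one has $\|\mollifier\ast v\|_{L^\infty}\le C_\mu\|v\|_{L^2}$ with a constant depending only on $\mollifier$. Applied with $v=\nabla(O_{E}^{\forces+\stressforces}(\damage)-O_{E,\discretizationInSpace,\timevariable}^{\forces+\stressforces}(\damage))$, this yields
\begin{equation*}
  \bigl\|\mollifiedgradient O_{E}^{\forces+\stressforces}(\damage) - \mollifiedgradient O_{E,\discretizationInSpace,\timevariable}^{\forces+\stressforces}(\damage)\bigr\|_{\integrablefunctions^\infty(\spatialdomain)}
  \le C_\mu \bigl\|O_{E}^{\forces+\stressforces}(\damage) - O_{E,\discretizationInSpace,\timevariable}^{\forces+\stressforces}(\damage)\bigr\|_{\spatialhilbertfunctions^1(\spatialdomain)},
\end{equation*}
uniformly in $\timevariable$. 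Finally, I would invoke \Cref{lemma:fem_error_estimate} with $k=1$ (the standard linear-Lagrange rate for the FE scheme \eqref{equation:FE space for displacements}), which gives
\begin{equation*}
  \bigl\|O_{E}^{\forces+\stressforces}(\damage)(\timevariable) - O_{E,\discretizationInSpace,\timevariable}^{\forces+\stressforces}(\damage)\bigr\|_{\spatialhilbertfunctions^1(\spatialdomain)} \le C\discretizationInSpace \quad\text{a.e. in }\timedomain,
\end{equation*}
and taking the essential supremum in $\timevariable$ closes the proof with $C$ depending on $\damageconstant_1$, $\alpha$, $L_g$, $C_\mu$ and the FE constant, but not on $\discretizationInSpace$.

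The main obstacle I anticipate is the transition from the $\spatialhilbertfunctions^1$-type FE error bound to an $L^\infty$-bound on the mollified gradient: one has to be careful that $\mollifier$ genuinely maps $L^2$ into $L^\infty$ with a $\discretizationInSpace$-independent constant (which it does because $\mollifier$ is fixed and smooth), and that the regularity assumption underlying \Cref{lemma:fem_error_estimate}, $\displacements\in\integrablefunctions^\infty(\timedomain;\spatialhilbertfunctions^2(\spatialdomain))$, is indeed available—this is where the higher-regularity remark and Propositions 3.5--3.6 of \cite{GM21} cited above are crucial.
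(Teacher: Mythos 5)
Your proposal is correct and follows essentially the same route as the paper's own proof: bound the $(1-\damage)^{-\alpha}$ factor via the uniform bound $\damage\le\damageconstant_1<1$, use the Lipschitz continuity of $\damageprocess$ in its third argument, pass from the mollified-gradient difference to the $\spatialhilbertfunctions^1$-error of the displacements via the fixed mollifier, and conclude with \Cref{lemma:fem_error_estimate}. Your version simply makes explicit the steps (in particular the $L^2\to L^\infty$ smoothing of the mollifier) that the paper compresses into a single chain of inequalities.
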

\begin{proof}
  The equations from Definition \ref{def:right-hand_sides} together with the uniform bound for damage functions immediately allow us to bound the term $(1-\damage)^{-\alpha}$.
  By definition, $\damageprocess\in\damageprocesses\cap \continuousfunctions^{0,1}(\timedomain; \integrablefunctions^\infty (\spatialdomain;\continuousfunctions^{1,1}(\overline{Y})))$ is Lipschitz.
  Taking  into account a suitable mollifier for the gradient immediately leads us to obtain
  \begin{multline}
    \|\damageRHS_{\spatialvariables}(,\damage(,\spatialvariables))-r_{\spatialvariables,{\discretizationInSpace}}(\timevariable,\damage(\timevariable,\spatialvariables))\|_{\integrablefunctions^{\infty}(\timedomain;\integrablefunctions^{\infty}(\spatialdomain))} \\
      \le c \|\nemytskiioperator(\mollifiedgradient\displacements_{\damage})-\nemytskiioperator(\mollifiedgradient\displacements_{\discretizationInSpace, \damage})\|_{\integrablefunctions^{\infty}(\timedomain;\integrablefunctions^{\infty}(\spatialdomain))}
      \le c \|\displacements_{\damage} - \displacements_{\discretizationInSpace, \damage}\|_{\integrablefunctions^{\infty}(\timedomain;\spatialhilbertfunctions^1(\spatialdomain))}
      \le C \discretizationInSpace.
  \end{multline}
  For this to happen, we used Lemma \ref{lemma:fem_error_estimate}.
  This completes the proof.
\end{proof}
Relying on these preliminaries, we can directly state the error estimate for approximating the right-hand side $\damageRHS$ of the damage evolution equation by $\damageRHS_{\discretizationInSpace}$.
\begin{Corollary}
  Let $\damage, \damage_{\discretizationInSpace}\in\damagefunctions$ be  the solutions to the continuous, and respectively, discretized damage evolution equations.
  $\damageRHS, \damageRHS_{\discretizationInSpace}$ refer to the exact and approximated right-hand sides, respectively.
  Then the following estimate
  \begin{equation}
    \|\damage - \damage_{\discretizationInSpace}\|_{\sobolevfunctions^{1,\infty}(\timedomain;\integrablefunctions^{\infty}(\spatialdomain))} \le C\discretizationInSpace
  \end{equation}
  holds true.
\end{Corollary}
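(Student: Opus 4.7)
The plan is to carry out a standard ODE comparison argument for the pointwise-in-space difference $e(\timevariable,\spatialvariables) \assign \damage(\timevariable,\spatialvariables) - \damage_{\discretizationInSpace}(\timevariable,\spatialvariables)$, combining the three pillars established above: (i) Lipschitz continuity of $\damageRHS$ in the state variable $\damage$ (\Cref{Lemma:Lipschitz with respect to state variable}), (ii) the $\integrablefunctions^\infty(\timedomain;\integrablefunctions^\infty(\spatialdomain))$ bound of order $\discretizationInSpace$ for $\damageRHS - \damageRHS_{\discretizationInSpace}$ (the preceding lemma), and (iii) Gr\"onwall's inequality.

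Concretely, assuming the natural choice $\damage_{\discretizationInSpace}(0,\cdot) = \damage_0 = \damage(0,\cdot)$ so that $e(0,\spatialvariables) = 0$, I would subtract the two evolution equations and insert a $\pm\damageRHS(\timevariable,\spatialvariables,\damage_{\discretizationInSpace}(\timevariable,\spatialvariables))$ to obtain the pointwise splitting
\begin{equation*}
  e'(\timevariable,\spatialvariables)
  = \big[\damageRHS(\timevariable,\spatialvariables,\damage) - \damageRHS(\timevariable,\spatialvariables,\damage_{\discretizationInSpace})\big]
  + \big[\damageRHS(\timevariable,\spatialvariables,\damage_{\discretizationInSpace}) - \damageRHS_{\discretizationInSpace}(\timevariable,\spatialvariables,\damage_{\discretizationInSpace})\big].
\end{equation*}
The first bracket is bounded pointwise by $L\,|e(\timevariable,\spatialvariables)|$ thanks to \Cref{Lemma:Lipschitz with respect to state variable} (the Lipschitz constant $L$ being uniform because $\damage,\damage_{\discretizationInSpace}\in\damagefunctions$ keeps the factor $(1-\damage)^{-\alpha}$ uniformly controlled by $\damageconstant_1 < 1$), while the second bracket is bounded by $C\discretizationInSpace$ by the previous lemma. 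Integrating in time from $0$ to $\timevariable$ and taking the essential supremum in $\spatialvariables$ yields the integral inequality
\begin{equation*}
  \|e(\timevariable,\cdot)\|_{\integrablefunctions^\infty(\spatialdomain)}
  \le C\discretizationInSpace\, \timevariable
  + L\int_0^{\timevariable}\|e(\discretizationInTime,\cdot)\|_{\integrablefunctions^\infty(\spatialdomain)}\,\dd \discretizationInTime,
\end{equation*}
to which Gr\"onwall's lemma applies and gives $\|e(\timevariable,\cdot)\|_{\integrablefunctions^\infty(\spatialdomain)}\le C\discretizationInSpace\, T\, e^{LT}$ for all $\timevariable\in\timedomain$, i.e.\ the $\integrablefunctions^\infty(\timedomain;\integrablefunctions^\infty(\spatialdomain))$ bound.

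For the missing time-derivative part of the $\sobolevfunctions^{1,\infty}$ norm, I would simply feed the just-obtained estimate back into the original expression for $e'(\timevariable,\spatialvariables)$, obtaining $|e'(\timevariable,\spatialvariables)|\le L\|e(\timevariable,\cdot)\|_{\integrablefunctions^\infty(\spatialdomain)} + C\discretizationInSpace \le C'\discretizationInSpace$ a.e., and take essential suprema.

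I do not expect a genuine obstacle here: the argument is the textbook continuous-dependence proof for ODEs, transposed parametrically in $\spatialvariables\in\spatialdomain$. The only point that needs minor care is to verify that the Lipschitz constant $L$ from \Cref{Lemma:Lipschitz with respect to state variable} is indeed uniform in $(\timevariable,\spatialvariables)$ and does not depend on $\discretizationInSpace$; this follows from the fact that both $\damage$ and $\damage_{\discretizationInSpace}$ are confined to the set $\damagefunctions$, so the singular factor $(1-\cdot)^{-\alpha}$ is evaluated on a closed subinterval of $[0,1)$ on which it is smooth and bounded together with its derivatives, and from the Lipschitz regularity of $\damageprocess$ in its arguments.
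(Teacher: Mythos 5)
Your argument is correct and follows essentially the same route as the paper: both rest on the Lipschitz dependence of the damage solution on the right-hand side combined with the preceding $O(\discretizationInSpace)$ estimate for $\damageRHS-\damageRHS_{\discretizationInSpace}$. The only difference is presentational --- the paper cites the stability result from Chapter 1.3.1 of \cite{GM21} and writes down the resulting one-line estimate, whereas you unpack that stability result explicitly via the splitting $\pm\damageRHS(\cdot,\cdot,\damage_{\discretizationInSpace})$ and Gr\"onwall's inequality, which is precisely the argument hiding behind the citation.
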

\begin{proof}
  We can start right away with estimating the error in damage.
  For details we refer to part one of this work, especially to the chapter on well-posedness of the damage evolution equation, i.e., Chapter 1.3.1.
  \begin{multline}
    \|\damage - \damage_{\discretizationInSpace}\|_{\sobolevfunctions^{1,\infty}(\timedomain;\integrablefunctions^{\infty}(\spatialdomain))} 
      \le \|r(\damage)-r_{\discretizationInSpace}(\damage)\|_{\integrablefunctions^{\infty}(\timedomain;\integrablefunctions^{\infty}(\spatialdomain))} %
        + \int_0^{\timedomainmax}\|r(\damage)-r_{\discretizationInSpace}(\damage)\|_{\integrablefunctions^{\infty}(\timedomain;\integrablefunctions^{\infty}(\spatialdomain))}\, d\tau %
      \le C\timedomainmax\discretizationInSpace.
  \end{multline}
  This proves our claim.
\end{proof}
After having dealt with the error stemming from the approximation of the right-hand side of the damage equation, we focus now on the consistency error.
As previously, we take the approximated right-hand side $\damageRHS_{\discretizationInSpace}$ and discuss the error made by using an explicit Euler scheme.
Note that transferring such results to the case of higher-order schemes like Runge-Kutta is straightforward; see for instance \cite{DeBo2002}.
\subsection{Consistency}
We are interested in deriving an upper bound of the \emph{consistency error} from \eqref{eq:consistency error} and some kind of uniform estimate on the \emph{order of convergence} $p$ of this particular error.
\begin{Lemma}\label{Lemma:Consistency}
  Take the damage evolution equation from \eqref{damageevolution}, \eqref{initial_damage} with initial damage $\damage_0\in\damagefunctions_0$ and exchange the right-hand side with $\damageRHS_{\discretizationInSpace}$ from \eqref{def:right-hand_sides}.
  Then the consistency error in \eqref{eq:consistency error} is bounded by the size of the time step of the discretization, i.e. it exists $C>0$ such that
  \begin{equation}
    \|\consistencyerror_i\|_{\infty,\timedomain, \spatialdomain} = \|\damage-\damage_{\discretizationInSpace} - \tau\damageRHS_{\discretizationInSpace}\|\le C \tau^2.
  \end{equation}
\end{Lemma}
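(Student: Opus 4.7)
The plan is to carry out the standard explicit-Euler consistency argument, adapted to the semidiscrete setting in which the ODE's right-hand side is the approximated one $\damageRHS_{\discretizationInSpace}$. At any fixed spatial node, $\damage_{\discretizationInSpace}$ solves $\damage_{\discretizationInSpace}'(\timevariable) = \damageRHS_{\discretizationInSpace}(\timevariable, \damage_{\discretizationInSpace}(\timevariable))$, so I would start by rewriting the consistency error as
\begin{equation*}
  \consistencyerror_i(\spatialvariables)
  = \damage_{\discretizationInSpace}(\timevariable_i,\spatialvariables) - \damage_{\discretizationInSpace}(\timevariable_{i-1},\spatialvariables) - \tau\, \damage_{\discretizationInSpace}'(\timevariable_{i-1},\spatialvariables)
  = \int_{\timevariable_{i-1}}^{\timevariable_i}\!\! \bigl(\damage_{\discretizationInSpace}'(\sigma,\spatialvariables) - \damage_{\discretizationInSpace}'(\timevariable_{i-1},\spatialvariables)\bigr)\, d\sigma,
\end{equation*}
which is just the integral remainder after one term of Taylor's expansion around $\timevariable_{i-1}$. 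With this identity in hand, the claimed $C\tau^2$ bound reduces entirely to controlling the time modulus of continuity of $\damage_{\discretizationInSpace}'$ in the $\integrablefunctions^\infty(\spatialdomain)$-norm.

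\noindent
The substantial step is therefore to show that $\damage_{\discretizationInSpace}'$ is Lipschitz in time, uniformly in $\spatialvariables$, i.e.\ that $\damage_{\discretizationInSpace}\in\sobolevfunctions^{2,\infty}(\timedomain;\integrablefunctions^\infty(\spatialdomain))$. Using $\damage_{\discretizationInSpace}'=\damageRHS_{\discretizationInSpace}(\cdot,\damage_{\discretizationInSpace}(\cdot))$ and the add-and-subtract trick
\begin{equation*}
  \damageRHS_{\discretizationInSpace}(\timevariable_2,\damage_{\discretizationInSpace}(\timevariable_2)) - \damageRHS_{\discretizationInSpace}(\timevariable_1,\damage_{\discretizationInSpace}(\timevariable_1))
  = \bigl[\damageRHS_{\discretizationInSpace}(\timevariable_2,\damage_{\discretizationInSpace}(\timevariable_2)) - \damageRHS_{\discretizationInSpace}(\timevariable_1,\damage_{\discretizationInSpace}(\timevariable_2))\bigr]
  + \bigl[\damageRHS_{\discretizationInSpace}(\timevariable_1,\damage_{\discretizationInSpace}(\timevariable_2)) - \damageRHS_{\discretizationInSpace}(\timevariable_1,\damage_{\discretizationInSpace}(\timevariable_1))\bigr],
\end{equation*}
the first bracket is controlled by the Lipschitz-in-time property proven in \Cref{Lemma: Lipschitz continuity of right-hand side}, while the second is controlled by \Cref{Lemma:Lipschitz with respect to state variable} together with the $\sobolevfunctions^{1,\infty}$-in-time regularity of $\damage_{\discretizationInSpace}$ that is built into the definition of $\damagefunctions$ in \eqref{eq:damage_functions}. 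Inserting the resulting uniform Lipschitz constant $L$ into the integral remainder above gives $|\consistencyerror_i(\spatialvariables)| \le L\,\tau^2/2$ for a.e.\ $\spatialvariables$, which is the desired estimate.

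\noindent
I expect the main obstacle to be careful bookkeeping of the constants rather than any deep analytical issue. I have to make sure that every Lipschitz constant in the chain -- the time-Lipschitz constant of $\damageRHS_{\discretizationInSpace}$, the state-Lipschitz constant, the $\sobolevfunctions^{1,\infty}$-norm of $\damage_{\discretizationInSpace}$, and the constant coming from $(1-\damage)^{-\alpha}$ -- is independent of $\spatialvariables\in\spatialdomain$, so that the essential supremum over $\spatialdomain$ inside $\|\cdot\|_{\infty,\timedomain,\spatialdomain}$ does not spoil the estimate. It should also be independent of $\discretizationInSpace$, so that $C$ does not blow up under mesh refinement; the uniform upper bound $\damage\le\damageconstant_1<1$ coded into $\damagefunctions$ keeps the singular factor $(1-\damage)^{-\alpha}$ uniformly bounded, and the $\discretizationInSpace$-independence of the FEM error estimate in \Cref{lemma:fem_error_estimate} takes care of the remaining uniformity question.
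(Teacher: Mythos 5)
Your proposal is correct and follows essentially the same route as the paper: the integral (Taylor-remainder/mean-value) representation of the one-step error, followed by the add-and-subtract of $\damageRHS_{\discretizationInSpace}(\timevariable_{i-1},\damage_{\discretizationInSpace}(\sigma))$ and an appeal to the time- and state-Lipschitz properties of $\damageRHS_{\discretizationInSpace}$ (\Cref{Lemma: Lipschitz continuity of right-hand side} and \Cref{Lemma:Lipschitz with respect to state variable}) together with the $\sobolevfunctions^{1,\infty}$-in-time bound on $\damage_{\discretizationInSpace}$. Your explicit attention to the $\spatialvariables$- and $\discretizationInSpace$-uniformity of the constants is a point the paper passes over more quickly, but the argument is the same.
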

\begin{proof}
  Let $\spatialvariables_{\discretizationInSpace}$ be fixed.
  Note that $\spatialvariables\in\spatialdomain$ is merely a parameter in \eqref{damageevolution} and \eqref{initial_damage}.
  Employing the Mean-Value Theorem for absolutely continuous functions leads us to
  \begin{multline}
    \damage_{\discretizationInSpace}(\timevariable + \tau,\spatialvariables_{\discretizationInSpace}) - \damage_{\discretizationInSpace}(\timevariable,\spatialvariables_{\discretizationInSpace}) 
      =\tau\int_0^1\partial_y\damage_{\discretizationInSpace}(\timevariable+y\tau, \spatialvariables_{\discretizationInSpace})\,dy = \tau\int_0^1 \damageRHS_{\discretizationInSpace}(\timevariable +y\tau, \spatialvariables_{\discretizationInSpace}, \damage_{\discretizationInSpace}(\timevariable+y\tau, \spatialvariables_{\discretizationInSpace}))\,dy\text{ a.e. in }\timedomain
  \end{multline}
  and insterting this into \eqref{eq:consistency error} reveals
  \begin{multline}
    \consistencyerror(\timevariable,\spatialvariables_{\discretizationInSpace}, \tau)
    = \damage_{\discretizationInSpace}(\timevariable + \tau,\spatialvariables_{\discretizationInSpace}) - \damage_{\discretizationInSpace}(\timevariable,\spatialvariables_{\discretizationInSpace}) - \tau \damageRHS_{\discretizationInSpace}(\timevariable, \spatialvariables_{\discretizationInSpace},\damage_{\discretizationInSpace}(\timevariable,\spatialvariables_{\discretizationInSpace})) \\
    = \tau\int_0^1\Big(\damageRHS_{\discretizationInSpace}(\timevariable + y\tau, \spatialvariables_{\discretizationInSpace},\damage_{\discretizationInSpace}(\timevariable+y\tau,\spatialvariables_{\discretizationInSpace}))-\damageRHS_{\discretizationInSpace}(\timevariable,\spatialvariables_{\discretizationInSpace},\damage_{\discretizationInSpace}(\timevariable,\spatialvariables_{\discretizationInSpace}))\Big)\,dy.
  \end{multline}
  Together with the previous lemmas on the expected regularity for $\damageRHS_{\discretizationInSpace}$ we arrive at
  \begin{multline}
    |\varepsilon(\timevariable, \spatialvariables_{\discretizationInSpace}, \tau)|
      \le \constant\tau\int_0^1\Big|\damageRHS_{\discretizationInSpace}(\timevariable + y\tau, \spatialvariables_{\discretizationInSpace},\damage_{\discretizationInSpace}(\timevariable+y\tau,\spatialvariables_{\discretizationInSpace}))-\damageRHS_{\discretizationInSpace}(\timevariable,\spatialvariables_{\discretizationInSpace},\damage_{\discretizationInSpace}(\timevariable,\spatialvariables_{\discretizationInSpace}))\\
      \pm \damageRHS_{\discretizationInSpace}(\timevariable, \spatialvariables_{\discretizationInSpace},\damage_{\discretizationInSpace}(\timevariable+y\tau,\spatialvariables_{\discretizationInSpace}))\Big|\,dy %
      \le \constant\tau^2
  \end{multline}
  for some $\constant>0$.
  Note that there is only a finite number of discretization points $\spatialvariables_{\discretizationInSpace}\in\spatialdomain_{\discretizationInSpace}$ in space.
  In addition, the point in time was arbitrarily chosen.
  Thus, we conclude this proof by
  \begin{equation}
    \|\consistencyerror\|_{\infty,\timedomain,\spatialdomain} \le \constant\tau^2
  \end{equation}
\end{proof}
\begin{Corollary}
  The numerical scheme is consistent with \emph{order of convergence} equal to one.
\end{Corollary}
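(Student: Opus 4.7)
The plan is to read the order of convergence directly off the bound established in \Cref{Lemma:Consistency}. Recall the standard convention (e.g.\ Definition 4.3 in \cite{DeBo2002}) that a one-step method is said to be \emph{consistent of order $p$} if its local consistency error admits an estimate of the form $\|\consistencyerror_i\|_{\infty}\le C\tau^{p+1}$, with a constant $C>0$ that is independent of the step size $\tau$ and of the time-step index $i$. Matching the hypothesis $\tau^{p+1}=\tau^2$ then immediately yields $p=1$.

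First I would invoke \Cref{Lemma:Consistency}, which delivers exactly the bound $\|\consistencyerror_i\|_{\infty,\timedomain,\spatialdomain}\le C\tau^2$ for the explicit Euler step applied to the damage evolution equation with the (already approximated) right-hand side $\damageRHS_{\discretizationInSpace}$. Next, I would note that the constant produced in the previous proof depends only on the Lipschitz constants of $\damageRHS_{\discretizationInSpace}$ in time and in the state variable (supplied by \Cref{Lemma: Lipschitz continuity of right-hand side} and \Cref{Lemma:Lipschitz with respect to state variable}) and on the essential bounds coming from $\damage\in\damagefunctions$; in particular it is independent of $\tau$, of the index $i$, and of the finitely many spatial nodes $\spatialvariables_{\discretizationInSpace}\in\spatialdomain_{\discretizationInSpace}$. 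This uniformity is precisely what turns the pointwise-in-time estimate into the required global consistency bound.

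The only real subtlety is notational, namely aligning the paper's use of "order of convergence $p$" with the $\tau^{p+1}$ convention on the local error; once this identification is made, no further propagation argument, no additional regularity on the data, and no extra spatial estimate is needed, since the discretization-in-space error was already absorbed into the construction of $\damageRHS_{\discretizationInSpace}$ in the previous subsection. The corollary is therefore essentially a one-line rephrasing of \Cref{Lemma:Consistency} in the standard vocabulary of numerical ODE theory, and I do not anticipate any substantive obstacle.
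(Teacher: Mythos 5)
Your proposal is correct and matches the paper's (implicit) argument: the paper states this corollary without proof precisely because it follows immediately from \Cref{Lemma:Consistency} via the standard convention that a local error bound of order $\tau^{p+1}=\tau^{2}$ means consistency order $p=1$. Your additional remarks on the uniformity of the constant in $i$, $\tau$, and the spatial nodes are already contained in the proof of \Cref{Lemma:Consistency}, so nothing new is needed.
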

\subsection{Convergence}
In this subsection, we concentrate on showing the convergence of our numerical scheme.
Firstly, we focus the attention on the discretized damage evolution with approximated right-hand side.
Afterwards, we tie our results together and show the wanted convergence for the coupled system.
\subsubsection{The approximated damage evolution}
\begin{Proposition}
  The following estimate for the discretization error of the damage evolution equation with approximated right-hand side holds for some constant $\constant>0$
  \begin{equation}
    \|\varepsilon_{\discretizationInSpace,\discretizationInTime}\|_{\integrablefunctions^{\infty}(\timedomain;\integrablefunctions^{\infty}(\spatialdomain))} \le \constant \tau.
  \end{equation}
\end{Proposition}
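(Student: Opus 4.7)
The plan is to adapt the classical convergence argument for the explicit Euler method, combining the per-step consistency estimate from \Cref{Lemma:Consistency} with the state-variable Lipschitz property of \Cref{Lemma:Lipschitz with respect to state variable}, and closing via a discrete Gronwall inequality. Because the damage ODE is pointwise in space once $\damageRHS_{\discretizationInSpace}$ has been constructed, the entire argument can be carried out nodewise and then finalized by taking the supremum over the finite set of mesh nodes.

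First I would fix a mesh node $\spatialvariables_{\discretizationInSpace}$ and set $e_n := \damage_{\discretizationInSpace}(\timevariable_n,\spatialvariables_{\discretizationInSpace}) - \damage_{\discretizationInTime,\discretizationInSpace}^n(\spatialvariables_{\discretizationInSpace})$, where $\damage_{\discretizationInSpace}$ is the continuous-in-time solution of the damage equation with right-hand side $\damageRHS_{\discretizationInSpace}$ and $\damage_{\discretizationInTime,\discretizationInSpace}^n$ is the explicit Euler iterate from \eqref{equation:Explicit_Euler}. By construction $e_0 = 0$. Subtracting the Euler update from the exact increment of $\damage_{\discretizationInSpace}$, and adding and subtracting the term $\tau\damageRHS_{\discretizationInSpace}(\timevariable_{n-1},\spatialvariables_{\discretizationInSpace},\damage_{\discretizationInSpace}(\timevariable_{n-1},\spatialvariables_{\discretizationInSpace}))$, yields
\begin{equation*}
e_n = e_{n-1} + \tau\bigl[\damageRHS_{\discretizationInSpace}(\timevariable_{n-1},\spatialvariables_{\discretizationInSpace},\damage_{\discretizationInSpace}(\timevariable_{n-1},\spatialvariables_{\discretizationInSpace})) - \damageRHS_{\discretizationInSpace}(\timevariable_{n-1},\spatialvariables_{\discretizationInSpace},\damage_{\discretizationInTime,\discretizationInSpace}^{n-1}(\spatialvariables_{\discretizationInSpace}))\bigr] + \consistencyerror_n,
\end{equation*}
with $|\consistencyerror_n|\le \constant\tau^2$ by \Cref{Lemma:Consistency}.

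Second, invoking \Cref{Lemma:Lipschitz with respect to state variable} with a Lipschitz constant $L$ that is uniform on the admissible interval $[0,\damageconstant_1]$ converts this into the one-step recurrence $|e_n|\le (1+L\tau)|e_{n-1}| + \constant\tau^2$. Iterating and using $1+L\tau\le e^{L\tau}$ together with the geometric sum gives
\begin{equation*}
|e_n| \le \constant\tau\cdot\frac{e^{L n \tau}-1}{L} \le \frac{\constant}{L}\bigl(e^{L\timedomainmax}-1\bigr)\tau, \qquad n\le N_{\timedomainmax}.
\end{equation*}
Since the bound is independent of the chosen node, taking the supremum over the finitely many mesh nodes and passing to the piecewise-constant-in-time reconstruction delivers exactly the claimed $\integrablefunctions^{\infty}(\timedomain;\integrablefunctions^{\infty}(\spatialdomain))$ estimate.

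The step I expect to be the main obstacle is making the Lipschitz constant $L$ truly uniform, because the factor $(1-\damage)^{-\alpha}$ appearing in $\damageRHS_{\discretizationInSpace}$ blows up as $\damage\to 1$. This forces the discrete iterates to stay inside $[0,\damageconstant_1]$. I would verify the required invariance inductively using the uniform upper bound $T^{-1}(\damageconstant_1-\damageconstant_0)(1-\damageconstant_1)^\alpha$ built into the definition \eqref{eq:damage_sources} of $\damageprocesses$ — this threshold was chosen precisely so that the continuous damage cannot leave $[0,\damageconstant_1]$, and the same telescoping bookkeeping carries over to the Euler step under a mild smallness condition on $\tau$. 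Should nodewise invariance fail for some coarse $\tau$, one can instead work with the truncated right-hand side $\damageRHS_{\discretizationInSpace}\cdot\indicatorfunction_{[0,\damageconstant_1]}$, for which $L$ is trivially uniform, and remove the truncation a posteriori using the invariance already established for the continuous problem.
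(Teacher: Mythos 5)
Your proposal follows essentially the same route as the paper: the error at each node is split into the one-step consistency error of \Cref{Lemma:Consistency} plus a term propagated via the Lipschitz property of \Cref{Lemma:Lipschitz with respect to state variable}, the resulting recursion $|e_n|\le(1+\Lambda\tau)|e_{n-1}|+\constant\tau^2$ is closed with the geometric sum and $1+\Lambda\tau\le e^{\Lambda\tau}$, and the bound is made uniform over the finitely many mesh nodes. The uniform-Lipschitz issue you flag is handled in the paper by the same localization idea — it works in a compact tube of radius $\delta_K$ around $\damage_{\discretizationInSpace}$ and chooses $\tau$ small enough a posteriori so that the discrete iterates remain inside it — so your inductive-invariance (or truncation) argument is just a variant of what is already done there.
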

\begin{proof}
  We start with some preliminaries. Let $K\subset S\times [0,\damageconstant_1]$ be a compact subset.
  According to Lemma \ref{Lemma:Lipschitz with respect to state variable}, the right-hand sides $\damageRHS, \damageRHS_{\discretizationInSpace}$ are Lipschitz in $\damage$.
  Therefore, there exist some constants $\tau_K, \Lambda_K>0$ such that
  \begin{equation}
    \forall (\timevariable,\damage_{\damage_{\discretizationInSpace}}), (\timevariable,\bar\damage_{\discretizationInSpace})\in K,\, 0<\tau\le\tau_K:\,|\tau\damageRHS_{\discretizationInSpace}(\timevariable,\damage_{\discretizationInSpace}) - \tau\damageRHS_{\discretizationInSpace}(\timevariable,\bar\damage_{\discretizationInSpace})| \le \Lambda_K|\damage_{\discretizationInSpace}-\bar\damage_{\discretizationInSpace}|.
  \end{equation}
  Without loss of generality we further assume $\hat\tau\le\tau_K$, so that for all $\timevariable_i\in\timedomain_{N}$ and $j=0,1,\dots,n_{\discretizationInTime}-1$ there exists a discrete evolution (cf.~\Cref{figure:consistency and convergence}). Also, there exists at least one $\delta_K>0$ such that for almost all $\spatialvariables\in\spatialdomain$
  \begin{equation}
    \forall\timevariable\in\bar\timedomain:\forall y\in\mathbb{R}:|y-\damage_{\discretizationInSpace}(\timevariable,\spatialvariables)|\le\delta_K \Rightarrow (\timevariable,y)\in K
  \end{equation}
  holds. \\[2ex]
  We have to show that our scheme is introducing a function on the discretized time domain $\timedomain_N$.
  For the moment, we will just assume just this for the discrete evolution and that the discretization error at every discrete point in time $\timevariable\in\timedomain_N$ is bounded by $\delta_K$, i.e.,
  \begin{equation}
    \forall\timevariable\in\timedomain_{N}:\,|\varepsilon_{\discretizationInSpace, \discretizationInTime}(\timevariable)| = |\damage_{\discretizationInSpace}(\timevariable)-\damage_{\discretizationInSpace, \discretizationInTime}(\timevariable)|\le\delta_K.
  \end{equation}
  The last property ensures for our discrete evolution to stay within a tubular neighborhood around our proper solution $\damage_{\discretizationInSpace}$. \\[2ex]
  The next step is then to split up the discretization error into different parts by adding a zero, i.e.,
  \begin{multline}
    \varepsilon_{\discretizationInSpace, \discretizationInTime}(\timevariable_{j+1}) %
      =\damage_{\discretizationInSpace}(\timevariable_{j+1})-\damage_{\discretizationInSpace, \discretizationInTime}(\timevariable_{j+1}) \\
      = \damage_{\discretizationInSpace}(\timevariable_{j+1})-\big(\damage_{\discretizationInSpace, \discretizationInTime}(\timevariable_{j}) + \tau\damageRHS_{\discretizationInSpace}(\timevariable_j, \damage_{\discretizationInSpace, \discretizationInTime}(\timevariable_j)\big) %
      \pm \damage_{\discretizationInSpace}(\timevariable_j) + \tau\damageRHS_{\discretizationInSpace}(\timevariable_j, \damage_{\discretizationInSpace}(\timevariable_j)) \\
      = \underbrace{\damage_{\discretizationInSpace}(\timevariable_{j+1})-\big(\damage_{\discretizationInSpace}(\timevariable_j) + \tau\damageRHS_{\discretizationInSpace}(\timevariable_j, \damage_{\discretizationInSpace}(\timevariable_j))\big)}_{=\varepsilon_{j+1}} \hspace{2.5cm}\\
      + \underbrace{\big(\damage_{\discretizationInSpace}(\timevariable_j) + \tau\damageRHS_{\discretizationInSpace}(\timevariable_j, \damage_{\discretizationInSpace}(\timevariable_j))\big) -\big(\damage_{\discretizationInSpace, \discretizationInTime}(\timevariable_{j}) + \tau\damageRHS_{\discretizationInSpace}(\timevariable_j, \damage_{\discretizationInSpace, \discretizationInTime}(\timevariable_j)\big)}_{=:\varepsilon_{\discretizationInSpace, \discretizationInTime}(\timevariable_{j+1})}.
  \end{multline}
  Realizing that here $\varepsilon_{j+1}$ is the consistency error from Lemma \ref{Lemma:Consistency}, we focus on the remainder and build on the fact that the right-hand sides are Lipschitz with respect to the damage variable (see Lemma \ref{Lemma:Lipschitz with respect to state variable}).
  It yields
  \begin{multline}
    |\varepsilon_{\discretizationInSpace, \discretizationInTime}(\timevariable_{j+1})| %
      = \Big|\damage_{\discretizationInSpace}(\timevariable_j) - \damage_{\discretizationInSpace, \discretizationInTime}(\timevariable_j) + \tau \Big( \damageRHS_{\discretizationInSpace}(\timevariable_j,\damage_{\discretizationInSpace, \discretizationInTime}(\timevariable_j)) - \damageRHS_{\discretizationInSpace}(\timevariable_j,\damage_{\discretizationInSpace}(\timevariable_j)) \Big) \Big| \\
      \le (1+\tau\Lambda_K)|\damage_{\discretizationInSpace}(\timevariable_j)-\damage_{\discretizationInSpace, \discretizationInTime}(\timevariable_j)| %
        = (1+\tau\Lambda_K)|\varepsilon_{\discretizationInSpace, \discretizationInTime}(\timevariable_j)|.
  \end{multline}
  This allows for an recursive formulation of the discretization error:
  \begin{enumerate}[(i)]
    \item $|\varepsilon_{\discretizationInSpace, \discretizationInTime}(\timevariable_0)|=0$,
    \item $|\varepsilon_{\discretizationInSpace, \discretizationInTime}(\timevariable_{j+1})|\le \constant\tau^2 + (1+\tau\Lambda_K)|\varepsilon_{\discretizationInSpace, \discretizationInTime}(\timevariable_j)|, \quad j=0,1,\dots,n_{\discretizationInTime}-1$.
  \end{enumerate}
  Using the recursion repeatedly leads us to
  \begin{equation}
    |\varepsilon_{\discretizationInSpace, \discretizationInTime}(\timevariable_{j+1})|\le \constant(\tau^2\sum_{j=0}^{n_{\discretizationInTime}-1}(1+\tau\Lambda_K)^j + \underbrace{(1+\tau\Lambda_K)^j|\varepsilon_{\discretizationInSpace, \discretizationInTime}(\timevariable_0)}_{=0}|
  \end{equation}
  for all $j=0,1,\dots,n_{\discretizationInTime}-1$.
  Using the representation $(a-1)s_n=a^{n+1}-1$ for partial sums $s_n$ of a geometric series then gives
  \begin{multline}
    |\varepsilon_{\discretizationInSpace, \discretizationInTime}(\timevariable_{j+1})|\le \constant\tau^2\sum_{j=0}^{n_{\discretizationInTime}-1}(1+\tau\Lambda_K)^j %
    = \constant\tau^2\frac{(1+\tau\Lambda_K)^{n_{\discretizationInTime}}-1}{1+\tau\Lambda_k-1} \\
    = \tau\frac{\constant}{\Lambda_K}\Big((1+\tau)\Lambda_K)^{n_{\discretizationInTime}}-1\Big)
    \le \tau\frac{\constant}{\Lambda_K}\Big((\exp(\tau\Lambda_K)^{n_{\discretizationInTime}}-1\Big) \\
    \le \tau\frac{\constant}{\Lambda_K}\Big((\exp(\Lambda_K(\timedomainmax-\timevariable_0))-1\Big)
  \end{multline}
  for $j=0,1,\dots,n_{\discretizationInTime}-1$.
  Choosing $\tau>0$ suitably small then shows
  \begin{equation}\label{Equation:Discretization error}
    |\varepsilon_{\discretizationInSpace, \discretizationInTime}(\timevariable_{j})|\le\delta_K, \quad j=0,1,\dots,n_{\discretizationInTime}
  \end{equation}
  as well as that $\damage_{\discretizationInSpace, \discretizationInTime}(\timevariable_j)$ indeed is a mesh function on $\timedomain_N$.
  Since the right-hand side of \eqref{Equation:Discretization error} does not depend on $\spatialvariables\in\spatialdomain_{\discretizationInSpace}$ we have proven the claim.
\end{proof}
\subsubsection{Fully coupled numerical scheme}
We close this section by showing the convergence of the coupled numerical scheme.
Since we are approximating damage in our FE scheme via a specific time discretization, we have to take this error into account as well.
\begin{Proposition}
  The implemented numerical scheme converges towards the exact solution of at least first order.
  Thus the following estimate holds
  \begin{equation}
    \|\displacements - \displacements_{\discretizationInSpace, \discretizationInTime}\|_{\integrablefunctions^{\infty}(\timedomain;\spatialhilbertfunctions^1(\spatialdomain))} %
      + \|\damage - \damage_{\discretizationInSpace, \discretizationInTime}\|_{\sobolevfunctions^{1, \infty}(\timedomain;\integrablefunctions^{\infty}(\spatialdomain))} %
        \le\constant(\tau + \discretizationInSpace)
  \end{equation}
  for some constant $\constant>0$.
\end{Proposition}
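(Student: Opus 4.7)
The plan is to split the total error by triangle inequality through the ``semi-discrete'' intermediate object $(\displacements_{\discretizationInSpace}, \damage_{\discretizationInSpace})$, that is, the spatially discretized solution driven by the exact (in time) damage-evolution ODE with approximated right-hand side $\damageRHS_{\discretizationInSpace}$. Concretely, I would estimate
\begin{equation*}
\|\damage - \damage_{\discretizationInSpace,\discretizationInTime}\|
\;\le\;
\|\damage - \damage_{\discretizationInSpace}\|
+ \|\damage_{\discretizationInSpace} - \damage_{\discretizationInSpace,\discretizationInTime}\|,
\qquad
\|\displacements - \displacements_{\discretizationInSpace,\discretizationInTime}\|
\;\le\;
\|\displacements - \displacements_{\discretizationInSpace}\|
+ \|\displacements_{\discretizationInSpace} - \displacements_{\discretizationInSpace,\discretizationInTime}\|,
\end{equation*}
all norms taken in the function spaces appearing in the statement. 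The two damage terms are already controlled: the first by the Corollary (spatial right-hand side error), giving $C\discretizationInSpace$, and the second by the preceding Proposition (discretization error of the ODE with $\damageRHS_{\discretizationInSpace}$), giving $C\tau$. Adding these is the end of the damage half.

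For the displacements half, the first summand is directly Lemma~\ref{lemma:fem_error_estimate} applied to the pure FEM approximation, contributing $C\discretizationInSpace^k \le C\discretizationInSpace$ since $k \ge 1$. The second summand requires a short argument: both $\displacements_{\discretizationInSpace}$ and $\displacements_{\discretizationInSpace,\discretizationInTime}$ are obtained by the \emph{same} spatially discretized solution operator $O_{E,\discretizationInSpace,\timevariable}^{\forces+\stressforces}$, but evaluated at two different damage inputs $\damage_{\discretizationInSpace}$ and $\damage_{\discretizationInSpace,\discretizationInTime}$. Invoking the Lipschitz dependence of the elasticity solution on the damage variable (Corollary~3.7 / Proposition~3.4 of \cite{GM21} cited in Definition~\ref{def:right-hand_sides} and Remark~\ref{remark:lipschitz_data_EqOfMo}), uniformly in $\timevariable$ and in the mesh parameter $\discretizationInSpace$, one obtains
\begin{equation*}
\|\displacements_{\discretizationInSpace} - \displacements_{\discretizationInSpace,\discretizationInTime}\|_{\integrablefunctions^{\infty}(\timedomain;\spatialhilbertfunctions^1(\spatialdomain))}
\;\le\; C\,\|\damage_{\discretizationInSpace} - \damage_{\discretizationInSpace,\discretizationInTime}\|_{\integrablefunctions^{\infty}(\timedomain;\integrablefunctions^{\infty}(\spatialdomain))}
\;\le\; C\tau,
\end{equation*}
where the last step uses the already established $\tau$-bound on the discretization error of the damage ODE. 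Summing the four contributions yields the claimed $\constant(\tau + \discretizationInSpace)$ estimate.

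The main obstacle I expect is not algebraic but conceptual: justifying that the Lipschitz constant for the elasticity operator with respect to damage is \emph{uniform in $\discretizationInSpace$}. Since the discrete operator $O_{E,\discretizationInSpace,\timevariable}^{\forces+\stressforces}$ is the Galerkin projection of the continuous one onto $\spatialfunctions_h$, and the bilinear form's coercivity constant degrades only through the factor $1-\damage \ge 1-\damageconstant_1 > 0$ (which is uniform thanks to the definition of $\damagefunctions$), the Lipschitz bound transfers from the continuous to the discrete level with the same constant; this is essentially C\'ea's lemma combined with the coercivity bound from Part~1. Once this uniformity point is recorded, the rest of the argument reduces to assembling estimates already proven in the paper, and no further estimation in time or space is needed.
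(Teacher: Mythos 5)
Your argument is correct in substance but takes a genuinely different route for the displacement half. The paper does not pass through the semi-discrete displacement $\displacements_{\discretizationInSpace}$ at all: it subtracts the continuous equation of motion from the one driven by $\damage_{\discretizationInSpace,\discretizationInTime}$, adds and subtracts $\divergence\big((1-\damage_{\discretizationInSpace,\discretizationInTime})\elasticitytensor\strains(\displacements)\big)$, tests the resulting identity with $\displacements-\displacements_{\discretizationInSpace,\discretizationInTime}$, and uses coercivity (uniform because $1-\damage\ge 1-\damageconstant_1>0$) to get the single energy estimate $\|\displacements(\timevariable)-\displacements_{\discretizationInSpace,\discretizationInTime}(\timevariable)\|_{\spatialhilbertfunctions^1(\spatialdomain)}\le\constant\|\damage-\damage_{\discretizationInSpace,\discretizationInTime}\|$, explicitly declaring that interpolation/Galerkin errors in the elasticity solve are neglected for clarity; the whole displacement error is then absorbed into the damage error, which is bounded by $\constant(\discretizationInSpace+\tau)$ exactly as in your damage half. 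Your decomposition $\|\displacements-\displacements_{\discretizationInSpace}\|+\|\displacements_{\discretizationInSpace}-\displacements_{\discretizationInSpace,\discretizationInTime}\|$, with the FEM estimate for the first term and the $\discretizationInSpace$-uniform Lipschitz dependence of the discrete solution operator on damage (via C\'ea plus uniform coercivity) for the second, is arguably the more careful bookkeeping, since it makes explicit where the spatial Galerkin error of the elasticity solve enters rather than waving it away. One small imprecision to record: your intermediate $\displacements_{\discretizationInSpace}$ is the discrete elasticity solution at the \emph{semi-discrete} damage $\damage_{\discretizationInSpace}$, so the first summand is not literally the statement of the FEM lemma (which compares solutions at the \emph{same} damage); you need one more triangle inequality through $O_{E,\discretizationInSpace,\timevariable}^{\forces+\stressforces}(\damage)$ and the same Lipschitz bound applied to $\|\damage-\damage_{\discretizationInSpace}\|\le C\discretizationInSpace$, which costs nothing but should be said. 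With that noted, both routes deliver the claimed $\constant(\tau+\discretizationInSpace)$.
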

\begin{proof}
  We start with the equation of motion.
  Note that we neglect the interpolation errors of data in the favour of a clear presentation.
  It is common knowledge that the interpolation errors can be bound by the step size in space. \\[2ex]
  Let $\timevariable\in\timedomain$ be fixed.
  We start with the weak formulation of both the original and discretized equations of motion
  \begin{align}
    -\divergence\left((1-\damage(\timevariable))\elasticitytensor\strains(\displacements)(\timevariable)\right) &=\forces(\timevariable), \\
    -\divergence\left((1-\damage_{\discretizationInSpace, \discretizationInTime}(\timevariable))\elasticitytensor\strains(\displacements_{\discretizationInSpace, \discretizationInTime})(\timevariable)\right) &=\forces(\timevariable),
  \end{align}
  which holds true almost everywhere in $\spatialvariables\in\spatialdomain$.
  Subtracting both equations from one another, adding a zero, and reorganizing the result leads to
  \begin{equation}
    -\divergence\Big(\big(1-\damage_{\discretizationInSpace, \discretizationInTime}(\timevariable)\big)\big(\elasticitytensor\strains(\displacements(\timevariable) - \displacements_{\discretizationInSpace, \discretizationInTime}(\timevariable)\big)\Big)
      = -\divergence\Big(\big(\damage(\timevariable) - \damage_{\discretizationInSpace, \discretizationInTime}(\timevariable)\big)\elasticitytensor\strains(\displacements(\timevariable)\Big).
  \end{equation}
  Testing this identity with $\displacements - \displacements_{\discretizationInSpace, \discretizationInTime}$ and combining with integration by parts allow for a suitable weak form.
  Keep in mind that the damage variable is uniformly bounded by 1.
  Thus, employing standard arguments commonly to establish energy estimates for the equation of motion,  we obtain
  \begin{equation}
    \|\displacements(\timevariable) - \displacements_{\discretizationInSpace, \discretizationInTime}(\timevariable)\|_{\spatialhilbertfunctions^1(\spatialdomain)} %
      \le\constant\| \damage - \damage_{\discretizationInSpace, \discretizationInTime}\|_{\sobolevfunctions^{1, \infty}(\timedomain;\integrablefunctions^{\infty}(\spatialdomain))}.
  \end{equation}
  Since this holds for an arbitrary point in time, we can replace the left-hand side with $\|\displacements - \displacements_{\discretizationInSpace, \discretizationInTime}\|_{\integrablefunctions^{\infty}(\spatialhilbertfunctions^1(\spatialdomain))}$.
  In other words, we can bound the error made during computing the displacements by the one from our time-stepping scheme which we investigated in the previous sections.
  Employing these results we are able to state
  \begin{multline}
    \|\displacements - \displacements_{\discretizationInSpace, \discretizationInTime}\|_{\integrablefunctions^{\infty}(\spatialhilbertfunctions^1(\spatialdomain))} %
      + \|\damage - \damage_{\discretizationInSpace, \discretizationInTime}\|_{\sobolevfunctions^{1, \infty}(\timedomain;\integrablefunctions^{\infty}(\spatialdomain))} 
      \le (1 + \constant)\| \damage - \damage_{\discretizationInSpace, \discretizationInTime}\|_{\sobolevfunctions^{1, \infty}(\timedomain;\integrablefunctions^{\infty}(\spatialdomain))}
      \le\constant(\discretizationInSpace + \tau),
  \end{multline}
  which proves the wanted statement.
\end{proof}

\section{Singularities in corners}\label{Sec:Corners}
Before discussing the implementation of our numerical scheme we address some of the statements we made in part one of this work regarding boundary conditions and regularity (see Remark 2.1(a) in \cite{GM21}).\\[2ex]
We have indicated some references as to why connecting boundaries of different types causes additional issues when trying to show higher regularity of a solution.
Without an additional compatibility condition or asking for a boundary that is made up of unions of connected components, singularities my arise at the points where these different types of boundary conditions intersect.
Our intention here is to give physical motivation for the observed phenomena and also give a few options to overcome these drawbacks.
In Section \ref{sec:3}, we discuss the general numerical response of our model, while in one of the subsequent subsection we compare these different approaches.
\subsection{Physically motivated example}
When looking at the spatial distribution of damage pointed out in Subsection \ref{sec:tc03_1}, see \Cref{figure:results TC03S01} e.g., we observe that the maximum damage is being attained only at the domain's corners.
This is not what we would expect physically.
In a standard tensile test, e.g., the geometry of a cylinder-like specimen is chosen in such a way that the highest stresses appear in the smallest cross-section.
Damage evolution is essentially governed by $\damageprocess$ representing some sort of norm of the equivalent stresses and thus connecting damage to stress.
So we would expect the highest stresses to actually appear in this cross-sectional area.
To get a better understanding we take a closer look at the stresses.
\Cref{figure:corners} illustrates the situation.
\begin{center}
  \begin{minipage}{.7\linewidth}
    \centering
    \includegraphics{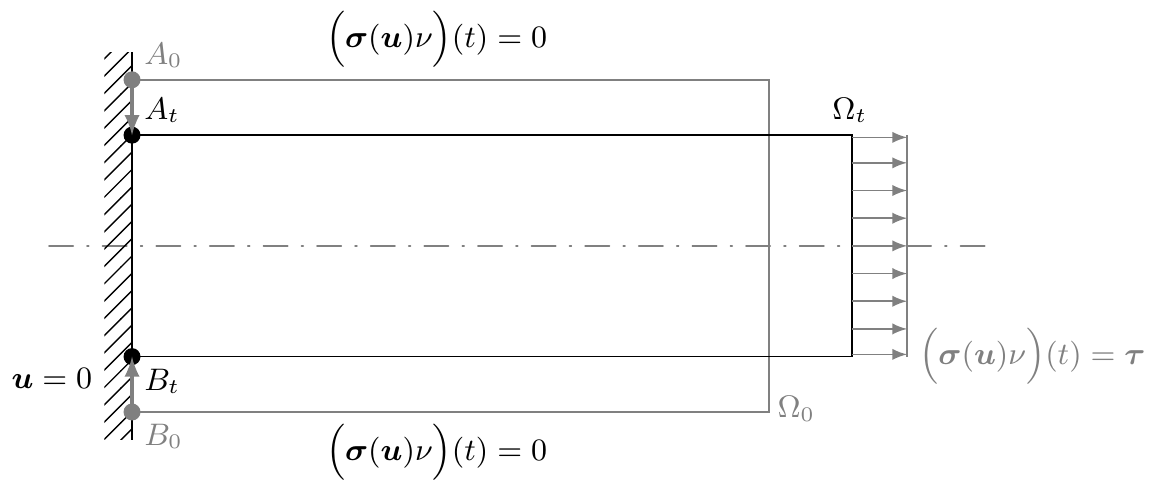}
  \end{minipage}
  \captionof{figure}{Explaining corner singularities}
  \label{figure:corners}
\end{center}
Figure \ref{figure:corners} shows an example.
We have a linear-elastic material, that is fixed at the left end, pulled at the right end, and tension free on the remaining parts of the boundary.
The reference and damaged configuration is denoted by $\spatialdomain_0$, $\spatialdomain_{\timevariable}$, respectively.
Due to the traction on the right end, the material is also deformed vertically, which is caused by \emph{Poisson's ratio} describing the signed ratio of transverse to axial strain.
Since the upper and lower parts of the boundary are traction free, these boundary parts are free to move towards the neutral axis depicted by the dashed line.
This transverse contraction is often referred to as \emph{necking} in engineering literature.
If we take a closer look at the corners of the undeformed configuration, i.e., namely $A_0, B_0$, these points, however, are fixed due to the Dirichlet type boundary condition and therefore obstructed to move to positions $A_{\timevariable},B_{\timevariable}$ despite the transverse contraction.
The only way to remedy this situation physically are such high stresses to keep $A_{\timevariable}=A_0$ and $B_{\timevariable}=B_0$ satisfied.
Note this is a phenomena not connected to damage or the coupling for damage and elastic behavior, but inherent in the elastic model.
Mathematically, we have commented on this in part one of our work and referred to general works on regularity in connection with different types of boundary conditions.
\begin{Remark}
  Modeling this situation with Dirichlet-type boundary conditions entails a couple limitations, physically.
  Firstly, no material point on that part of the boundary can not have any sort of displacement.
  In most cases when looking at geometries of test specimen this means only a small part of the boundary that connects to the test machine can be modelled with Dirichlet conditions.
  Secondly, this also entails a completely rigid test machine, that is no displacements at all.
  These conditions of no displacement at all is a very strong imposition and idealization for a linear elastic model.
\end{Remark}
\subsection{How to remedy the occurrence of singularities?}
One way to solve this issue is to only apply a Neumann-type boundary condition on the whole of $\partial\spatialdomain$.
This would entail to change the basic functional space for displacements to
\begin{equation}
  \spatialfunctions:=\left\{\tractiondrivendisplacements\in\sobolevfunctions^{1,2}(\spatialdomain);\, \int_{\partial\spatialdomain}\tractiondrivendisplacements\,d\boldsymbol{s}=0\right\}.
\end{equation}
Depending on the type of application, this makes sense.
For instance, in many tensile test setups specimen are clamped into the machine used for testing and therefore held by force or friction, respectively.
One could also change the roughness of a specimens' surface in the area where the specimen is fixed in the test machine.
This would allow for a smoother transfer of forces into the material and could be modeled with boundary forces of higher regularity.
Note that there are measuring procedures were actual measurements are taken close to the surface.
Please also note that choosing Neumann-type boundary condition as proposed here, limits our identification setting.
The reason being that $\damageprocess$ have satisfy certain symmetry condition, i.e., $\damageprocess\in\continuousfunctions^{1,1}(\mathbb{R}^4)$ instead of $\damageprocess\in\continuousfunctions^{1,1}(\timedomain\times\spatialdomain\times\mathbb{R}^4)$, for example.\\[2ex]
If the specimen is fixed in the test machine by form closure, then one can make use of the fact that for a classical tensile test that the state of stress is homogeneous, which would also allow a modelling with only Neumann-type boundary conditions, like we did in the first test cases of Section \ref{sec:3}.
Also, see Table \ref{Table:TestCases}.\\[2ex]
This brings us to applying Robin-type boundary conditions with suitable $\robin>0$ such that
\begin{equation}
  \stresses(\displacements)\normals + \robin(\displacements - \boundarydisplacements) = \stressforces, \quad \text{ on }  \partial\spatialdomain.
\end{equation}
Here, $\boundarydisplacements$ denotes displacements prescribed on the boundary and $\stressforce$ forces on part of the boundary.
In the case that $\boundarydisplacements,\stressforces=\boldsymbol{0}$ holds, the parameter $\robin$ can be viewed as a type of modulus of resilience and as such taking the stiffness or elasticity of the test machine into account.
This relaxes the effect of the observed regularity and can model a smoother transition from one boundary condition to the next.
The parameter can also be modeled as time and space dependent.
Note that these type of boundary conditions does not work in our setting (cf. \cite{GM21}).\\[2ex]
Another way is to change the geometry in such a way that different type of boundary conditions do not intersect.
This situation is not possible in a three-dimensional setting.
If modelling plane phenomena or reducing dimensions to fewer than three dimensions are topics of interest, then this is a good alternative.

\section{Implementation}
We briefly comment on some of the implementation details.
This entails specifying the different sets of parameters that were used for the simulation, including different types of damage processes $\damageprocess$, types of boundary conditions, or different types of domains investigated, e.g.
We close the section with a high-level look at the algorithm that we employed for solving the coupled problem.
\subsection{Model parameters}
In view of \Cref{Definition:Discretization}, we start with parametrizations of the discretized model according to \Cref{Table:Parametrization of the numerical model}.
For simplicity, we decided to use a quadratically shaped domain.
Keep in mind that we want to model a standard tensile.
\begin{center}
    \captionof{table}{Parametrization of numerical models}
    {%
    \renewcommand{\arraystretch}{1.25}
    \begin{longtable}{@{\extracolsep{\fill}}lll@{}} \noalign{\hrule height 1.5pt}
      {\bfseries Parameter}                & {\bfseries Description}                  &  {\bfseries Choice/Value}	                                                                      \\ \noalign{\hrule height 1.5pt} \\[-2ex]
      $\timedomain\times\spatialdomain_0$  &  computational domain                    &  $[0,10]\times([0,1]\times[0,1])$                                                               \\
      $\boundary_0$                        &  fixed bottom                            &  $[0,10]\times[0,1]\times\{0\}$                                                                 \\
      $\boundary_1$                        &  tension free sides                      &  $[0,10]\times\{0\}\times[0,1]$,                                                                \\
                                           &                                          &  $[0,10]\times\{1\}\times[0,1]$                                                                 \\
      $\boundary_2$                        &  loaded top side                         &  $[0,10]\times[0,1]\times\{1\}$                                                                 \\
      $\lambda$                            &  Lam\'e coefficient                      &  $121.15$                                                                                       \\
      $\mu$                                &  Lam\'e coefficient                      &  $80.77$                                                                                        \\
      $\forces$                            &  volumetric forces                       &  $\boldsymbol{0}$                                                                               \\
      $\alpha$                             &  model parameter                         &  $1.0$                                                                                          \\
      $\robin$                             &  parameter Robin                         &  $100.0$                                                                                        \\
      $\damageprocess_0$                   &  damageprocess Model 0                   &  $0$                                                                                            \\
      $\damageprocess_1$                   &  damageprocess Model 1                   &  $0.008(\frac{2}{3}(1+\nu)\sigma_{eq}^2 + 3(1-2\nu)\sigma_{H}^2)$                               \\
      $\damageprocess_2$                   &  damageprocess Model 2                   &  $0.00625\damageprocess_1$                                                                      \\
      $\sigma_{H}$                         &  hydrostatic stresses                    &  $\frac{1}{3}\trace \stresses$                                                                  \\
      $\sigma_{eq}$                        &  von Mises stresses                      &  $\sqrt{\frac{3}{2}(\stresses-\sigma_{H}\boldsymbol{I}):(\stresses-\sigma_{H}\boldsymbol{I})}$  \\
      $\stressforces_0$                    &  boundary stresses                       &  $\boldsymbol{0}$                                                                               \\
      $\stressforces_1$                    &  boundary stresses                       &  $(\begin{matrix} 0 & 0.5\,\timevariable\end{matrix})^{\transpose}$                             \\
      $\stressforces_2$                    &  boundary stresses                       &  $(\begin{matrix} 0 & 5.0\sin(\frac{\pi}{5.0}\timevariable)\end{matrix})^{\transpose}$          \\
      $\boundarydisplacements_0$           &  displacements on parts of $\boundary$   &  $\boldsymbol{0}$                                                                               \\
      $\boundarydisplacements_1$           &  displacements on parts of $\boundary$   &  $(\begin{matrix} 0 & 0.5\,\timevariable\end{matrix})^{\transpose}$                             \\
      $\boundarydisplacements_2$           &  displacements on parts of $\boundary$   &  $(\begin{matrix} 0 & 0.5\sin(\frac{\pi}{5.0}\timevariable)\end{matrix})^{\transpose}$          \\
      $\displacements_0$                   &  initial displacements                   &  $\boldsymbol{0}$                                                                               \\
      $\displacements_0$                   &  initial displacements                   &  $\boldsymbol{0}$                                                                               \\
      $\damageconstant_0$                  &  damage constant                         &  $0.999$
    \end{longtable}
    \label{Table:Parametrization of the numerical model}
    }
\end{center}
  The damage process $\damageprocess_0$ is used for referencing reasons, relating the damage models to damage-free ones.
  Taken from \cite{JeLe84}, the choice of $\damageprocess_1$ represents a simple version of an early Kachanov-type model.
  The difference in coefficients for $\damageprocess_1$, $\damageprocess_2$ is due to the effect different types of boundary conditions have on the solution.
  This will be discussed in more detail in sections \ref{Sec:Corners}, \ref{sec:3}.
\subsection{Computational domain}
For our numerical simulations, we investigated three different shapes of domains for spatial discretization.
These will be discussed in test case \texttt{TC03} Section \ref{sec:3}.
\begin{center}
  \begin{minipage}[t]{0.33\linewidth}
   \centering
   \includegraphics[trim=300 100 250 75, clip,width=\linewidth]{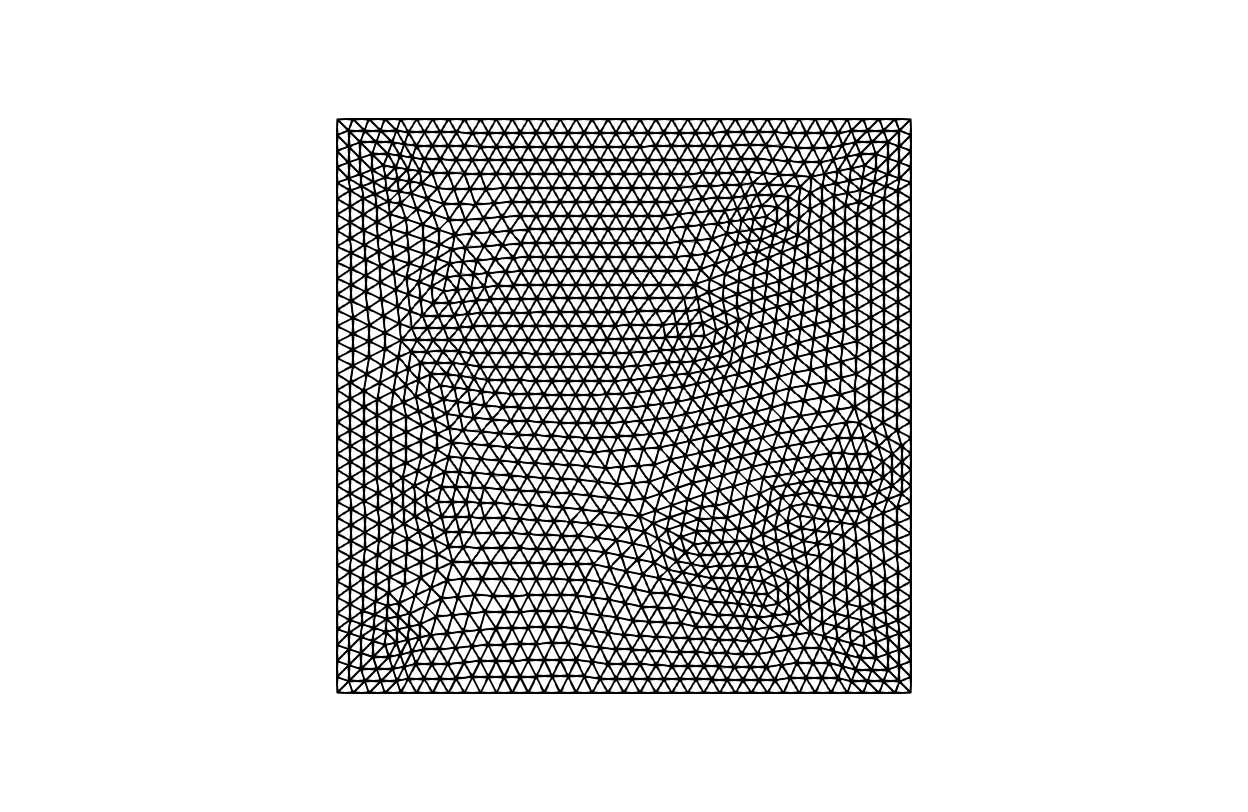} 
  \end{minipage}\hfill
  \begin{minipage}[t]{0.33\linewidth}
   \centering
   \includegraphics[trim=300 100 250 75, clip,width=\linewidth]{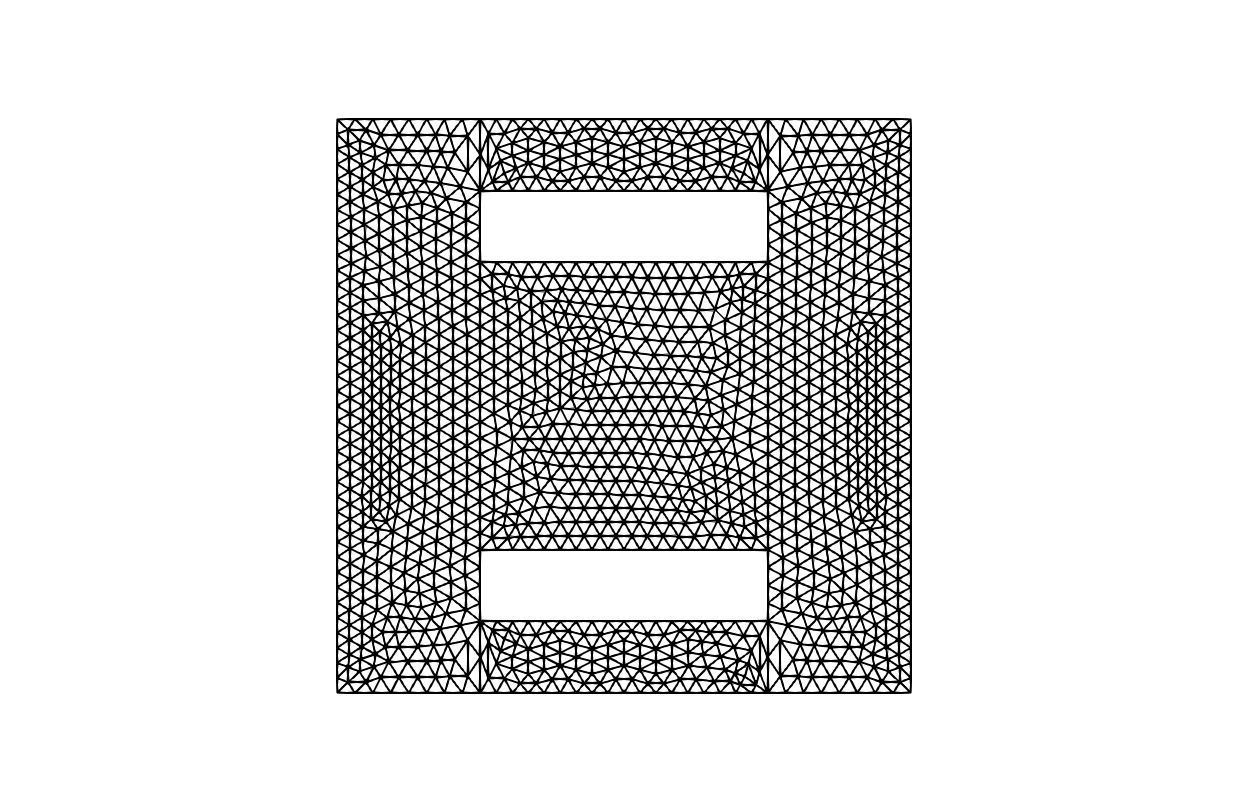} 
  \end{minipage}\hfill
  \begin{minipage}[t]{0.33\linewidth}
   \centering
   \includegraphics[trim=300 100 250 75, clip,width=\linewidth]{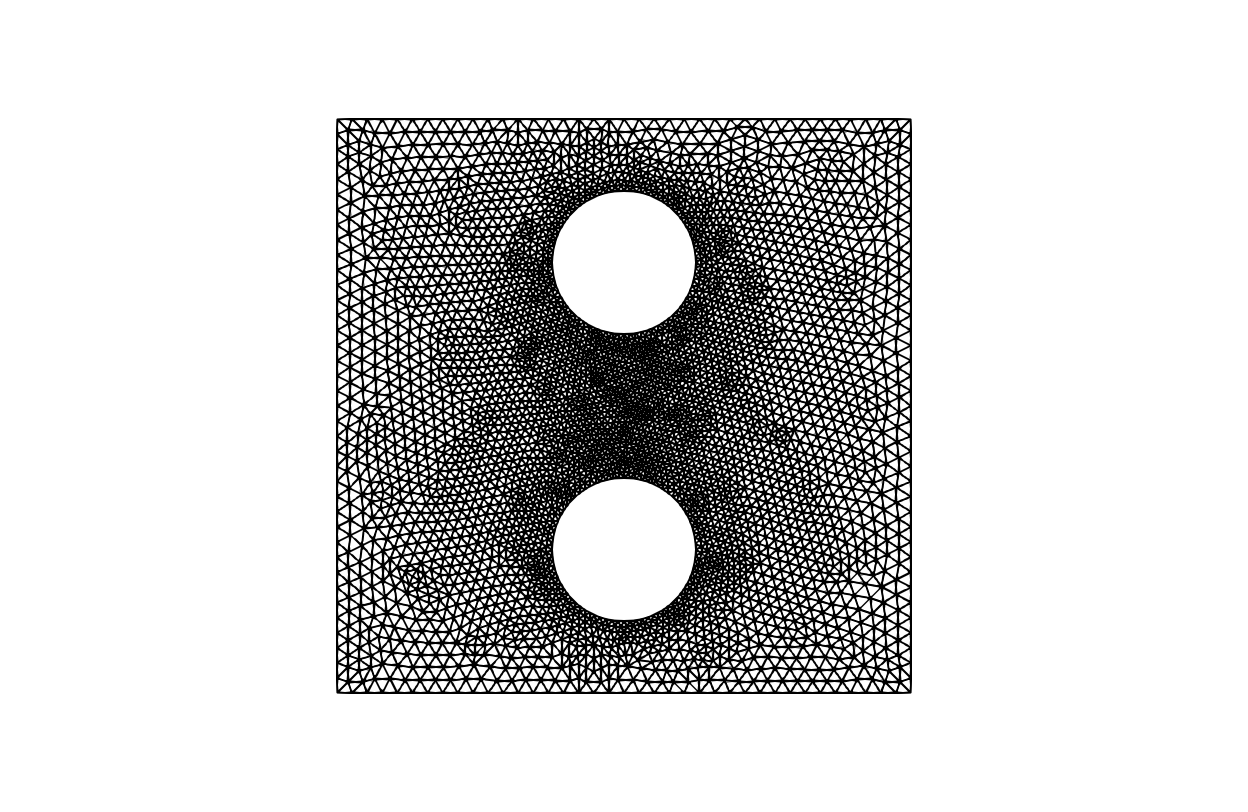} 
  \end{minipage}
  \captionof{figure}{Computational domains $\spatialdomain_0$, $\spatialdomain_1$, $\spatialdomain_2$ from left to right}
  \label{figure:Computational domains}
 \end{center}
Domain $\spatialdomain_0$ is a triangulated unit square where the boundaries and boundary conditions are prescribed as listed in Table \ref{Table:Parametrization of the numerical model}.
For the remaining cases the outer boundaries will take the role of $\boundary_1$ in $\spatialdomain_0$ and we will always apply homogeneous Neumann-type conditions on them.
The inner boundaries of $\spatialdomain_1$, $\spatialdomain_2$ will be denoted by $\boundary_0$, $\boundary_2$, respectively.
The lower position boundary will be indexed with $0$, and the upper with $2$.
\subsection{The algorithm}
  \Cref{main.cpp} shows in principle, how the strategy to approximate the solution of the direct problem was implemented in \FEniCS.
\begin{algorithm}[H]
\caption{}\label{main.cpp}
\begin{algorithmic}[1]
\State{\textbf{set} $\timevariable_0$}\Comment{Set initial time}
\State{\textbf{set} $\displacements_{h,0}$}\Comment{Set initial displacements}
\State{\textbf{set} $\damage_{h,0}$}\Comment{Set initial damage}
\While{$\timevariable_n<\timedomainmax$}
  \State{$\damage_{\discretizationInSpace, \discretizationInTime,n}^i\gets$ \textbf{solve} \textnormal{equation \eqref{equation:Explicit_Euler}}}\Comment{Solve damage equation with \emph{explicit Euler scheme}}
  \State{$\damage_{\discretizationInSpace, \discretizationInTime,n}\gets \sum_{i=1}^{N_h}\damage_{\discretizationInSpace, \discretizationInTime,n}^i\varphi_{\damagefunctions_h}^i,$}\Comment{Compare to equation \eqref{equation:damage in FE space}}
  \State{$\displacements_{\discretizationInSpace, \discretizationInTime,n} \gets$ \textbf{solve} \textnormal{equation \eqref{equation:discretized EqOfMo}}}
  \State{$\boldsymbol{w}_{\discretizationInSpace, \discretizationInTime,n}\gets$ \textbf{solve} \textnormal{equation \eqref{equation:Projection of Displacementgradient}}}\Comment{$\integrablefunctions^2$-Projection of $\nabla\displacements_{\discretizationInSpace, \discretizationInTime}$}
  \State{$\timevariable_{n+1}\gets\timevariable_{n}+\Delta\timevariable$}
\EndWhile
\end{algorithmic}
\end{algorithm}
We start by setting the initial time step $\timevariable_0$, $\displacements_{\discretizationInSpace, \discretizationInTime, 0}$, and $\damage_{\discretizationInSpace, \discretizationInTime, 0}$.
In every time step, we then solve the damage equation using the displacements from the previous time step.
We then project the solution onto our FE space and solve the equation of motion.
At the end, we also compute the $\integrablefunctions^2$-projection of the displacements' gradient so that it can be used to determine damage evolution in the next time step.

\section{Numerical simulations}\label{sec:3}
We composed several test cases to assess our model performance from the perspective of numerical simulations.
These test cases are listed in Table \ref{Table:TestCases}.
We start by taking a look at convergence rates. This is the role of  Subsection \ref{sec:tc00}. We compare the obtained numerical results to the theoretical ones proven in Section \ref{sec:numerical_analysis}.
Test case \texttt{TC01} compares the Kachanov-type model output with a damage free setting by applying a linear in time increasing load.
The results are shown and discussed in Subsection \ref{sec:tc01}.
The next test case \texttt{TC02} then addresses the model response to a dynamic (cyclic) loading.
We compare our model output to a damage free one in Subsection \ref{sec:tc02}.
Afterwards, we pick up again the discussion from Section \ref{Sec:Corners} and discuss boundary regularity issues in Subsection \ref{sec:tc03}.
We close this section by looking  in Subsection \ref{sec:tc04} at a test case involving a substantial damage.
\begin{center}
  \captionof{table}{Overview test cases}
  {%
  \renewcommand{\arraystretch}{1.25}
  \begin{longtable}{@{\extracolsep{\fill}}ll@{\hspace{.15\textwidth}}lllll@{}} \noalign{\hrule height 1.5pt}
    {\bfseries Test case}  & {\bfseries Description}  & $\boldsymbol{\spatialdomain_i}$	 & $\boldsymbol{\damageprocess_i}$ & $\boldsymbol{y|_{\boundary_0}}$                   & $\boldsymbol{y|_{\boundary_1}}$ & $\boldsymbol{y|_{\boundary_2}}$                   \\ \noalign{\hrule height 1.5pt}
    \texttt{TC00S00}       &  Convergence in space    & $\spatialdomain_0$               & $\damageprocess_1$              & $-\stressforces_2$                                & $\stressforces_0$               & $\stressforces_2$                                 \\
    \texttt{TC00S01}       &  Convergence in time     & $\spatialdomain_0$               & $\damageprocess_1$              & $-\stressforces_2$                                & $\stressforces_0$               & $\stressforces_2$                                 \\ \noalign{\hrule}
    \texttt{TC01S00}       &  Linear load 1           & $\spatialdomain_0$               & $\damageprocess_0$              & $-\stressforces_2$                                & $\stressforces_0$               & $-\stressforces_2$                                \\
    \texttt{TC01S01}       &  Linear load 2           & $\spatialdomain_0$               & $\damageprocess_1$              & $-\stressforces_2$                                & $\stressforces_0$               & $-\stressforces_2$                                \\ \noalign{\hrule}
    \texttt{TC02S00}       &  Dynamic load 1          & $\spatialdomain_0$               & $\damageprocess_1$              & $-\stressforces_2$                                & $\stressforces_0$               & $-\stressforces_2$                                \\
    \texttt{TC02S01}       &  Dynamic load 2          & $\spatialdomain_0$               & $\damageprocess_1$              & $-\stressforces_2$                                & $\stressforces_0$               & $-\stressforces_2$                                \\ \noalign{\hrule}
    \texttt{TC03S00}       &  Dirichlet 1             & $\spatialdomain_0$               & $\damageprocess_2$              & $\boundarydisplacements_0$                        & $\stressforces_0$               & $\boundarydisplacements_2$                        \\
    \texttt{TC03S01}       &  Dirichlet 2             & $\spatialdomain_0$               & $\damageprocess_2$              & $\boundarydisplacements_0$                        & $\stressforces_0$               & $\boundarydisplacements_2$                        \\
    \texttt{TC03S02}       &  Robin                   & $\spatialdomain_0$               & $\damageprocess_2$              & $\robin,\boundarydisplacements_0,\stressforces_0$ & $\stressforces_0$               & $\robin,\boundarydisplacements_2,\stressforces_0$ \\
    \texttt{TC03S03}       &  Domain 1                & $\spatialdomain_1$               & $\damageprocess_2$              & $\boundarydisplacements_0$                        & $\stressforces_0$               & $\boundarydisplacements_2$                        \\
    \texttt{TC03S04}       &  Domain 2                & $\spatialdomain_2$               & $\damageprocess_2$              & $\boundarydisplacements_0$                        & $\stressforces_0$               & $\boundarydisplacements_2$                        \\
  \end{longtable}
  \label{Table:TestCases}
  }
\end{center}

\subsection{Convergence}\label{sec:tc00}
We test numerically the analytically established convergence rates of our  model from Section \ref{sec:numerical_analysis}.
The model is parameterized according to \texttt{TC00}.
\begin{center}
  \begin{minipage}[t]{0.49\linewidth}
    \centering
    \includegraphics{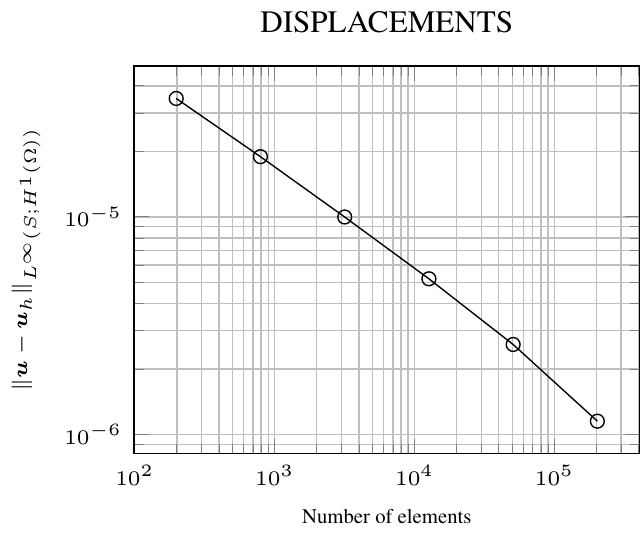}
  \end{minipage}\hfill
  \begin{minipage}[t]{0.49\linewidth}
   \centering
   \includegraphics{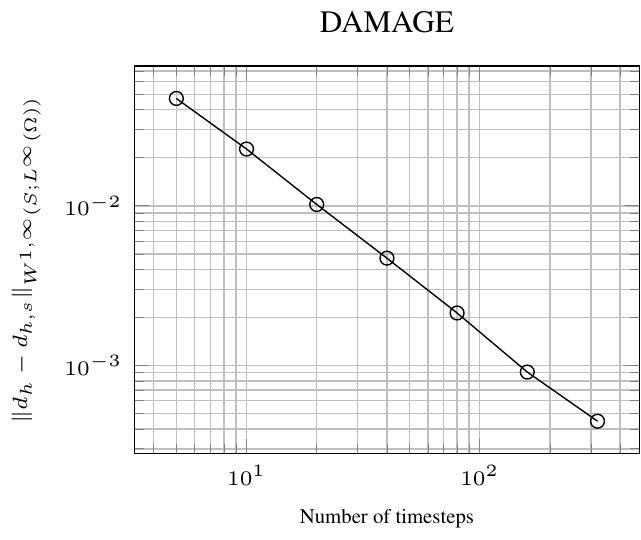}
  \end{minipage}
  \captionof{figure}{\texttt{TC00S00}. Convergence in time and space.}
  \label{figure:convergence}
\end{center}
Analytically, we established at least linear convergence of the discretization error in time and space.
This is confirmed by the results shown in Figure \ref{figure:convergence}.
The discretization error in time meets our expectations since we used an explicit Euler scheme which is of first order.
\subsection{Damage versus no damage evolution}\label{sec:tc01}
We start by looking more closely on the effect damage has on the equation of motion.
Therefore we look into linear and cyclic loading to get a feel for Kachanov-type model responses.
\subsubsection{Linear loading in time}
We tend to test cases \texttt{TC01} and \texttt{TC02}, where we impose symmetric linear and dynamic forces on boundaries $\boundary_0$ and $\boundary_2$, respectively.
Our goal is to observe a different of response when comparing these to outputs of damage free models.
The results for a damage free model are displayed in Figure \ref{figure:results TC01S00}.
  \begin{center}
   \begin{minipage}[t]{0.33\linewidth}
    \centering
    \includegraphics[trim=300 100 200 75, clip,width=\linewidth]{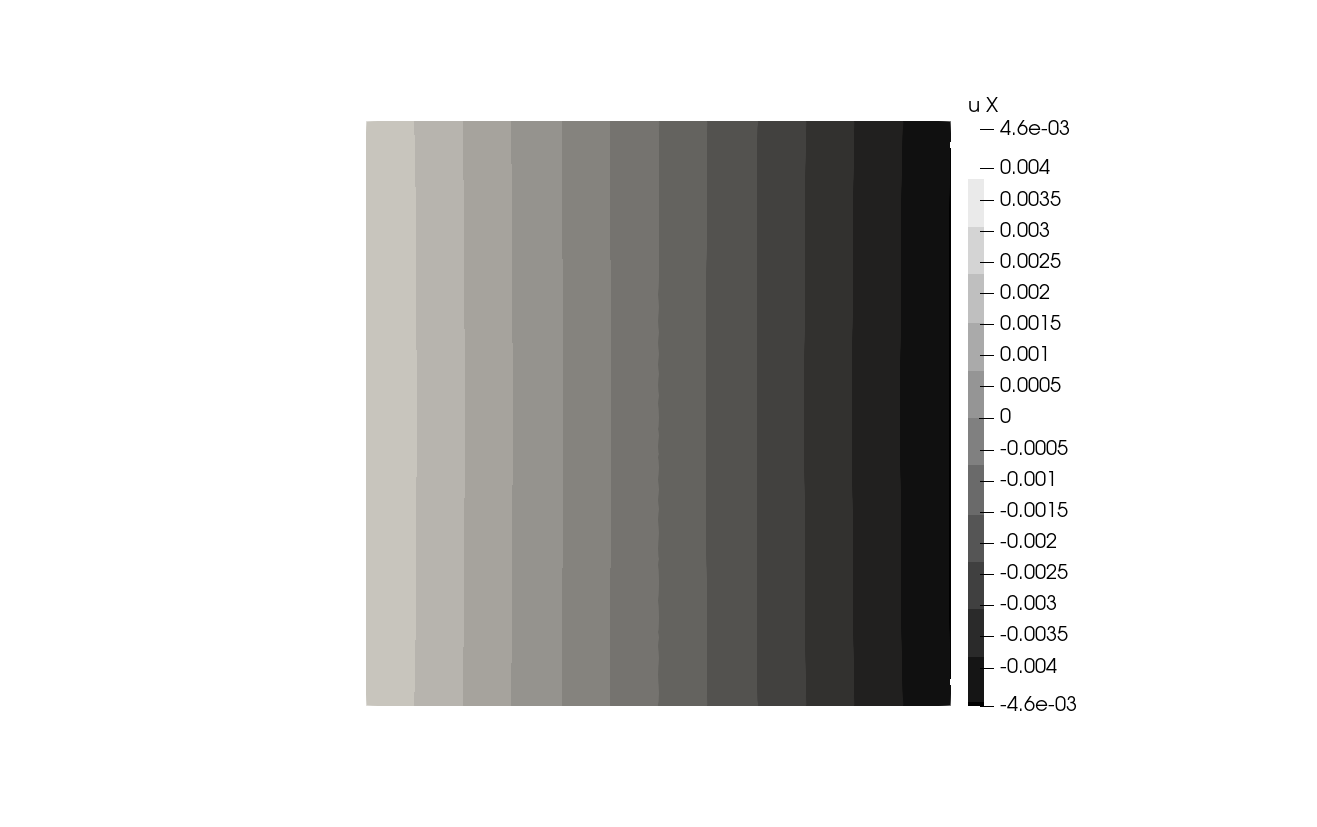}  
   \end{minipage}\hfill
   \begin{minipage}[t]{0.33\linewidth}
    \centering
    \includegraphics[trim=300 100 200 75, clip,width=\linewidth]{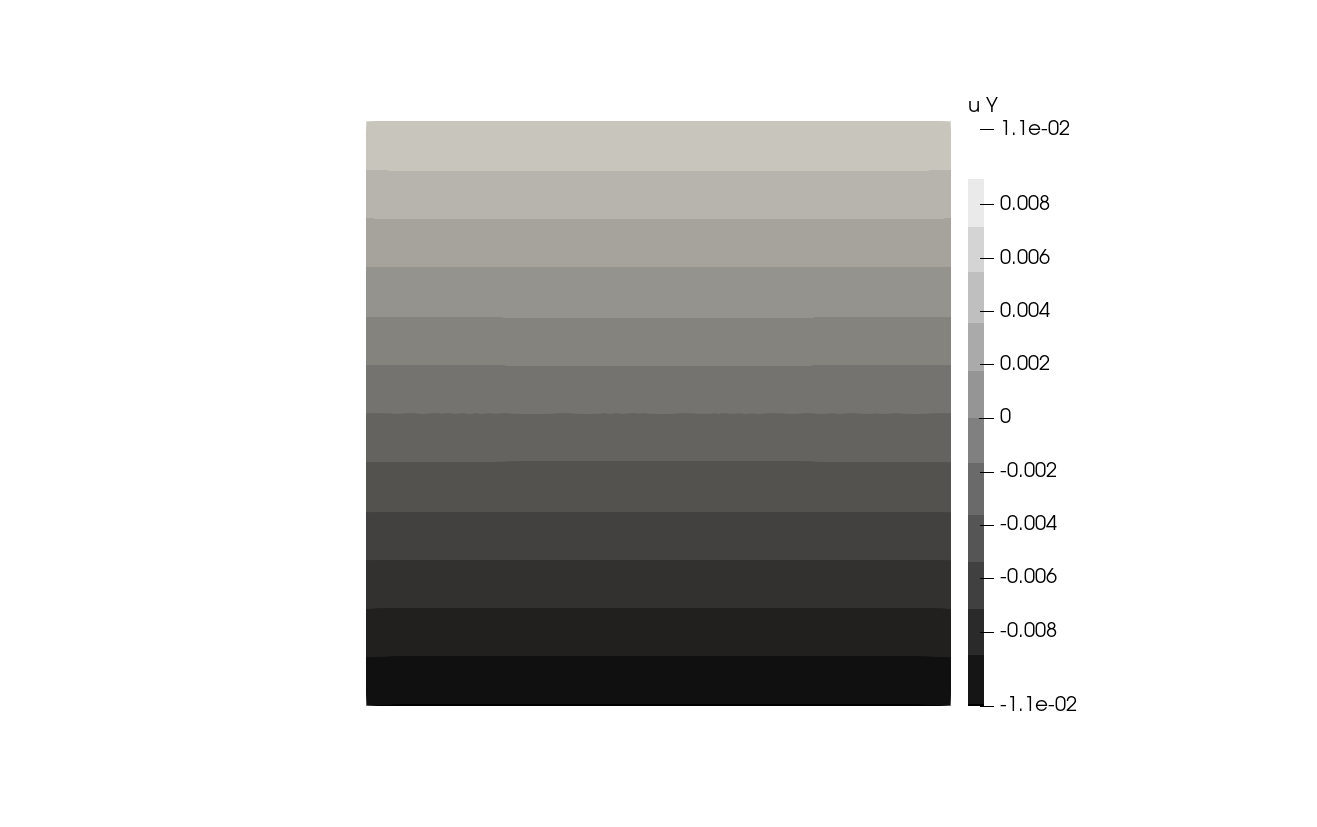}  
   \end{minipage}\hfill
   \begin{minipage}[t]{0.33\linewidth}
    \centering
    \includegraphics[trim=300 100 200 75, clip,width=\linewidth]{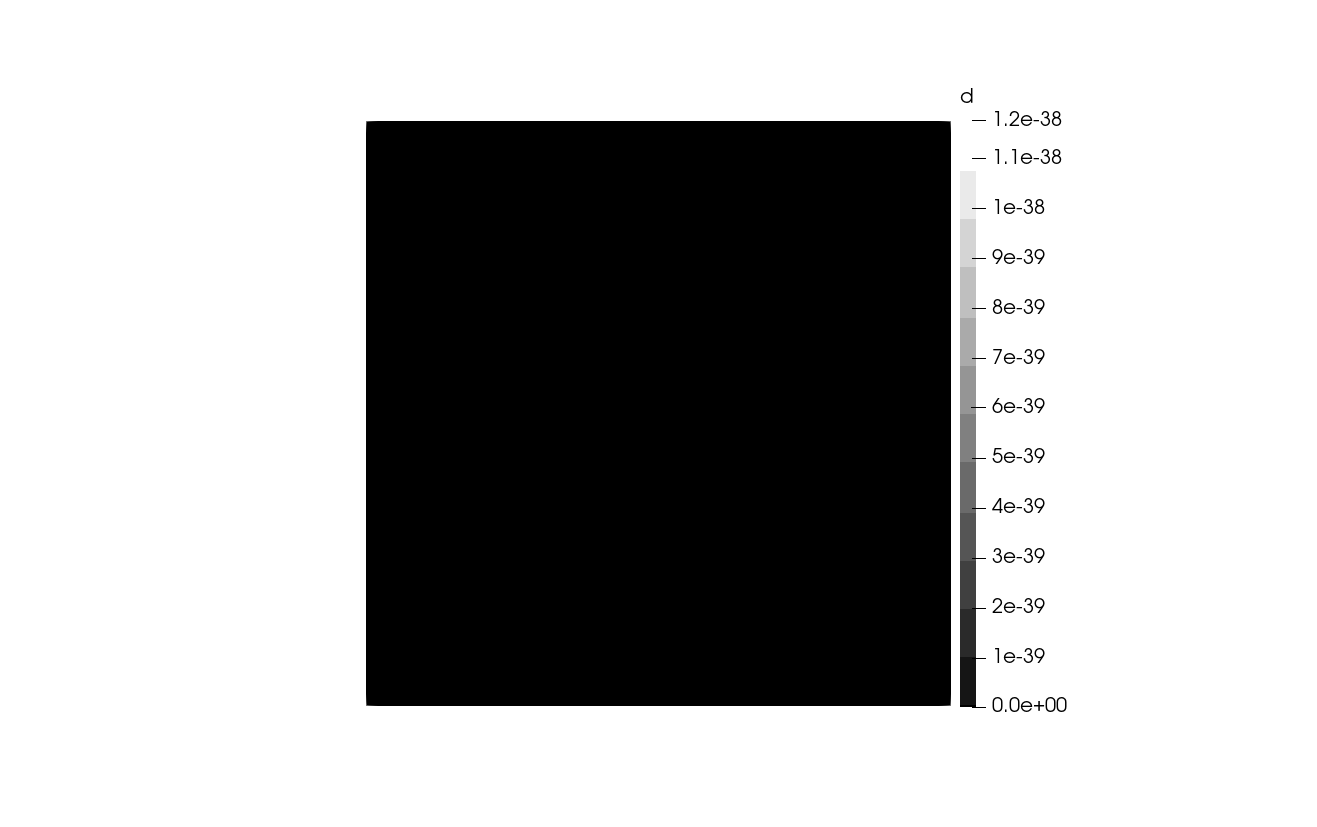}  
   \end{minipage}
   \captionof{figure}{\texttt{TC01S00}. Displacements in $X$- and $Y$-direction, damage after $10$ seconds.}
   \label{figure:results TC01S00}
  \end{center}
  A fact that is immediately apparent is that damage appears to be constant with value zero over space since it was set to zero for every spatial position and all steps in time.
  Thus the model should respond in every time step like a static equation of motion with respective data.
  This seems to be the case if one is looking at the displacements in $X$- and $Y$-direction. We observe symmetric displacements in both directions.
\begin{center}
  \begin{minipage}[t]{0.33\linewidth}
    \centering
    \includegraphics[trim=300 100 200 75, clip,width=\linewidth]{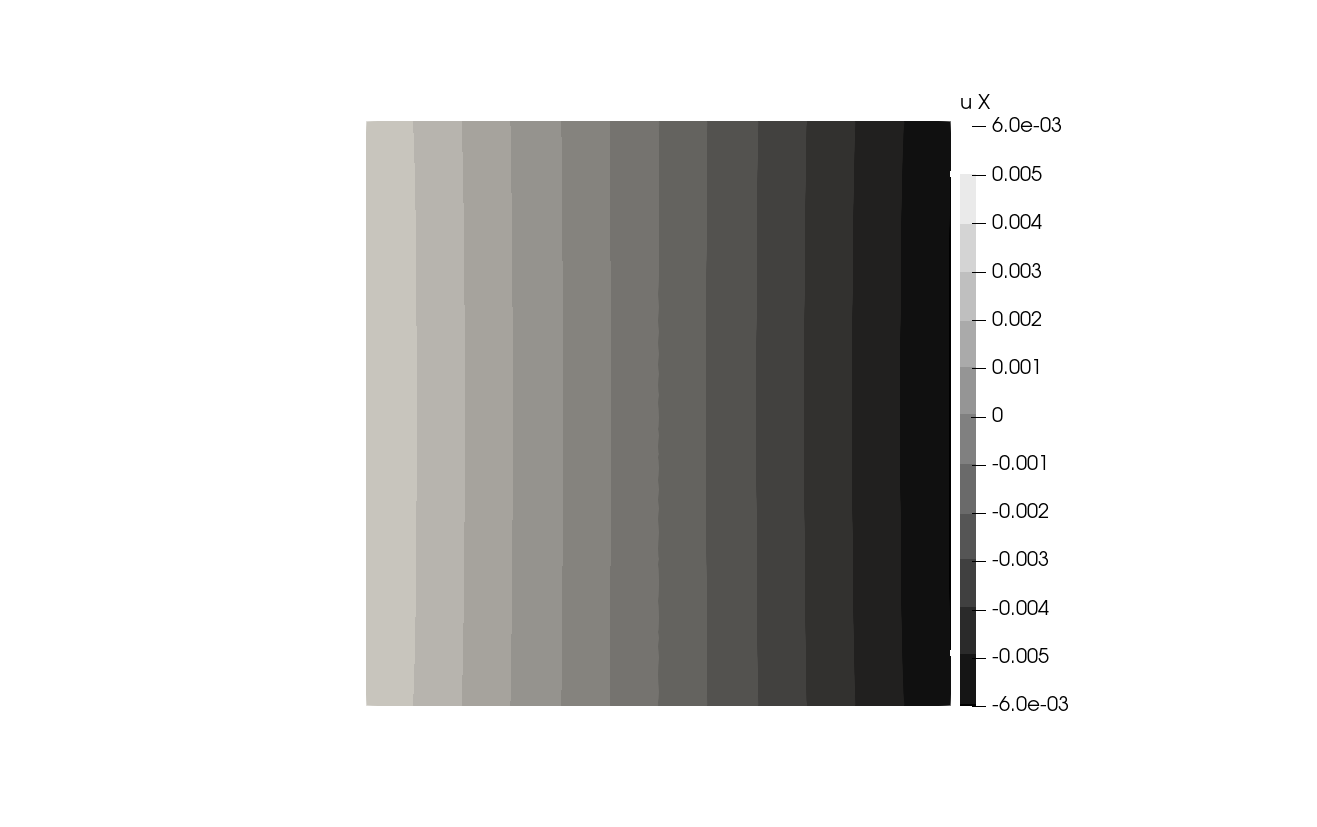}  
  \end{minipage}\hfill
  \begin{minipage}[t]{0.33\linewidth}
    \centering
    \includegraphics[trim=300 100 200 75, clip,width=\linewidth]{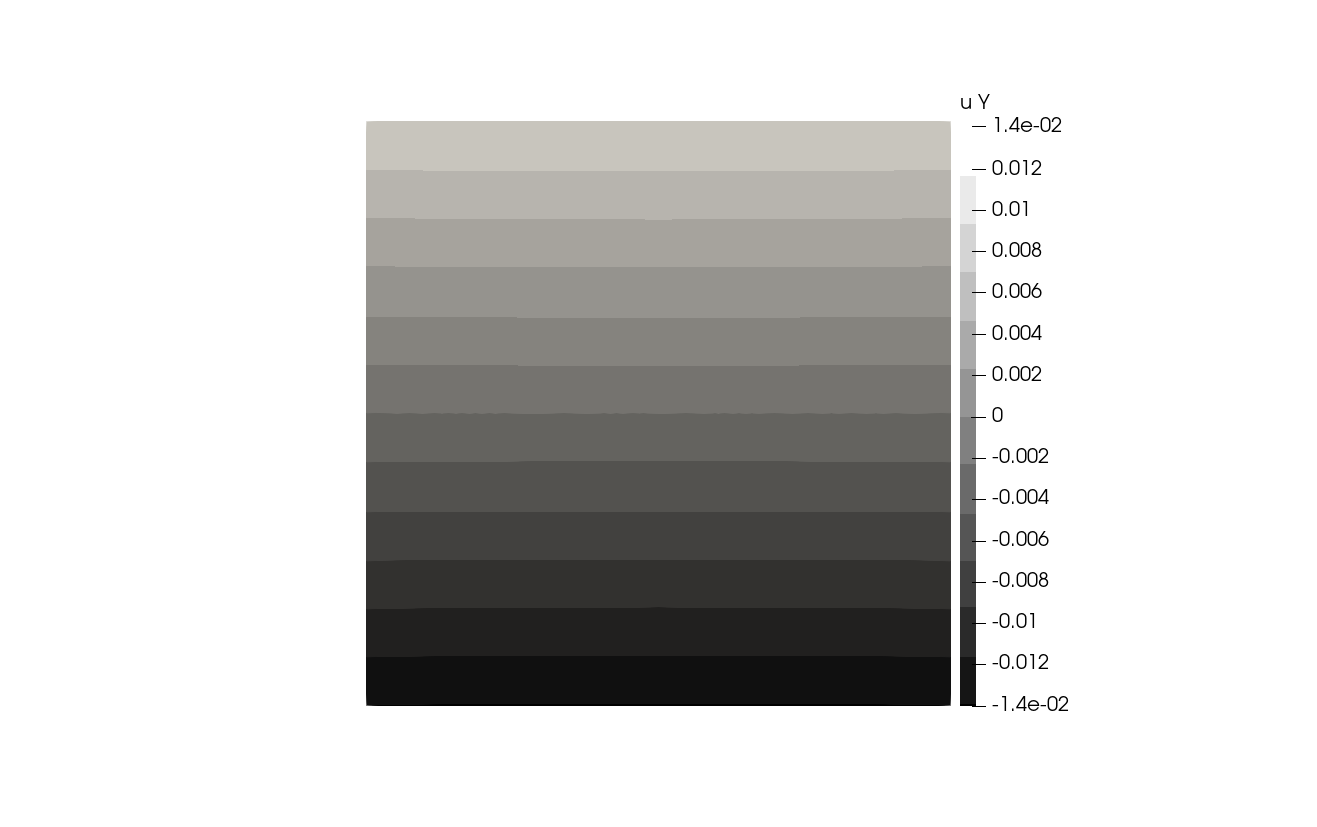}  
  \end{minipage}\hfill
  \begin{minipage}[t]{0.33\linewidth}
    \centering
    \includegraphics[trim=300 100 200 75, clip,width=\linewidth]{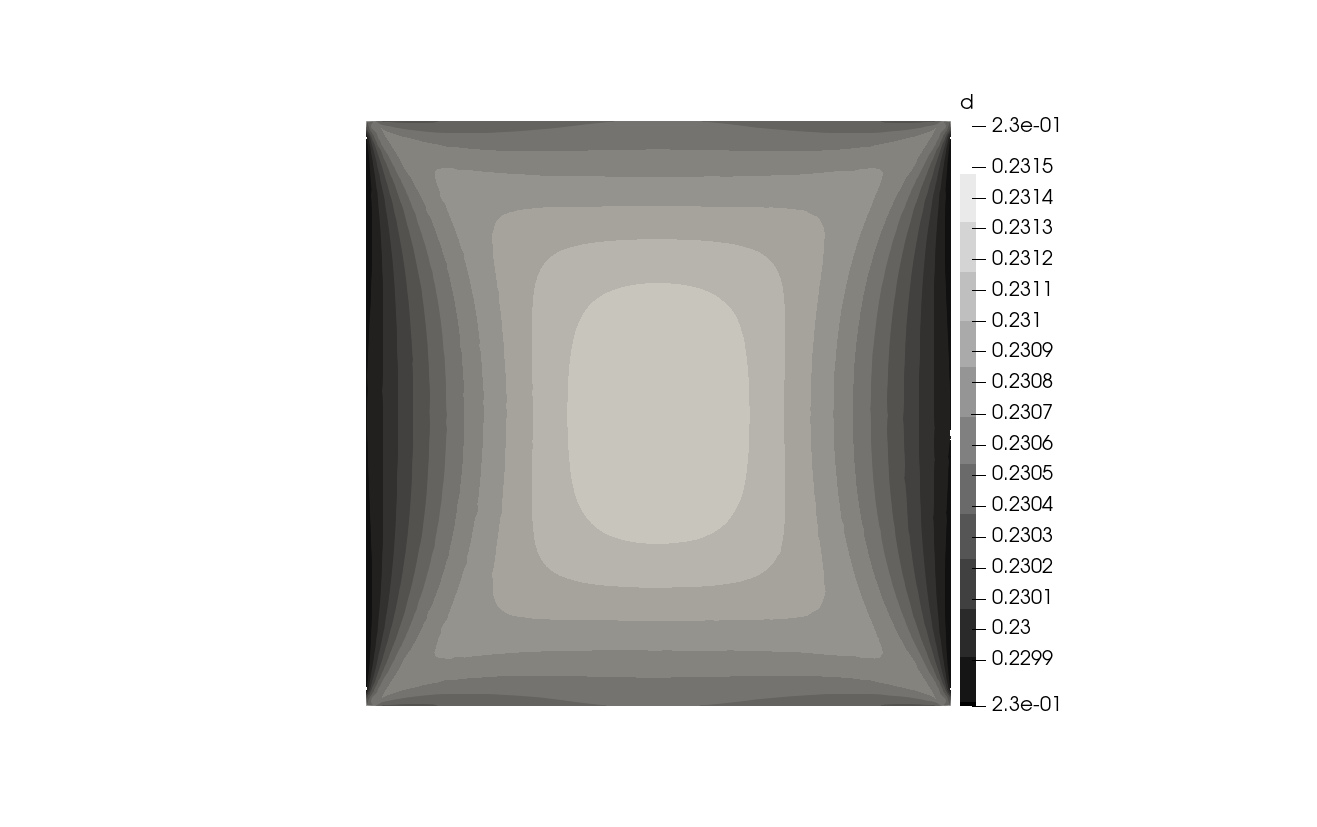}  
  \end{minipage}
  \captionof{figure}{\texttt{TC01S01}. Displacements in $X$- and $Y$-direction, damage after $10$ seconds.}
  \label{figure:results TC01S01}
\end{center}
Pulling on the top and lower ends with a force of same magnitude but opposite direction causes displacements $\displacements_Y$ as shown in the respective image.
The fact that $\displacements_Y(\spatialvariables)=-\displacements_Y(\boldsymbol{y})$ for all $\spatialvariables,\boldsymbol{y}\in\spatialdomain_0$ where $\spatialvariable_2,-y_2=\constant$ for $\constant\in[0,1]$ agrees with our physical expectation.
The same holds true for $\displacements_X$, i.e., $\displacements_X(\spatialvariables) = - \displacements_X(\boldsymbol{y})$ with $\spatialvariable_1,-y_1=\constant$.\\[2ex]
Figure \ref{figure:results TC01S01} show the fully-coupled model response applying the same data as in \texttt{TC01S00}.
The first thing that catches the eye is that damage is non zero.
Looking at the scale of the respective plot, we can see, that damage is practically constant over the spatial domain.
From a physical perspective this makes sense.
The material is in a homogenous state of stress.
According to Kachanov's model, stress is the root cause for damage eveolution in a homogeneous material, so damage should be homogeneous, too.\\[2ex]
We also see that the resposne in view of displacements exhibits the same characteristics as the one without damage evolution in \texttt{TC01S00} but the magnitudein displacements has increased.
Since damage is reducing the load-carrying capacity of the material the same force applied causes higher stresses and, thus, displacements in $X$- and $Y$-directions are of higher magnitude.
\begin{center}
  \begin{minipage}[t]{0.49\linewidth}
    \centering
    \includegraphics{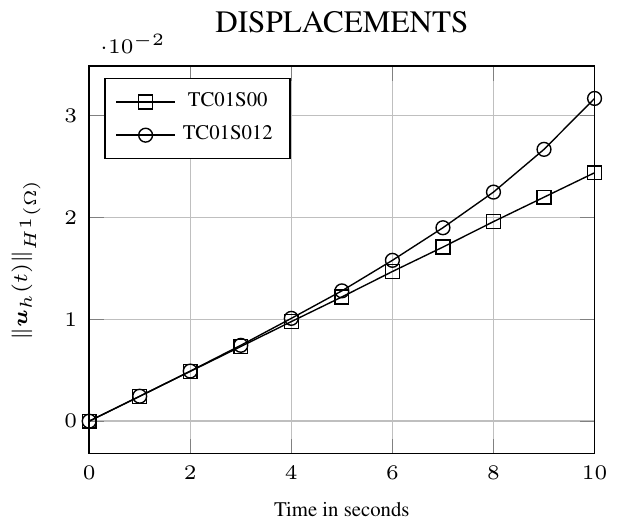}
  \end{minipage}\hfill
  \begin{minipage}[t]{0.49\linewidth}
    \centering
    \includegraphics{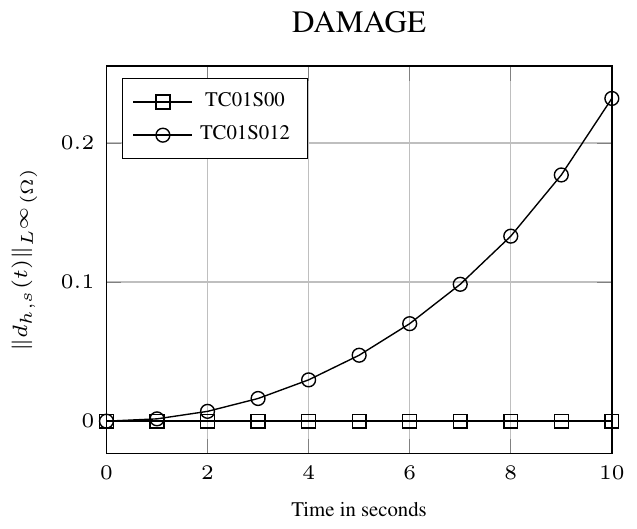}
  \end{minipage}
  \captionof{figure}{\texttt{TC01}. Comparing results.}
  \label{figure:comparing results TC01}
  \end{center}
We take a look in Figure \ref{figure:comparing results TC01} at the evolution of the respective norms over time.
The displacements measured in $\spatialhilbertfunctions^1$-norm of the damage-free model show a linear response to the applied linear force.
This agrees with the model assumptions.\\[2ex]
We tend to the Kachanov-like model, i.e., \texttt{TC01S01}.
Here, a nonlinear behavior in damage and in displacements can be observed.
As the load-carrying capacity is updated every step in time, the response of the equation of motion to the same forces changes and larger displacements and gradients are the result of that.
\subsubsection{Periodic loading in time}\label{sec:tc02}
We discuss the results from \texttt{TC02S00} and \texttt{TC02S01}.
In both these cases, we subjected the model to a load that is periodic in time.
\begin{center}
  \begin{minipage}[t]{0.33\linewidth}
    \centering
    \includegraphics[trim=300 100 200 75, clip,width=\linewidth]{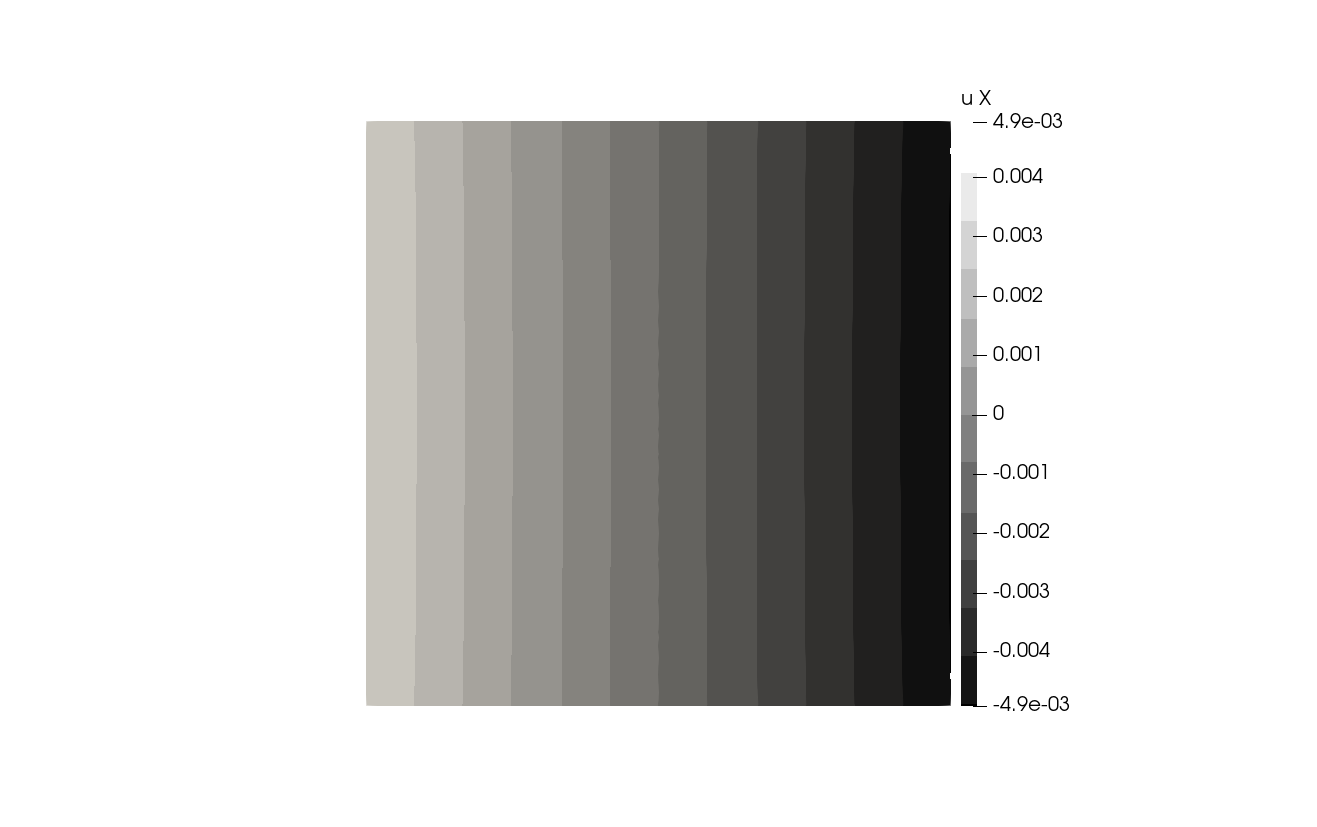}  
  \end{minipage}\hfill
  \begin{minipage}[t]{0.33\linewidth}
    \centering
    \includegraphics[trim=300 100 200 75, clip,width=\linewidth]{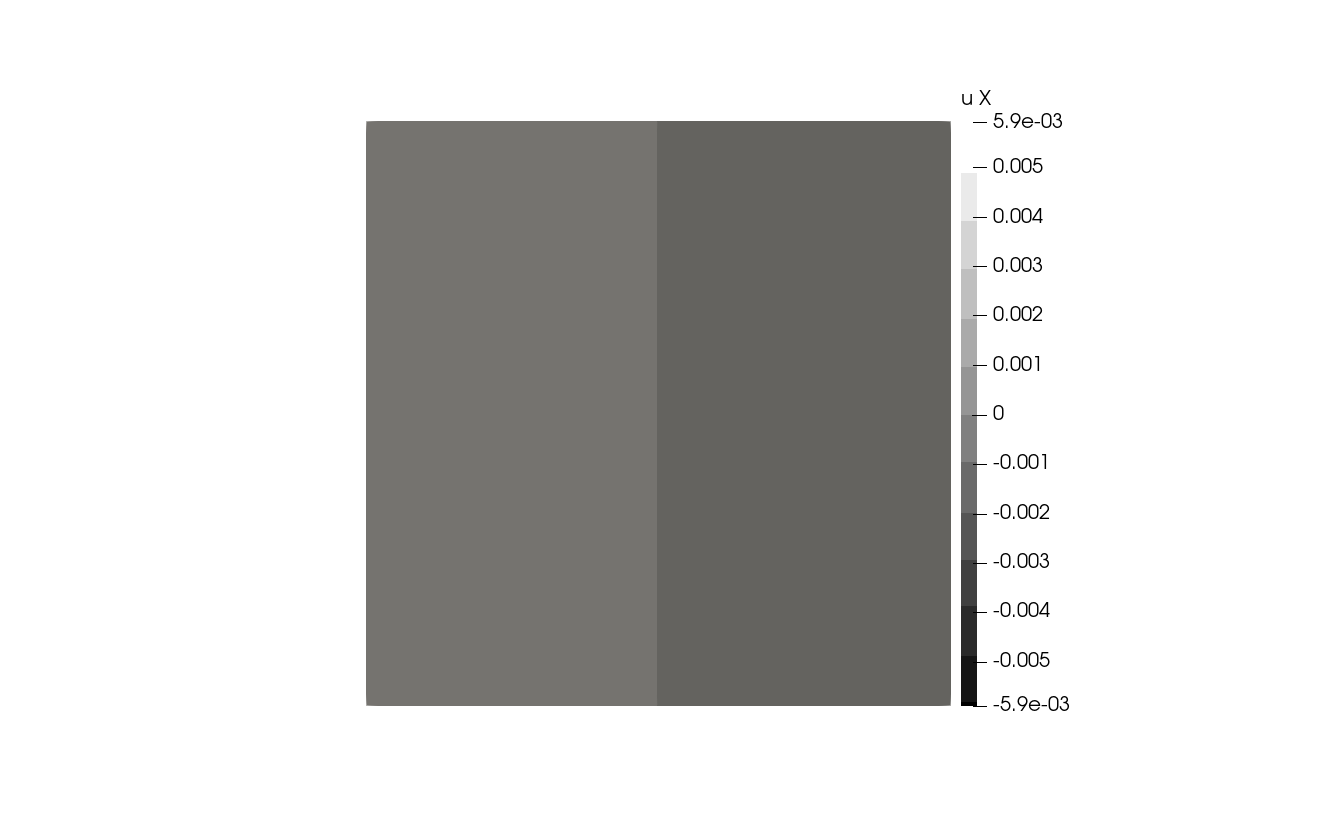}  
  \end{minipage}\hfill
    \begin{minipage}[t]{0.33\linewidth}
    \centering
  \includegraphics[trim=300 100 200 75, clip,width=\linewidth]{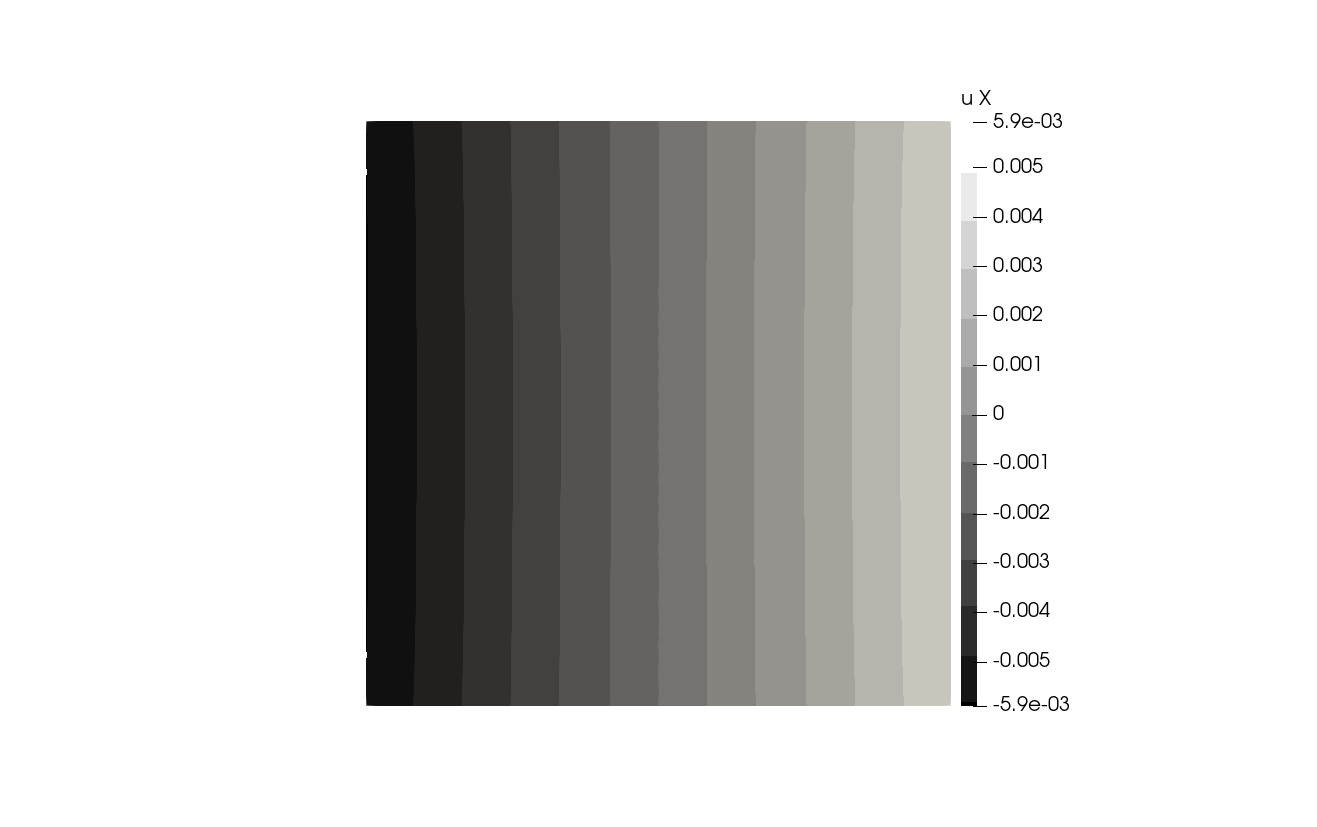}  
  \end{minipage}
  \captionof{figure}{\texttt{TC02S01}. Displacements in $X$- and $Y$-direction, damage after $2.5$, $5.0$, and $7.5$ seconds.}
  \label{figure:results TC02S00 in X}
\end{center}
Both models show qualitatively the same response as in \texttt{TC01}.
Keeping the shape of the dynamic load in mind, the force emposes a tension phase followed by a compression phase.
We can see in Figure \ref{figure:results TC02S00 in X} that displacements $\displacements_X$ show the anticipated behavior.
\begin{center}
  \begin{minipage}[t]{0.33\linewidth}
    \centering
    \includegraphics[trim=300 100 200 75, clip,width=\linewidth]{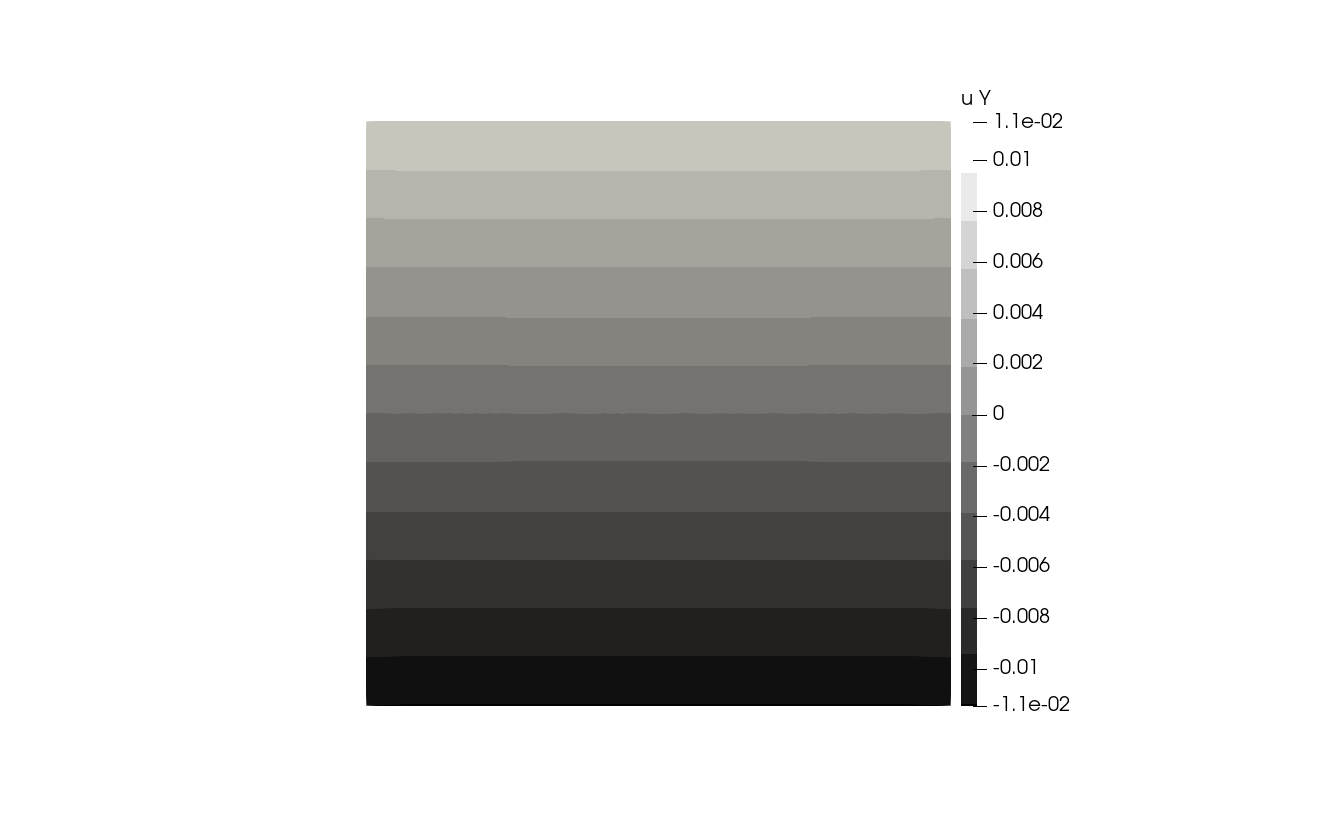}  
  \end{minipage}\hfill
  \begin{minipage}[t]{0.33\linewidth}
  \centering
    \includegraphics[trim=300 100 200 75, clip,width=\linewidth]{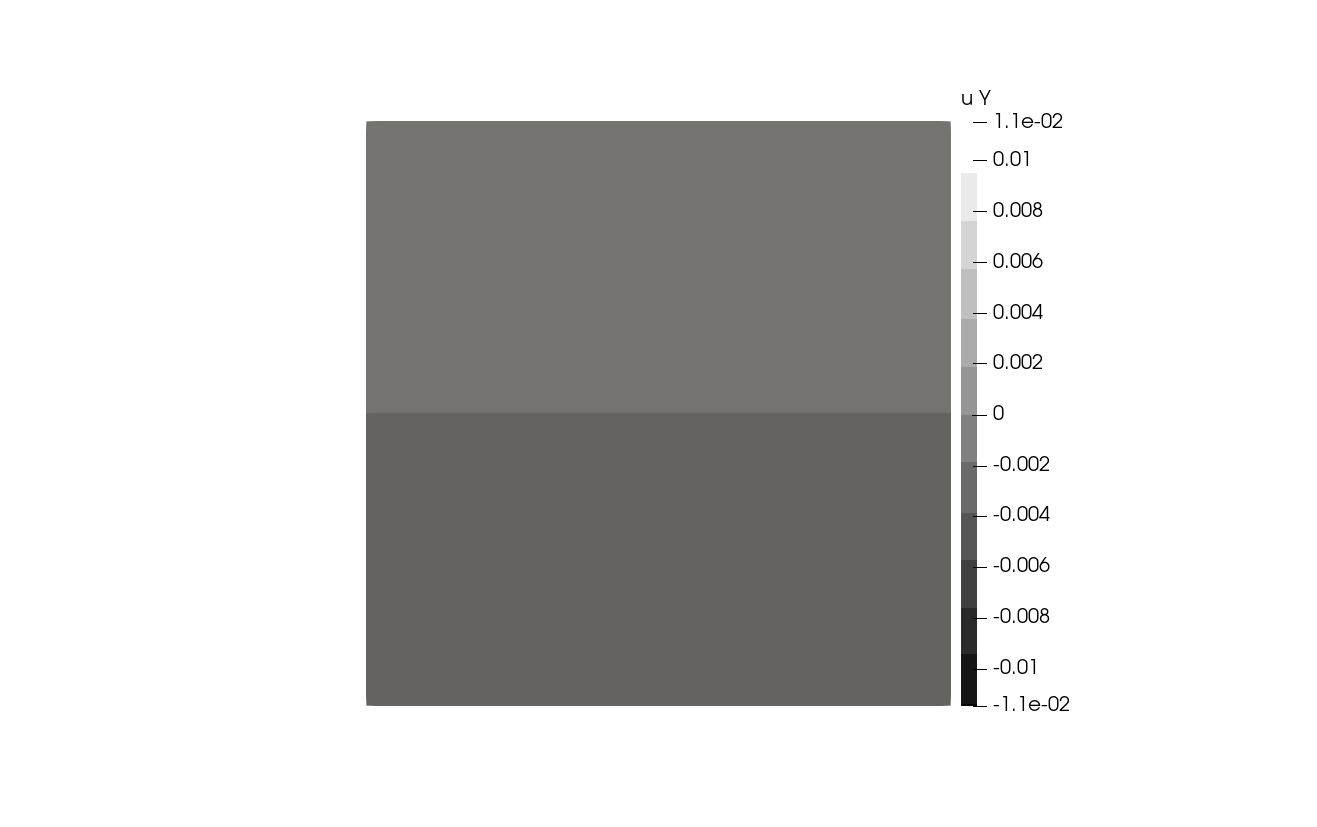}  
    \end{minipage}\hfill
  \begin{minipage}[t]{0.33\linewidth}
    \centering
    \includegraphics[trim=300 100 200 75, clip,width=\linewidth]{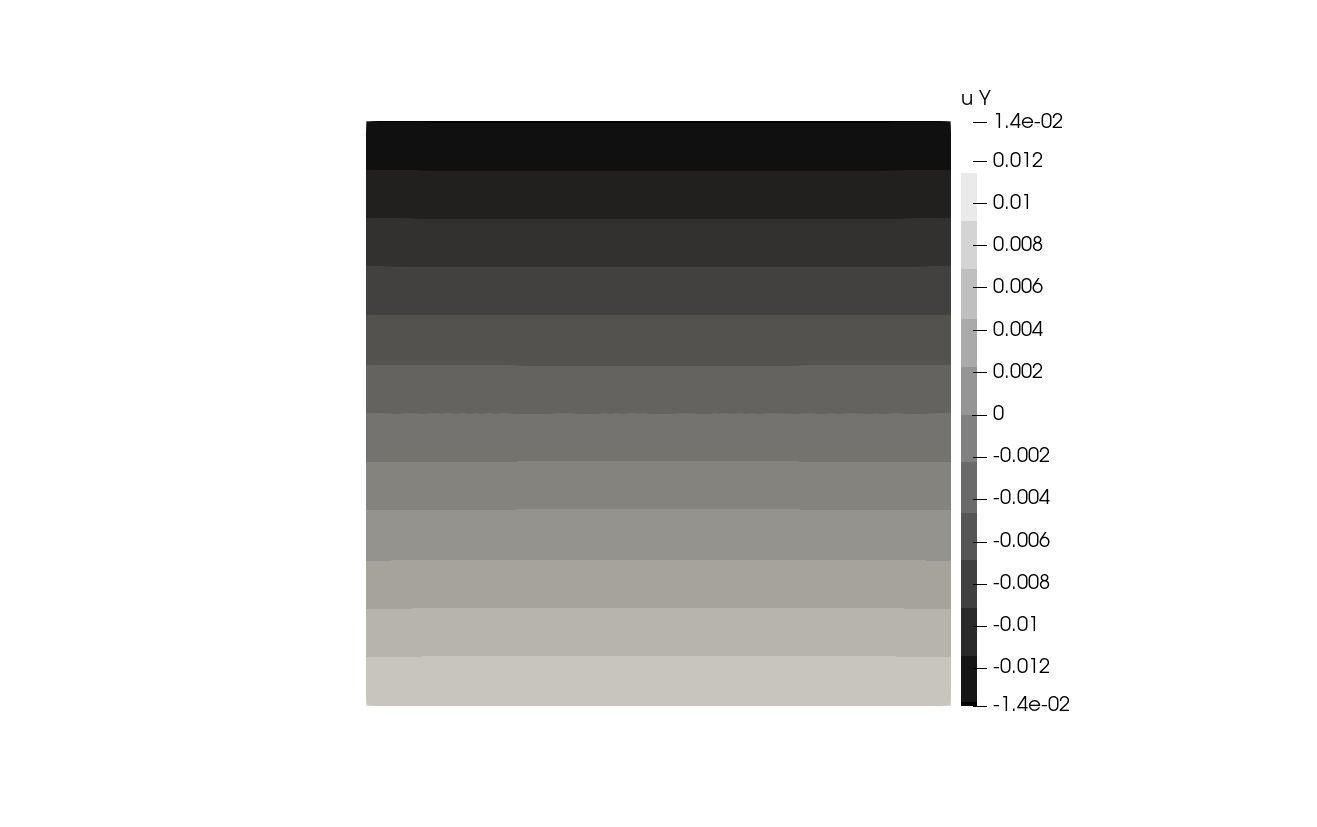}  
  \end{minipage}
  \captionof{figure}{\texttt{TC02S01}. Displacements in $X$- and $Y$-direction, damage after $2.5$, $5.0$, and $7.5$ seconds.}
  \label{figure:results TC02S01 in Y}
\end{center}
During tension the material points move towards the axis in timestep $\timevariable=2.5$, decrease in the tension's realese phase to zero in $\timevariable=5.0$, and move away from the middle axis during compression at $\timevariable=7.5$.
Qualitatively, the same behavior can be observed for displacements $\displacements_Y$.
Keeping in mind that our equation of motions models a linear setting and does not account for plasticity, this is the model response we expect physically.
Note that we did not present the respective results from the damage-free response in \texttt{TC02S00} as this only differs in magnitude from case \texttt{TC02S01}.
\begin{center}
  \begin{minipage}[t]{0.49\linewidth}
   \centering
   \includegraphics{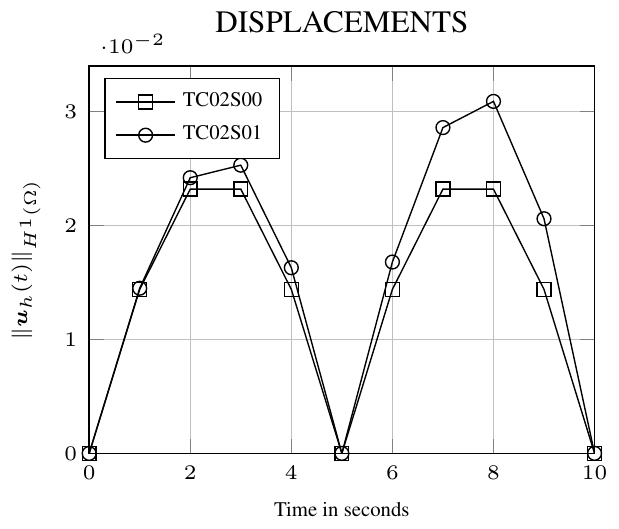}
  \end{minipage}\hfill
  \begin{minipage}[t]{0.49\linewidth}
   \centering
   \includegraphics{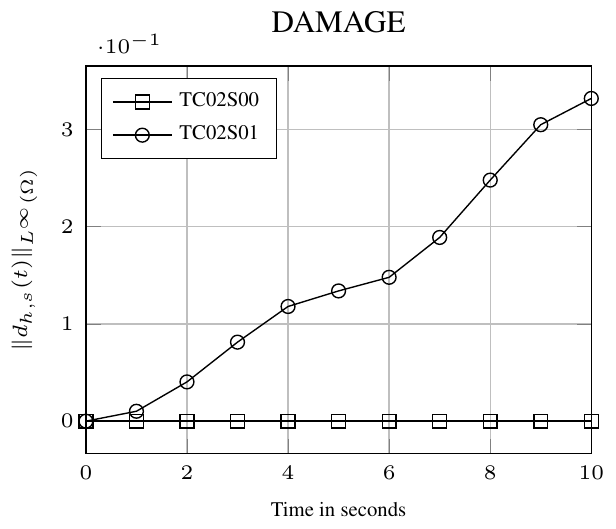}
  \end{minipage}
  \captionof{figure}{Comparing results \texttt{TC02}.}
  \label{figure:comparing results TC02}
\end{center}
Looking at the norm plots in Figure \ref{figure:comparing results TC02} we see that damage has now a different non linear shape.
This is again due to the fact that damage is only driven by stresses.
Applying a periodic force causes periodic stresses.
As these transfer directly to the damage evolution, we see the response shown on the left image in Figure \ref{figure:comparing results TC02}.
Note that damage is modeled as a non-decreasing variable.\\[2ex]
Keeping damage evolution in mind and with it the reduced load-carrying capacity over time, the left plot in Figure \ref{figure:comparing results TC02} immediately agrees with our physical expectation.
The increased damage causes higher displacements at the same level of force.
This is why the $\spatialhilbertfunctions^1$-norm of the displacements for the fully coupled model is higher than in the damage-free setting and also further increases during cyclic loading.
\subsection{Singularities}\label{sec:tc03}
We pick up our discussion on boundary regularity that we started in the first part of this work and to which we contributed with physically motivated examples in Section \ref{Sec:Corners}.
We first discuss the effect this has on the Kachanov-type model.
Afterwards, we shed some light on the proposed options to remedy the situation.
Namely, changing the type of boundary conditions or the computational spatial domain.
Note that we chose the same values for boundary stresses and boundary displacements for all of the following test cases.
To keep the cases comparable, these values had to be high enough to show the desired effect.
Therefore substantial damage was reached in almost all cases before $10$ seconds were reached.
\subsubsection{Dirichlet-type boundary conditions}\label{sec:tc03_1}
Figure \ref{figure:results TC03S01} shows the response of the fully coupled model with homogeneous Dirichlet-type boundary conditions on the bottom and linearly increasing displacements on the top end, i.e., $\displacements=\boundarydisplacements_2$ on $\boundary_2$.
\begin{center}
  \begin{minipage}[t]{0.33\linewidth}
   \centering
   \includegraphics[trim=300 100 200 75, clip,width=\linewidth]{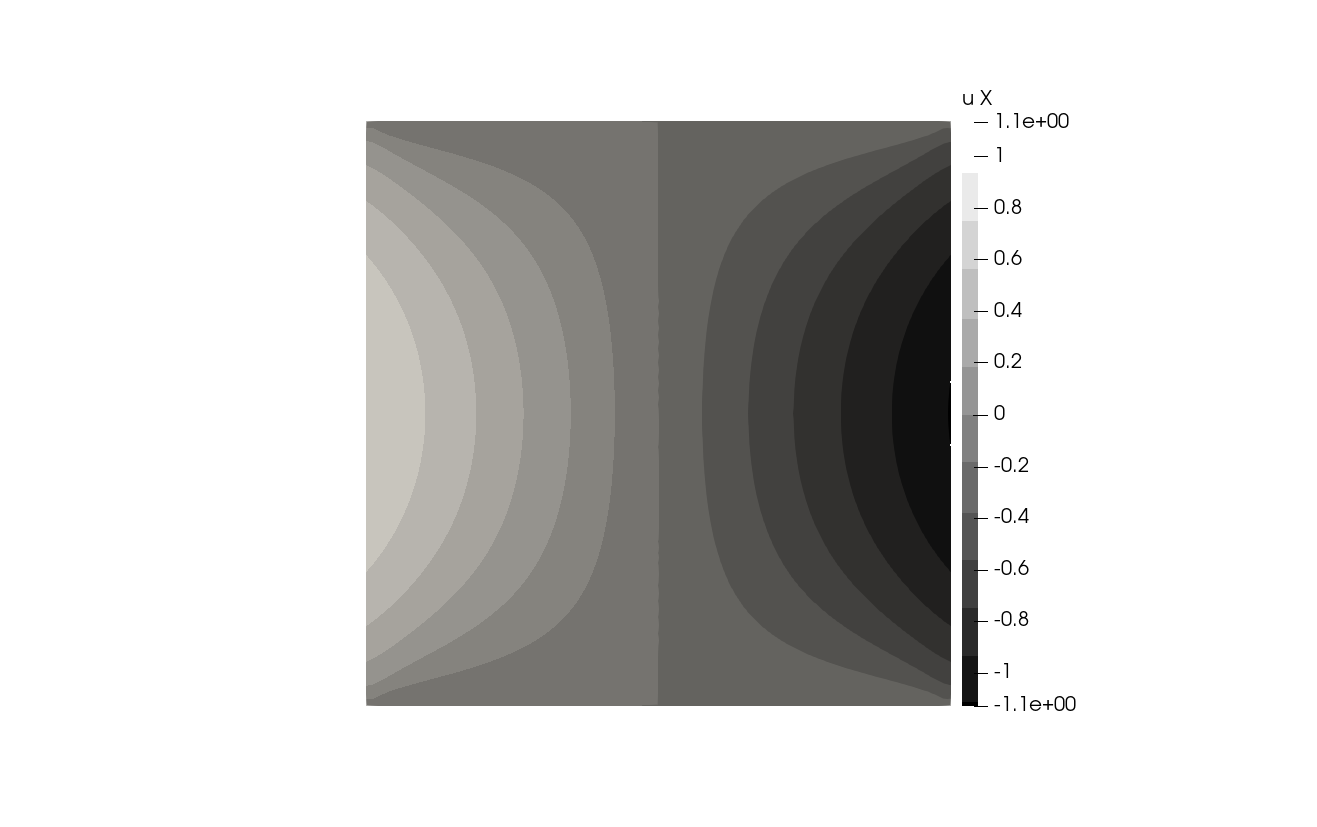}  
  \end{minipage}\hfill
  \begin{minipage}[t]{0.33\linewidth}
   \centering
   \includegraphics[trim=300 100 200 75, clip,width=\linewidth]{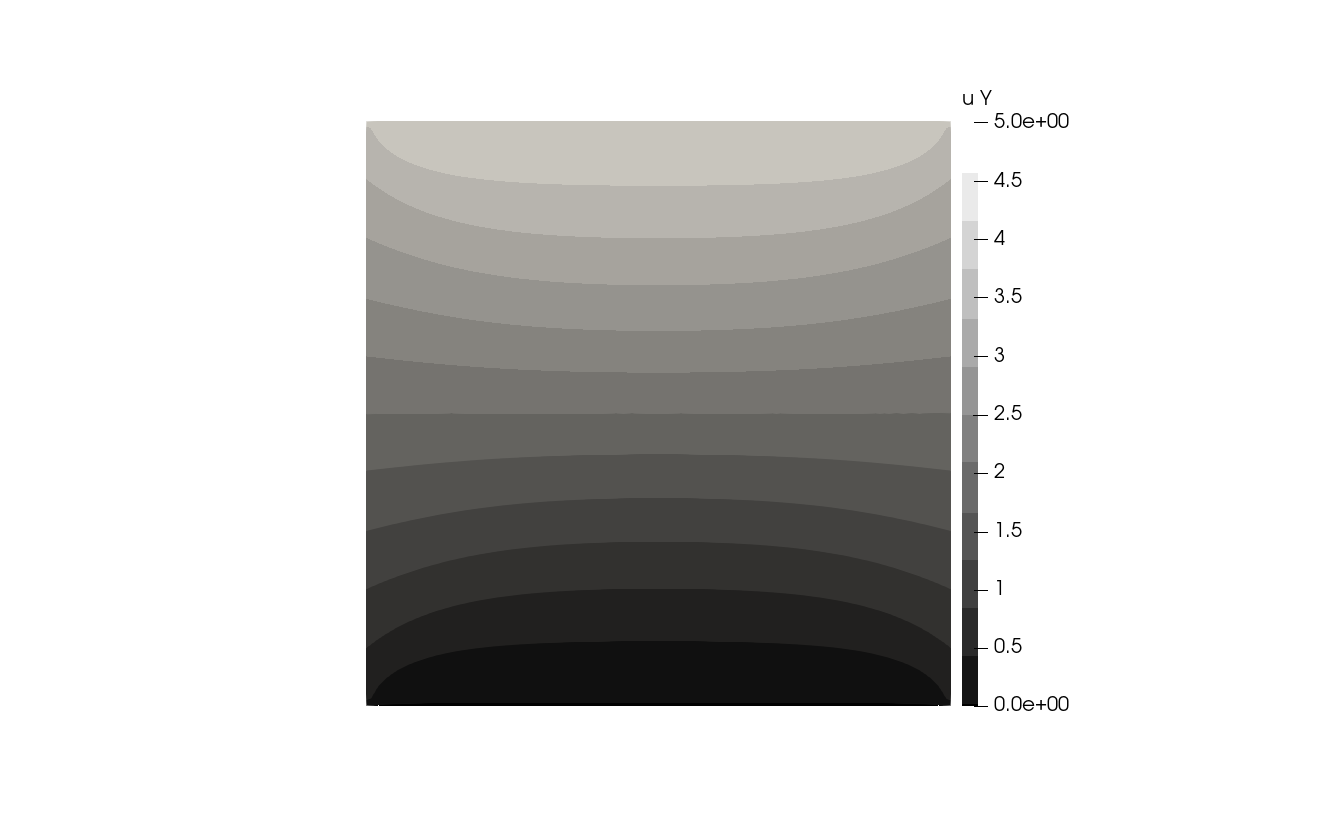}  
  \end{minipage}\hfill
  \begin{minipage}[t]{0.33\linewidth}
   \centering
   \includegraphics[trim=300 100 200 75, clip,width=\linewidth]{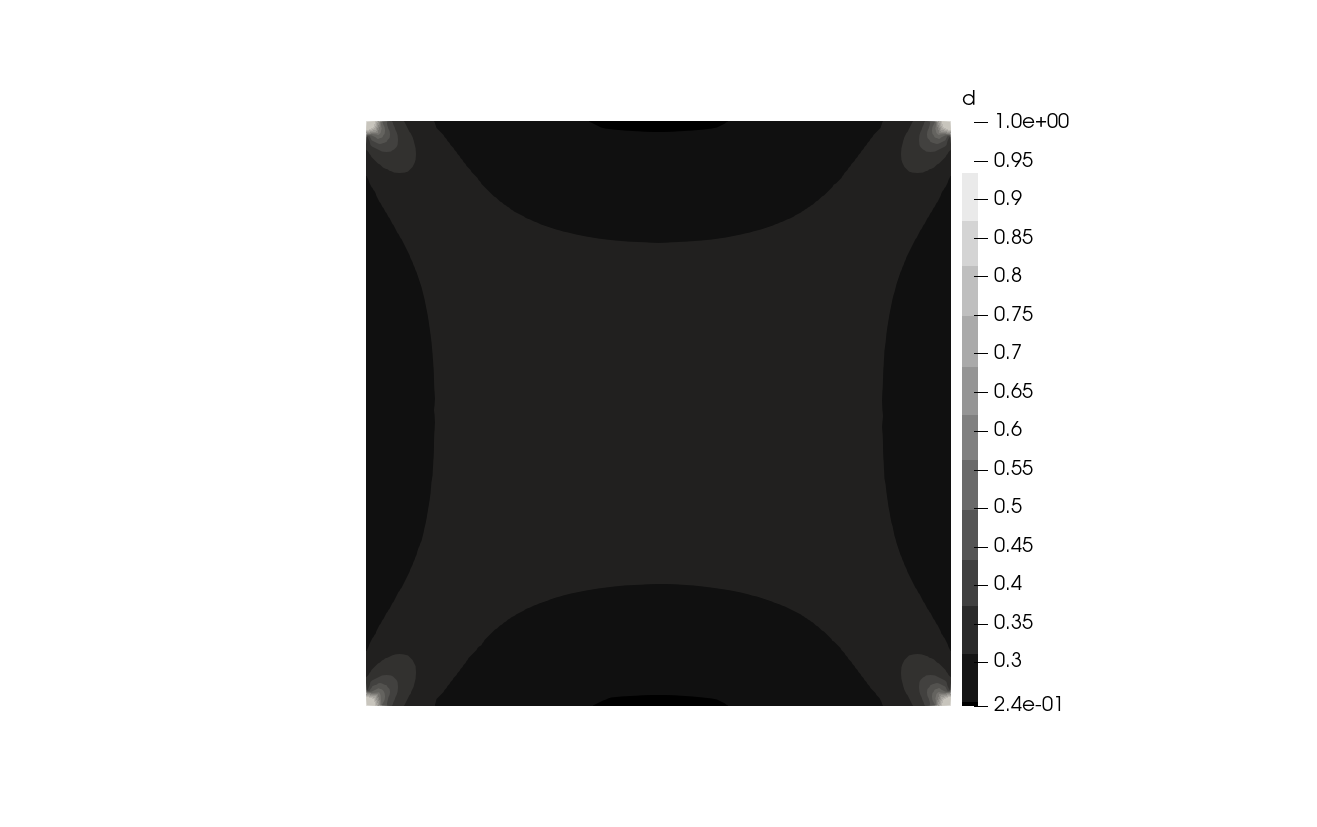}  
  \end{minipage}
  \captionof{figure}{\texttt{TC03S01}. Displacements in $X$- and $Y$- direction, damage after $9.8$ seconds.}
  \label{figure:results TC03S01}
\end{center}
The distribution of displacements vastly differs from the ones we looked at before.
Although the neutral axis is the same as before, the fixed displacements on $\boundary_0$ and $\boundary_2$ cause a form of necking that is different to the one we saw before.
Particularly, the observed symmetry in the previous test cases is no longer present.
Looking at the spatial distribution of damage, we see that the maximum is obtained in the domain's corners (see Section \ref{Sec:Corners}).
The damage is substantially higher and we now longer have a homogeneous spatial distribution.
\begin{center}
  \begin{minipage}[t]{0.49\linewidth}
   \centering
   \includegraphics{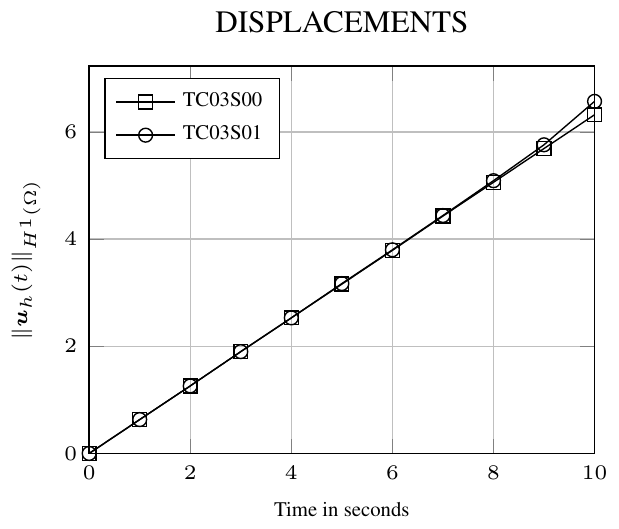}
  \end{minipage}\hfill
  \begin{minipage}[t]{0.49\linewidth}
   \centering
   \includegraphics{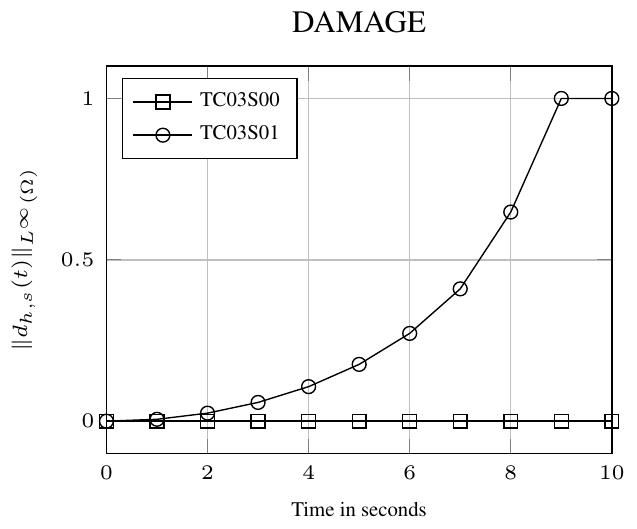}
  \end{minipage}
  \captionof{figure}{Comparing results \texttt{TC03}}
  \label{figure:comparing results TC03}
\end{center}
These high stresses are very localized.
This causes damage to rapidly increase if measured with a $\integrablefunctions^{\infty}$-norm, see Figure \ref{figure:comparing results TC03} on the right.
But as we can see on the left side, this has barely any effect on the displacements.
The clear nonlinear response from the previous test cases is barely visible.
\subsubsection{Robin-type boundary conditions}
In Figure \ref{figure:results TC03S02} we can see the effect of Robin-type boundary conditions.
The results are somewhat inbetween the responses provided by applying Dirichlet- or Neumann-type boundary conditions.
\begin{center}
  \begin{minipage}[t]{0.33\linewidth}
   \centering
   \includegraphics[trim=300 100 200 75, clip,width=\linewidth]{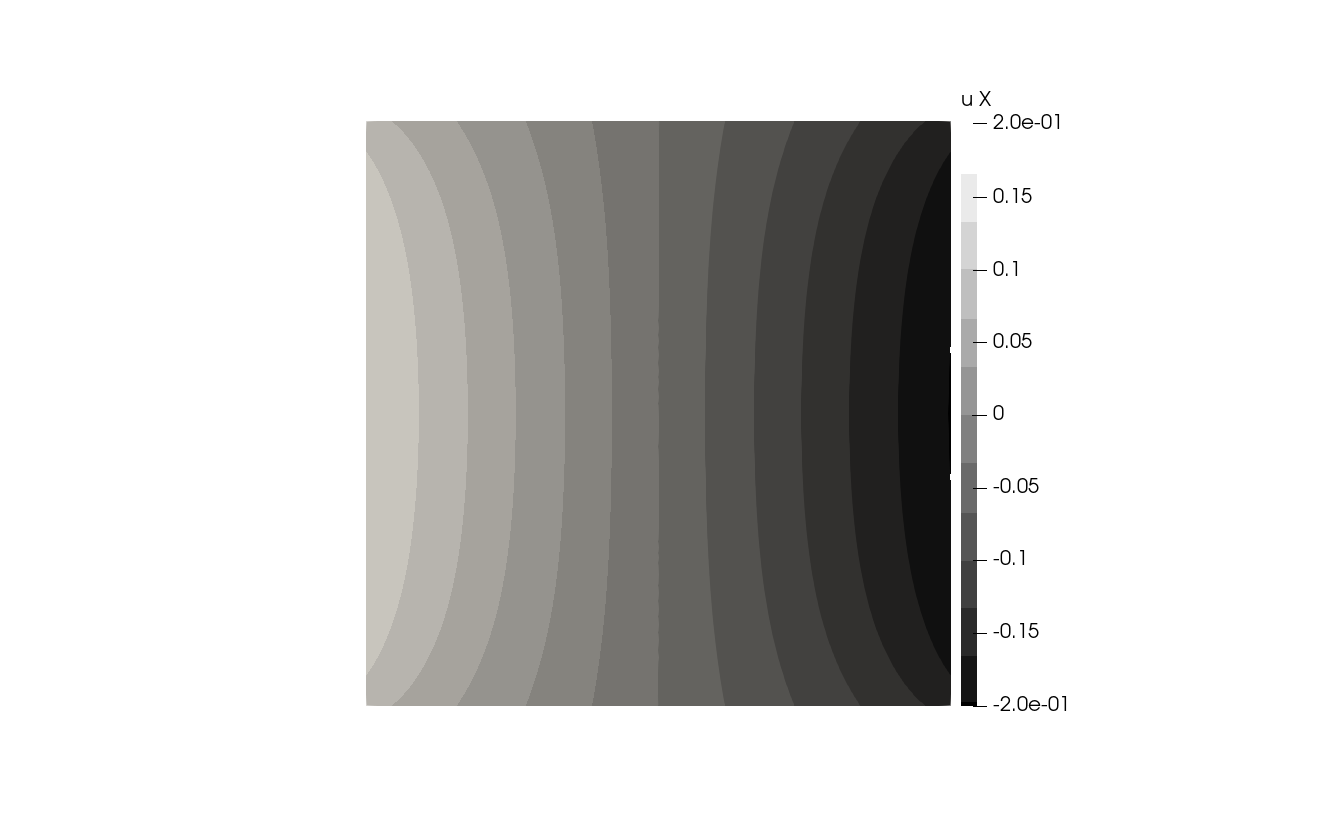}  
  \end{minipage}\hfill
  \begin{minipage}[t]{0.33\linewidth}
   \centering
   \includegraphics[trim=300 100 200 75, clip,width=\linewidth]{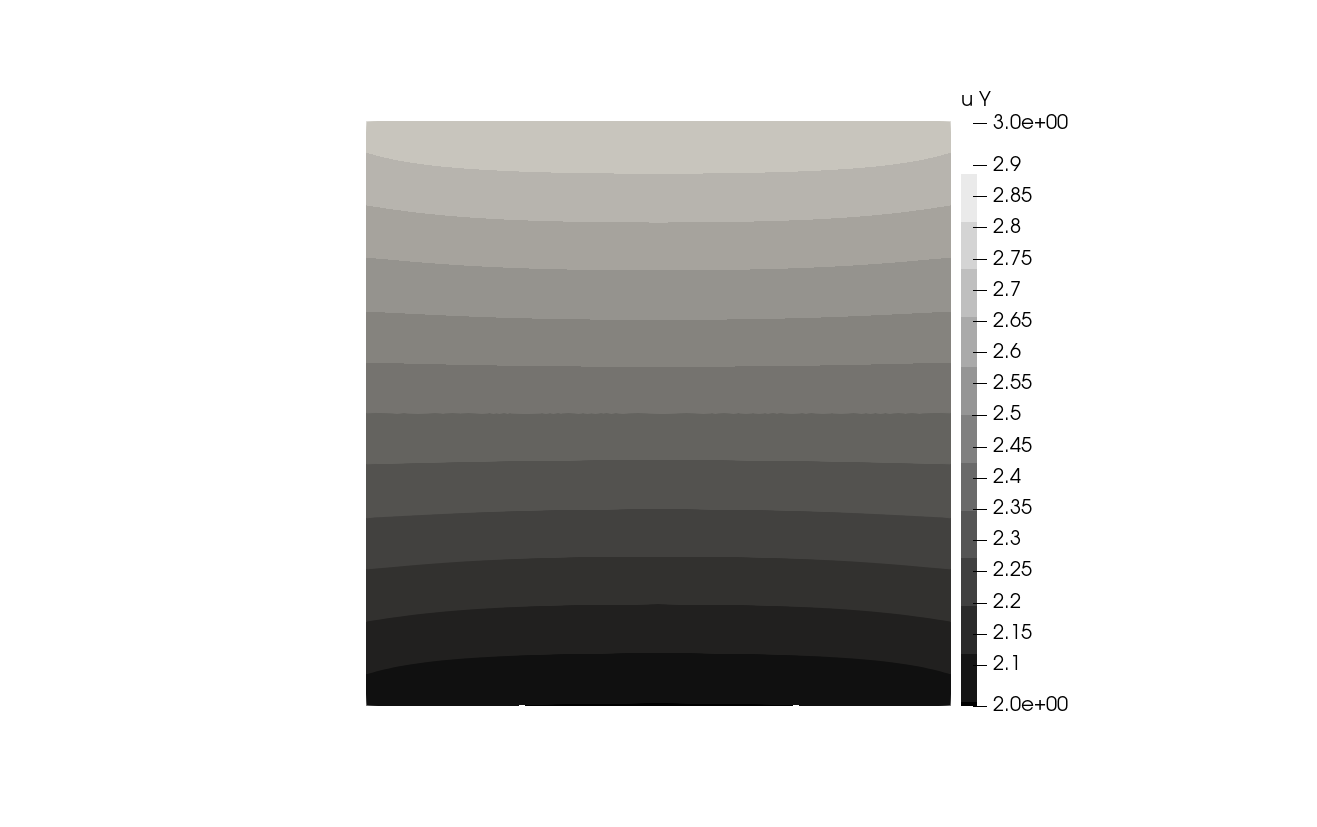}  
  \end{minipage}\hfill
  \begin{minipage}[t]{0.33\linewidth}
   \centering
   \includegraphics[trim=300 100 200 75, clip,width=\linewidth]{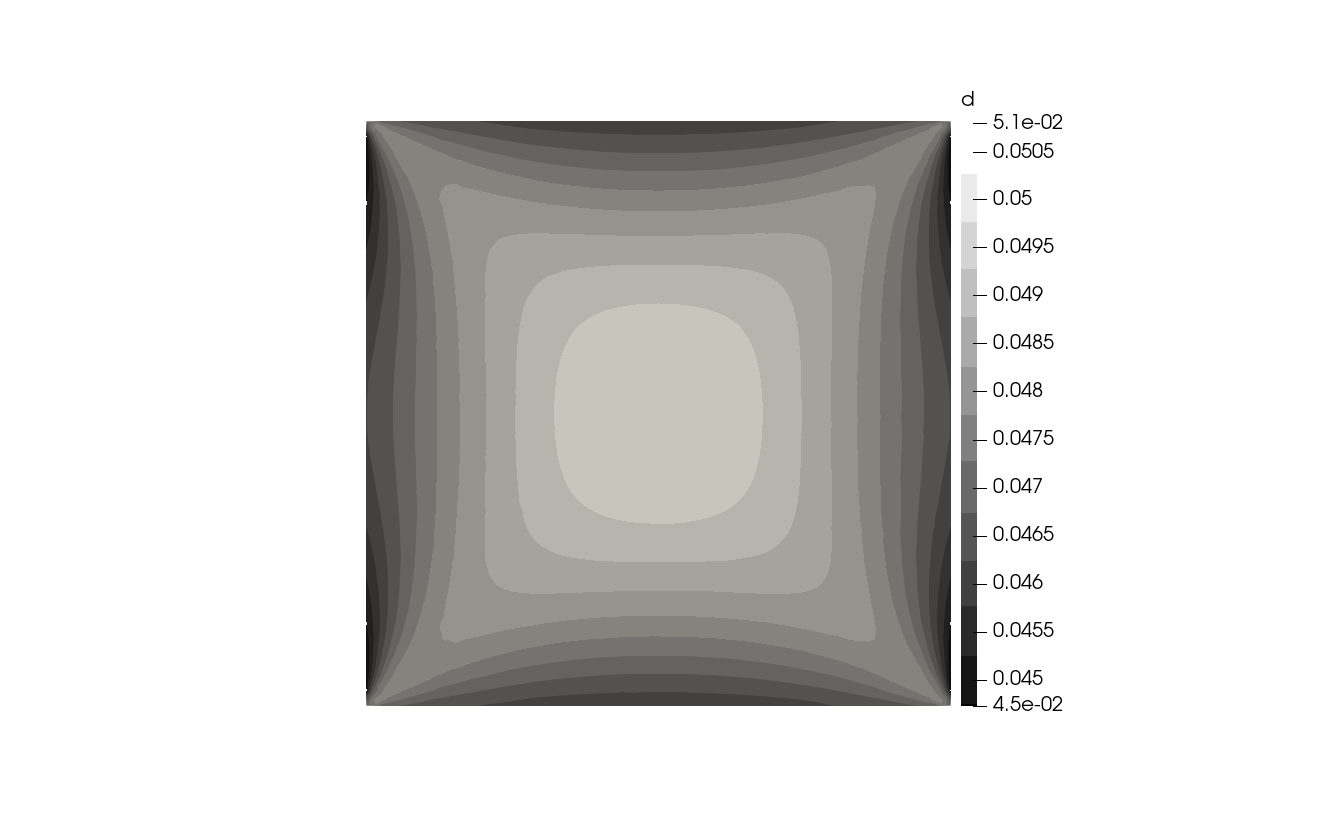}  
  \end{minipage}
  \captionof{figure}{\texttt{TC03S02}. Displacements in $X$- and $Y$- direction, damage after $10.0$ seconds.}
  \label{figure:results TC03S02}
\end{center}
Damage is also spatially distributed but within a vastly smaller range.
The stresses are relaxed a lot since particularly intersection points are not strictly bound to their position anymore.
Looking at the displacements we can see this effect of relaxing the condition on boundary displacements, too.
\subsubsection{Different domains}
We want to investigate two different settings numerically.
On the one hand, want loot at a domain with two connected components and assess the effect this has on the model response.
On the other hand, we look at the effect a smoother boundary has on the results.\\[2ex]
In Figure \ref{figure:results TC03S03} we see the results from \texttt{TC03S03}.
When only focusing on the part between the two inner boundaries, we see qualitatively a very similar picture to the Dirichlet case from before.
Note that the plots are the last time step before the maximum damage was attained.
This causes those small distortions in the displacements' distribution.
\begin{center}
  \begin{minipage}[t]{0.33\linewidth}
   \centering
   \includegraphics[trim=300 100 200 75, clip,width=\linewidth]{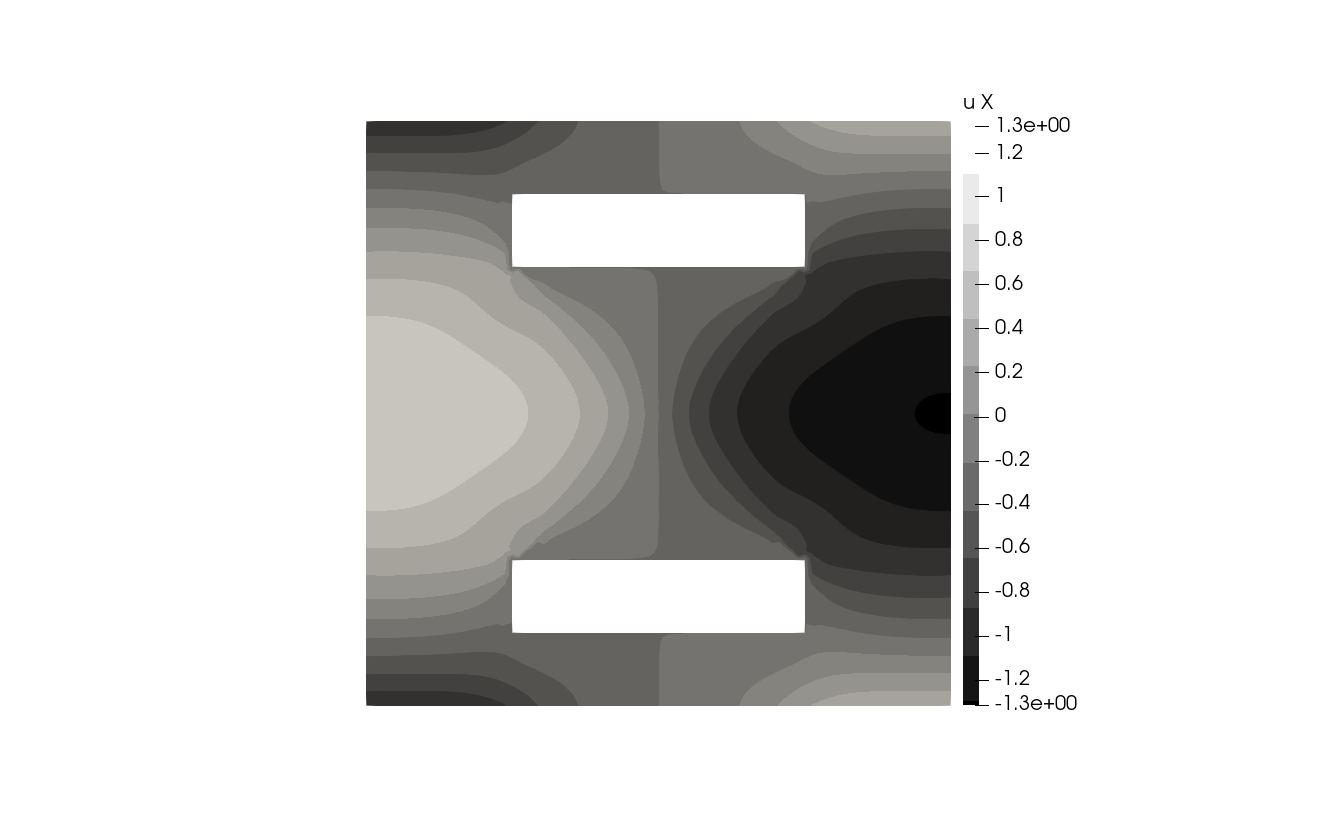}  
  \end{minipage}\hfill
  \begin{minipage}[t]{0.33\linewidth}
   \centering
   \includegraphics[trim=300 100 200 75, clip,width=\linewidth]{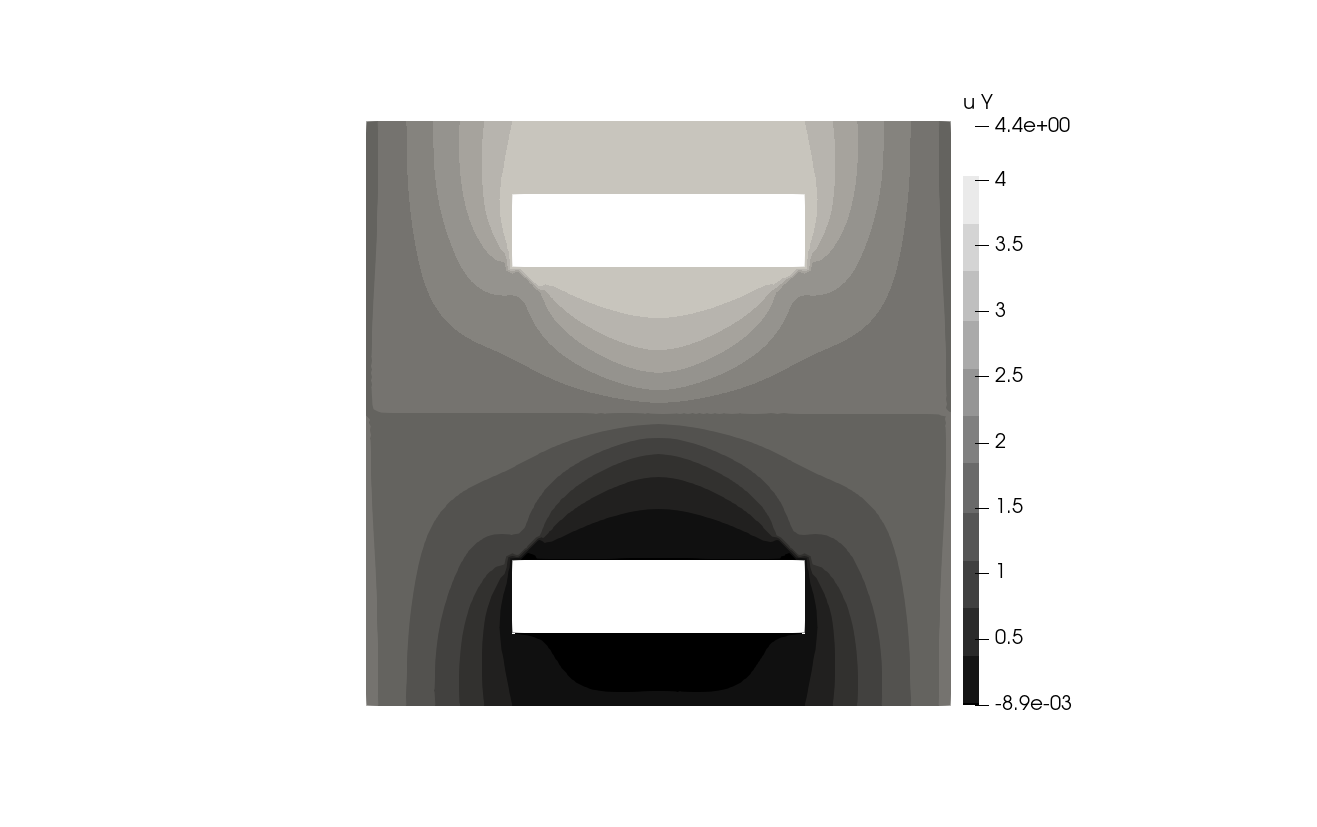}  
  \end{minipage}\hfill
  \begin{minipage}[t]{0.33\linewidth}
   \centering
   \includegraphics[trim=300 100 200 75, clip,width=\linewidth]{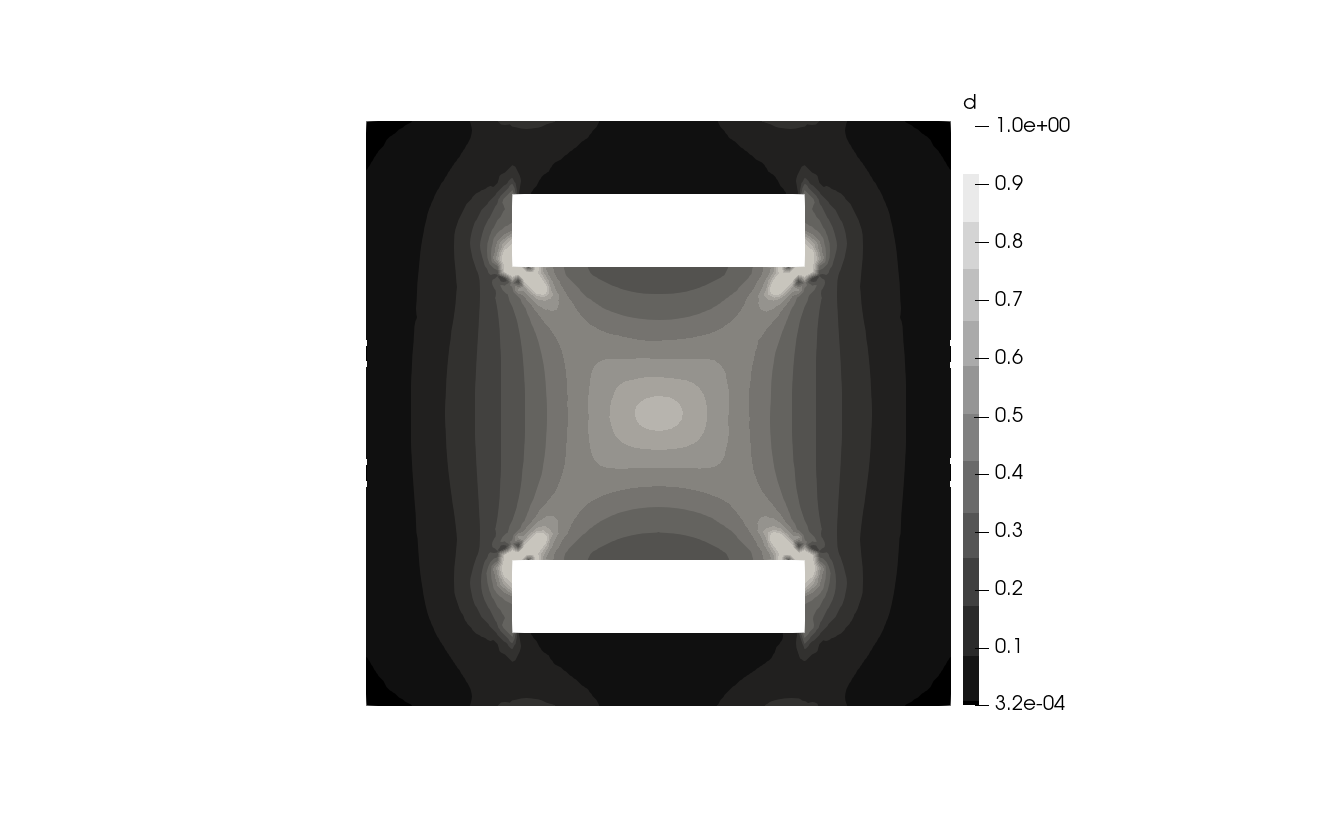}  
  \end{minipage}
  \captionof{figure}{\texttt{TC03S03}. Displacements in $X$- and $Y$- direction, damage after $8.8$ seconds.}
  \label{figure:results TC03S03}
\end{center}
In the area between the inner boundaries, we also have a very similar distribution for damage compared to the one from before.
But a major difference is that damage is not as localized as it was before.
This strongly impacts the overall response which we circle back to in the discussion part at the end.
\begin{center}
 \begin{minipage}[t]{0.33\linewidth}
  \centering
  \includegraphics[trim=300 100 200 75, clip,width=\linewidth]{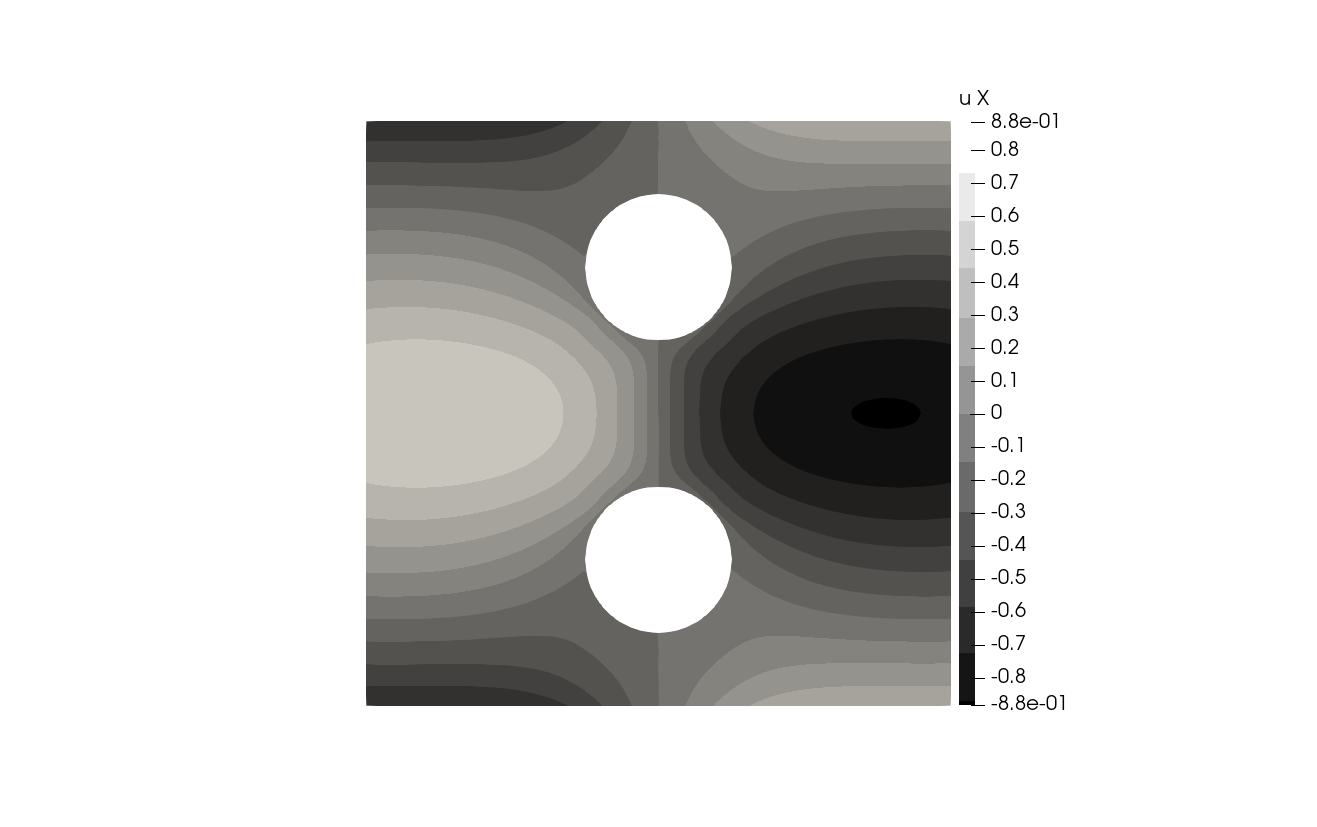}  
 \end{minipage}\hfill
 \begin{minipage}[t]{0.33\linewidth}
  \centering
  \includegraphics[trim=300 100 200 75, clip,width=\linewidth]{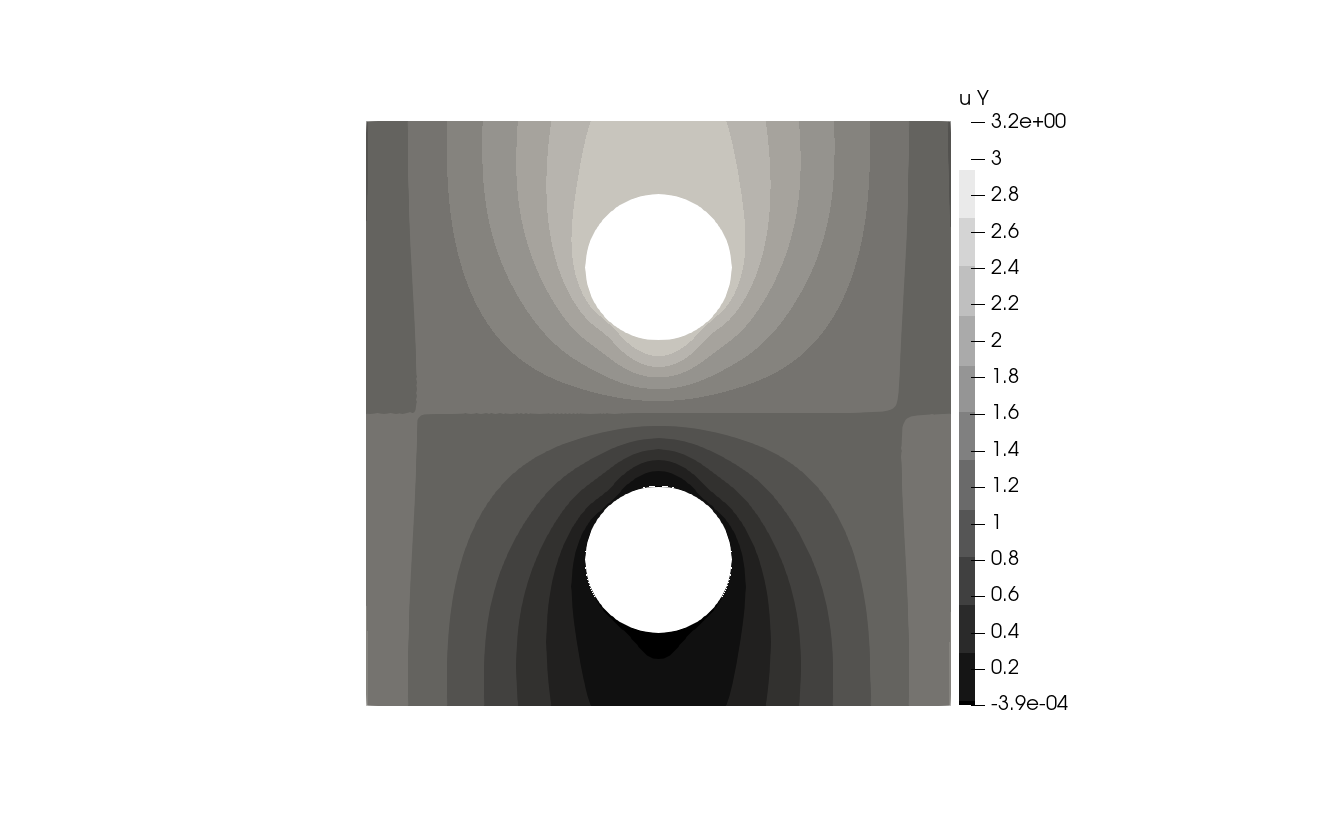}  
 \end{minipage}\hfill
 \begin{minipage}[t]{0.33\linewidth}
  \centering
  \includegraphics[trim=300 100 200 75, clip,width=\linewidth]{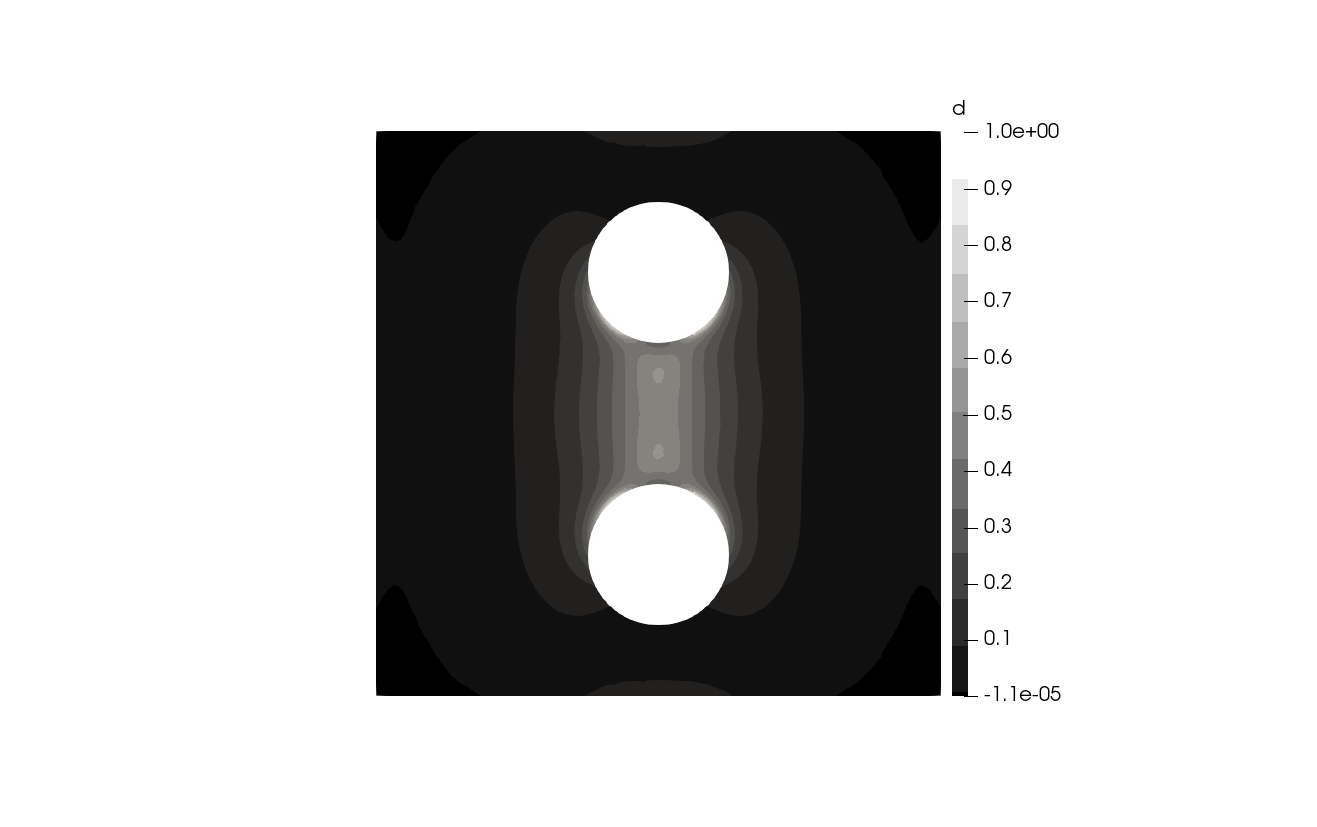}  
 \end{minipage}
 \captionof{figure}{\texttt{TC03S04}. Displacements in $X$- and $Y$- direction, damage after $6.4$ seconds.}
 \label{figure:results TC03S04}
\end{center}
Figure \ref{figure:results TC03S04} displays the same situation but with a much smoother boundary.
The effect on damage and displacements is much stronger than before.
Looking at only a thin stripe along the respective middle axis shows a similar behavior to the previous cases where we applied Dirichlet conditions.\\[2ex]
When focusing on damage, the maximum is still on or close to the boundaries, but not as strongly localized as before.
\subsubsection{Discussion}
Figure \ref{figure:comparing results TC03SUM} shows the results of all test cases from this subsection.
The plot with diamond marks depicts \texttt{TC03S01}, i.e., Dirichlet conditions on the unit square $\spatialdomain_0$.
Although we see substantial damage measured in the respective norm, the norm of the displacements is barely affected.
\begin{center}
  \begin{minipage}[t]{0.49\linewidth}
   \centering
   \includegraphics{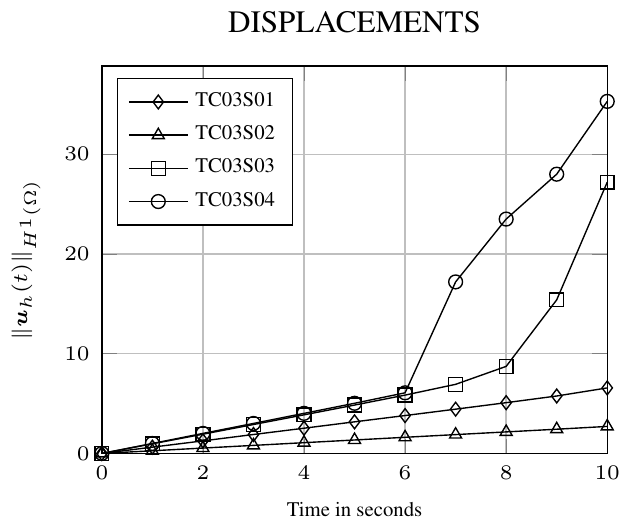}
  \end{minipage}\hfill
  \begin{minipage}[t]{0.49\linewidth}
   \centering
   \includegraphics{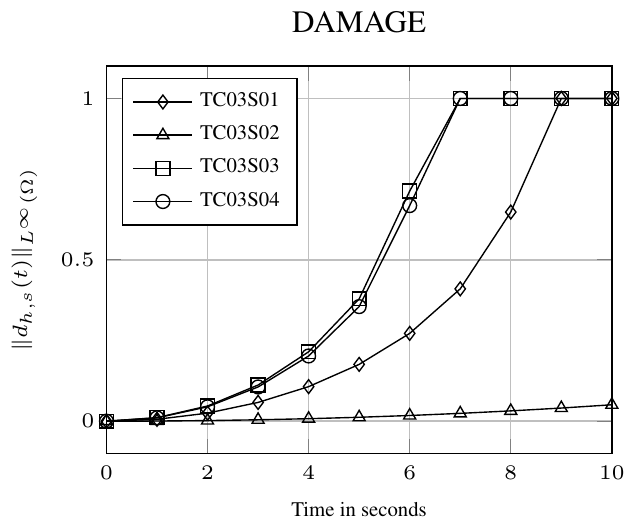}
  \end{minipage}
  \captionof{figure}{Comparing results \texttt{TC03}.}
  \label{figure:comparing results TC03SUM}
\end{center}
Test case \texttt{TC03S02}, where Robin conditions were used to relax the Dirichlet condition, shows very little damage.
This stems from the fact that the stresses were heavily reduced and thus the nonlinear effect is almost not visible.\\[2ex]
The remaining two cases behave very differently compared to the others.
For instance, damage evolution is much higher for both of them than before.
The main reasons for this are the following.
The different shapes of both boundaries for \texttt{TC03S03} and \texttt{TC03S04} and the remaining effective material between both boundaries are very different to \texttt{TC03S01} and damage distribution in this area is a lot less localized than in \texttt{TC03S01}.
Therefore high damage values are not only attained at the corners, but also in the area in between the boundaries and therefore adding to the $\integrablefunctions^{\infty}$-norm.
This also explains the higher effect on the displacements.\\[2ex]
Comparing only \texttt{TC03S03}, \texttt{TC03S04} to each other, we see that damage evolution is very similar, but norms of displacements behave variously.
In view of damage evolution, the small difference stems from the localized stresses in the corners, which are higher in \texttt{TC03S04} than in \texttt{TC03S04}.
The difference in norms of the displacements can be explained by the different shape of the damage variable's spatial distribution between the inner boundaries.
\subsection{Substantial damage}\label{sec:tc04}
In general, substantial damage means that material bonds have been fully desintegrated and a macroscopic crack is visible.
In our case, this translates to damage being $\damage = 1$ at some point in $\spatialdomain$.
\begin{center}
  \begin{minipage}[t]{0.33\linewidth}
   \centering
   \includegraphics[trim=300 100 200 75, clip,width=\linewidth]{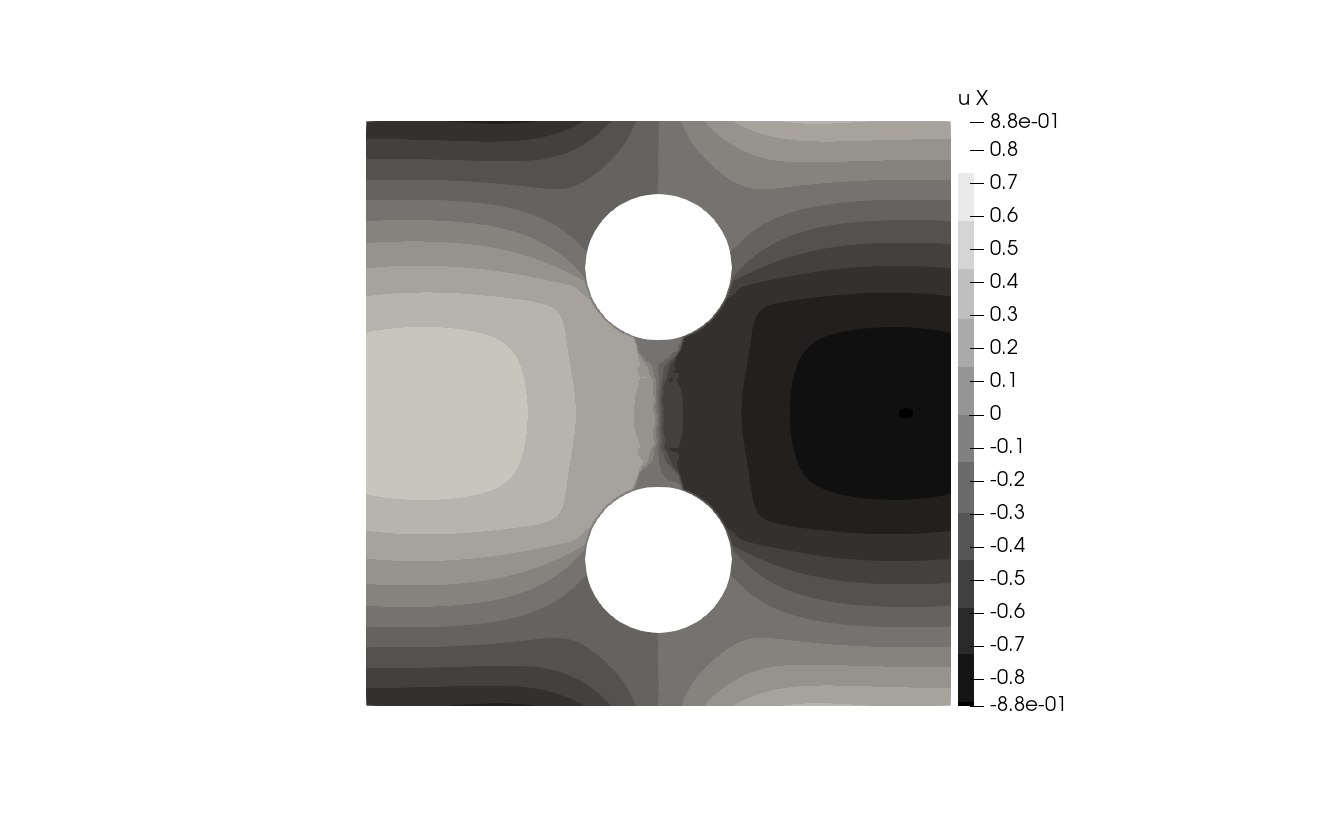}  
  \end{minipage}\hfill
  \begin{minipage}[t]{0.33\linewidth}
   \centering
   \includegraphics[trim=300 100 200 75, clip,width=\linewidth]{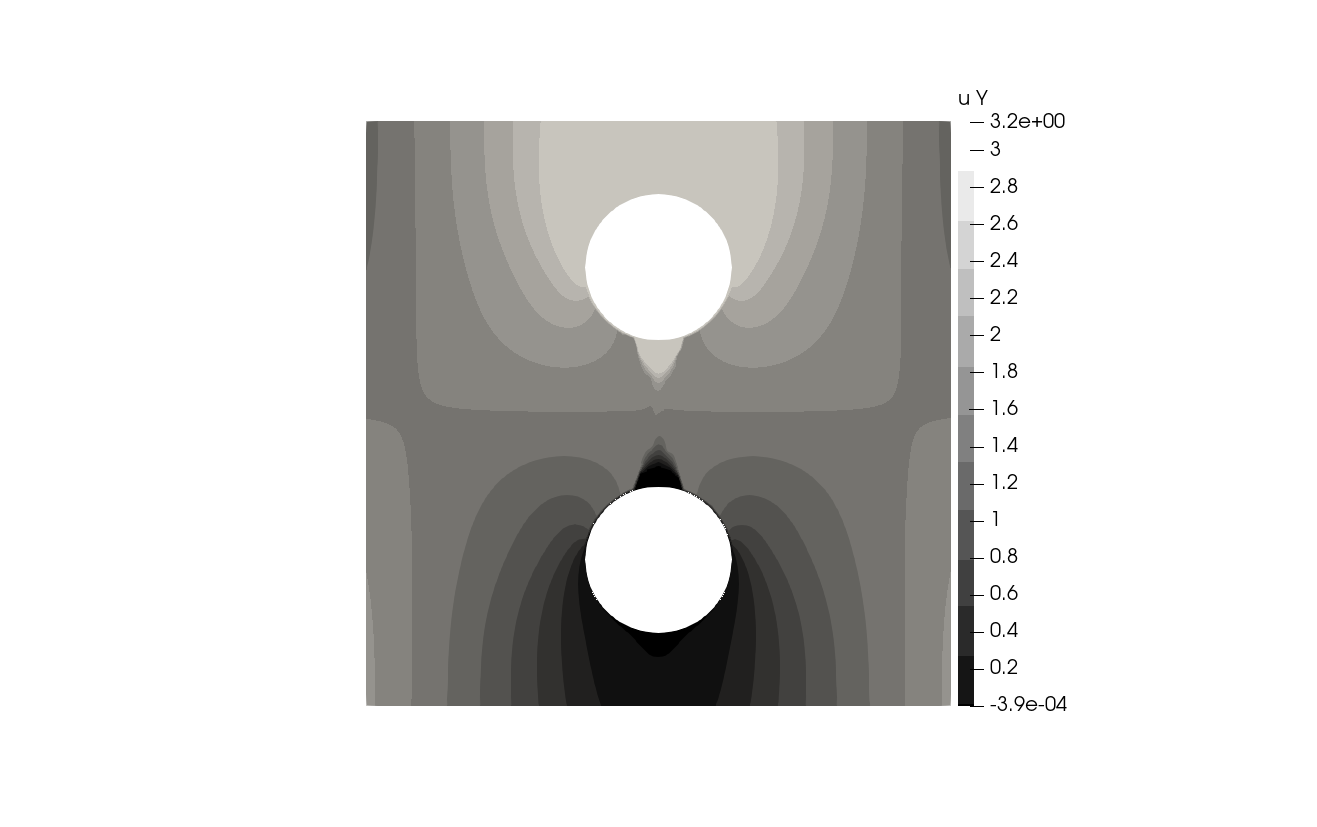}  
  \end{minipage}\hfill
  \begin{minipage}[t]{0.33\linewidth}
   \centering
   \includegraphics[trim=300 100 200 75, clip,width=\linewidth]{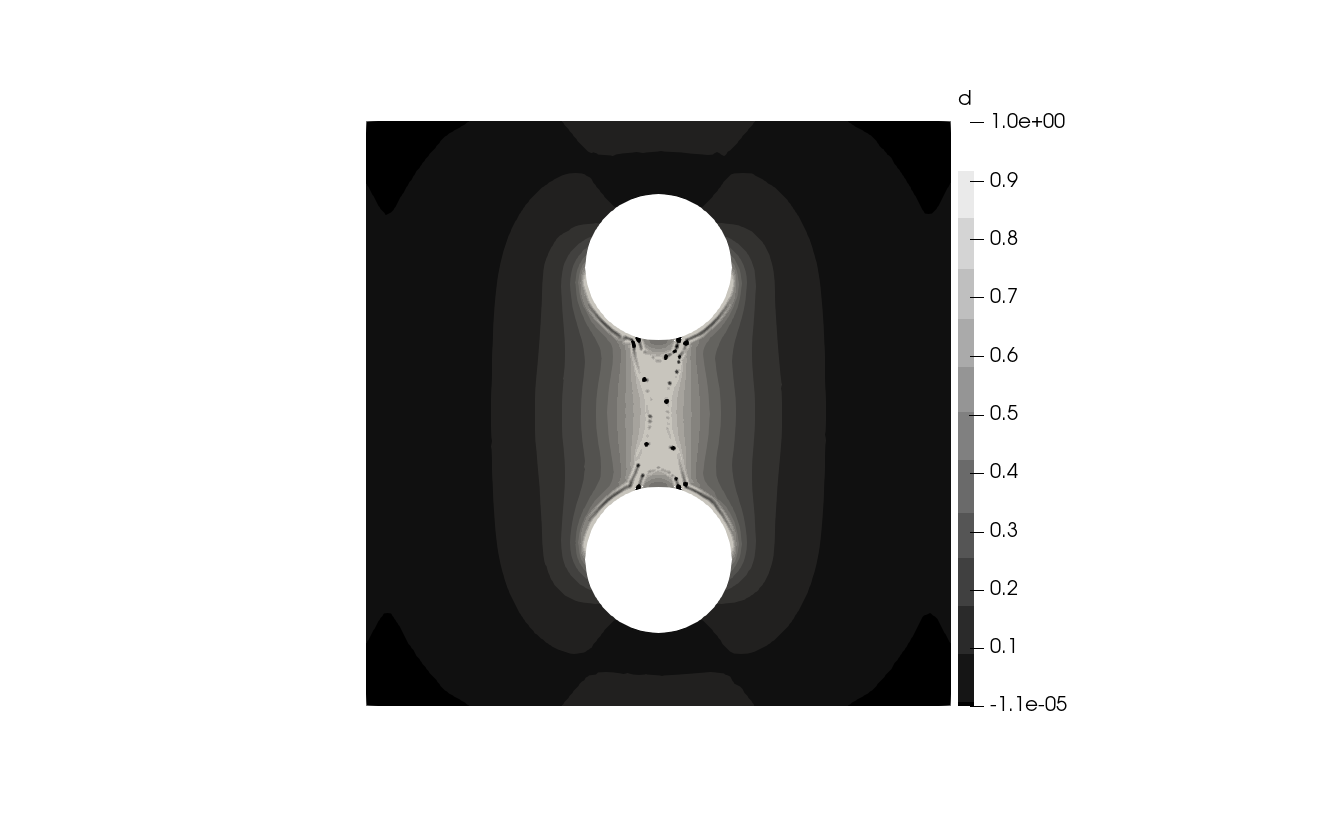}  
  \end{minipage}
  \captionof{figure}{\texttt{TC03S04}. Displacements in $X$- and $Y$- direction, damage after $7.0$ seconds.}
  \label{figure:results TC04S00}
 \end{center}
The presented model is only viable for partial damage.
If substantial damage is reached at some point in space, the coercivity of the left-hand side form no longer holds true, and hence, the equation of motion degenerates.
Looking at the model output exhibited in Figure \ref{figure:results TC04S00}, these effects can be seen in every plot, where those show very different behavior compared to the time steps prior to this (see Figure \ref{figure:results TC03S04}).

\section{Conclusion}
In this framework, we have provided a FEM discretization of the forward problem for the Kachanov damage model and have implemented it in the computational platform FEniCS.
It turns out that our numerical scheme approximates well the target weak solution of the non-linearly coupled elliptic-ode system, whose existence, uniqueness and stability of solutions with respect to parameters has been established in \cite{GM21}.
The numerical simulations recover the expected analytical results and are also within the physical range.\\[2ex]
In view of our project to reconstruct the damage process $\damageprocess\in\damageprocesses$ that governs damage evolution, we stress that Neumann-type boundary conditions should be chosen for the tensile-testing setting since the modelling with Dirichlet-type boundary conditions causes singularities in the displacements' gradient that create heavily localized damage.
In a setting where Dirichlet type boundary conditions cannot be avoided, the possibility to change the computational domain should be investigated.
It would be interesting to see how localized damage will affect the reconstruction process.
We leave this for future work. \\[2ex]
Although boundary conditions of Robin type show great promise, these cannot be applied in our setting from analytical reasons.
To relax this restriction, it might be worth investigating whether a modification of our proof of the nonlinear tangential cone condition is possible.

\section*{Acknowledgements}
The authors are indebted to Michael B\"ohm (Bremen, Germany) for initiating and supporting this research.
\addcontentsline{toc}{section}{References}
\printbibliography
\label{page:e}
\end{document}